\documentclass{amsart}
\usepackage{amsmath,amsthm,amsfonts,amssymb,graphicx,xcolor}
\usepackage[margin=1in]{geometry}
\usepackage{dsfont}
\usepackage{hyperref}

\newcommand{\one}{\mathds 1}
\DeclareMathOperator{\supp}{supp}
\DeclareMathOperator{\dist}{dist}

\newtheorem{theorem}{Theorem}[section]
\newtheorem{corollary}{Corollary}[theorem]
\newtheorem{lemma}[theorem]{Lemma}
\newtheorem{proposition}[theorem]{Proposition}

\theoremstyle{definition}
\newtheorem{definition}{Definition}[section]
\theoremstyle{remark}
\newtheorem{remark}{Remark}[section]
\numberwithin{equation}{section}

\newcommand{\abs}[1]{\left\vert#1\right\vert}
\newcommand{\nm}[1]{\left\Vert#1\right\Vert}
\newcommand{\br}[1]{\left\langle #1 \right\rangle}
\newcommand{\tnm}[1]{\left\Vert#1\right\Vert_{L^2}}
\newcommand{\lnm}[1]{\left\Vert#1\right\Vert_{L^{\infty}}}

\newcommand{\pnnm}[3]{\left\Vert#1\right\Vert_{L^{#2}_{#3}}}

\newcommand{\ud}{\mathrm{d}}
\newcommand{\p}{\partial}
\newcommand{\ls}{\lesssim}
\newcommand{\gs}{\gtrsim}
\newcommand{\rt}{\rightarrow}
\newcommand{\no}{\nonumber}
\newcommand{\ep}{\varepsilon}
\newcommand{\R}{\mathbb{R}}
\renewcommand{\S}{\mathbb{S}}
\newcommand{\vs}{v_{\ast}}
\newcommand{\li}{\mathcal{L}}
\newcommand{\fb}{f_b}
\newcommand{\fr}{f_r}
\newcommand{\Qff}{Q(f,f)}
\newcommand{\Hk}{H^{s,\gamma}_{kin}}
\newcommand{\trp}{\left(\partial_t + v \cdot \nabla_x\right)}
\newcommand{\pp}{\mathcal{P}_{\BC}}
\newcommand{\BC}{\Gamma}
\newcommand{\vn}{n}
\newcommand{\df}{\mathfrak{d}}

\begin{document}
\title[Upper bounds in a bounded domain]{Conditional $L^{\infty}$ estimates for the non-cutoff Boltzmann equation in a bounded domain}

\author{Zhimeng Ouyang and Luis Silvestre}

\thanks{Luis Silvestre is supported by NSF grants DMS-1764285 and DMS-2054888. Zhimeng Ouyang is supported by the NSF fellowship DMS-2202824.}

\address[Luis Silvestre]{Department of Mathematics, University of Chicago, Chicago, Illinois 60637, USA}
\email{luis@math.uchicago.edu}

\address[Zhimeng Ouyang]{Department of Mathematics, University of Chicago, Chicago, Illinois 60637, USA}
\email{ouyangzm9386@uchicago.edu}

\begin{abstract}
We consider weak solutions of the inhomogeneous non-cutoff Boltzmann equation in a bounded domain with any of the usual physical boundary conditions: in-flow, bounce-back, specular-reflection and diffuse-reflection. When the mass, energy and entropy densities are bounded above, and the mass density is bounded away from vacuum, we obtain an estimate of the $L^\infty$ norm of the solution depending on the macroscopic bounds on these hydrodynamic quantities only. It is a regularization effect in the sense that the initial data is not required to be bounded.
\end{abstract}
\maketitle


\section{Introduction}


We consider the spatially inhomogeneous Boltzmann equation
\begin{align}\label{eqn:boltzmann}
    \trp f=\Qff,
\end{align}
where $Q$ is the Boltzmann collision operator in the \emph{non-cutoff} case (see Subsection~\ref{SubSec:Collision-opt} for its precise formula). 

The purpose of this article is to obtain upper bounds in $L^\infty$ for the solution $f$ of \eqref{eqn:boltzmann}, in a bounded domain, that depend only on the macroscopic hydrodynamic quantities associated to $f$.

The function $f(t,x,v)$ takes values for $t \in (0,T)$, $x \in \overline \Omega$ and $v \in \R^d$ ($d\geq 2$).
The domain $\Omega \subset \R^d$ is a bounded open set with a $C^{1,1}$ boundary. We write $\BC := (0,T) \times \partial \Omega \times \R^d$, and $\BC_-$ denotes the incoming part of this boundary: $\BC_- := \{(t,x,v) \in \BC: v\cdot n(x)<0\}$ where $n(x)$ is the outward unit normal vector at $x\in\p\Omega$.

There are four common types of boundary conditions (here $\vn$ denotes the outward unit normal vector at $x\in\p\Omega$):
\begin{enumerate}
    \item 
    In-flow: $f(t,x,v)|_{\BC_-} = g(t,x,v)$ for some given function $g$;
    \item
    Bounce-back: $f(t,x,v) = f(t,x,-v)$ on $\BC$;
    \item
    Specular-reflection: $f(t,x,v) = f(t,x,\mathcal{R}_xv)$ on $\BC$, where $\mathcal{R}_xv:=v-2(v\cdot \vn)\vn$;
    \item
    Diffuse-reflection:
    $f(t,x,v)|_{\BC_-} = \pp[f](t,x,v):=c_\mu\mu(v)\int_{v'\cdot \vn>0}f(t,x,v')(v'\cdot \vn)\,\ud v'$, where $\mu(v)=e^{-\abs{v}^2}$ is the wall Maxwellian and the constant $c_\mu$ satisfies the normalization condition $c_\mu\int_{v'\cdot \vn>0}\mu(v')(v'\cdot \vn)\,\ud v'=1$.
\end{enumerate}

We work with solutions to the Boltzmann equation \eqref{eqn:boltzmann} whose macroscopic quantities are assumed to satisfy certain macroscopic bounds. More precisely, we assume that there are constants $m_0,M_0,E_0$ and $H_0$ such that
for all (or almost all) $(t,x) \in \R_+ \times \overline\Omega$ it holds that 
\begin{equation}\label{assumption:hydrodynamic}
	\begin{cases}
    0<m_0\leq\int_{\R^d}f(t,x,v)\,\ud v\leq M_0,\\
    \int_{\R^d}\abs{v}^2f(t,x,v)\,\ud v\leq E_0,\\
    \int_{\R^d}f(t,x,v)\log f(t,x,v)\,\ud v\leq H_0. 
    \end{cases}
\end{equation}
Needless to say, the inequalities \eqref{assumption:hydrodynamic} are currently unprovable for general solutions. Currently, there is arguably no reason to suspect that they hold for all solutions. The hydrodynamic bounds \eqref{assumption:hydrodynamic} are a way to say that we look at solutions that do not have implosion singularities, and by ruling them out, we argue that no other kind of singularities can exist. Our main goal would be to determine if the hydrodynamic bounds \eqref{assumption:hydrodynamic} imply the smoothness of solutions to the Boltzmann equation \eqref{eqn:boltzmann} in a bounded domain, for all possible physical boundary conditions.

\subsection{Boltzmann Collision Operator} \label{SubSec:Collision-opt}

The Boltzmann collision operator takes the form
\begin{align} \label{Boltzmann-opt}
\Qff(v)=\int_{\R^d}\int_{\S^{d-1}}B(v-\vs,\sigma)\left(f(\vs')f(v')-f(\vs)f(v)\right)\ud\sigma\ud \vs.
\end{align}
where $v'$ and $v'_\ast$ are
\begin{align*}
v'=\frac{v+\vs}{2}+\frac{\abs{v-\vs}}{2}\sigma,\qquad \vs'=\frac{v+\vs}{2}-\frac{\abs{v-\vs}}{2}\sigma.
\end{align*}
The collision kernel $B(v-\vs,\sigma)=B\left(\abs{v-\vs},\cos\theta\right)$ only depends on $\abs{v-\vs}$ and the angle $\theta=\arccos\left(\frac{v-\vs}{\abs{v-\vs}}\cdot\sigma\right)\in\left[-\pi,\pi\right]$. 

We consider standard non-cutoff collision kernels satisfying the bounds
\begin{align} \label{B-kernel}
B\left(\abs{v-\vs},\cos\theta\right)\approx \abs{v-\vs}^{\gamma}b(\cos\theta) ,
\end{align}
where $b$ is a nonnegative even function such that as $\theta\rt0$,
\begin{align} \label{b-kernel}
b(\cos\theta)\approx |\theta|^{-d+1-2s}.
\end{align}
Note that $b(\cos\theta)$ is not integrable on $\sigma \in S^{d-1}$ near $\theta=0$. Typically, this is called the non-cutoff model.

Depending on the values of $\gamma$, it is customary to use the following terminology:
\begin{itemize}
\item
Hard potentials: $\gamma > 0$.
\item
Maxwell molecules: $\gamma=0$.
\item
Moderately soft potential: $\gamma<0$ and $2s+\gamma \geq 0$.
\item
Very soft potential: $2s+\gamma<0$.
\end{itemize}
The methods in this paper work for hard or moderately soft potential. Our proof fails in the very soft potential case.

\subsection{Main Result}

The following is our main result.

\begin{theorem} \label{thm:main} 
Let $f(t,x,v)$ be a (weak) solution of the Boltzmann equation \eqref{eqn:boltzmann} for $(t,x,v)\in(0,T)\times\Omega\times\R^d$ with any of the four boundary conditions (in-flow, bounce-back, specular-reflection and diffuse-reflection). 
We consider the non-cutoff collision kernel with parameters $\gamma$ and $s$ such that 
$0<s<1$, $-d<\gamma \leq 2$, and $\gamma+2s>0$ (which covers the hard potential and moderately soft potential cases). Assume in addition that \eqref{assumption:hydrodynamic} holds for every $(t,x) \in (0,T) \times \overline\Omega$. Then we have
\[f(t,x,v)\leq a(t),\]
almost everywhere, where
\[a(t) = C \left(1+t^{-\frac{d}{2s}}\right).\]
For the bounce-back, specular-reflection and diffuse-reflection boundary, $C$ depends on the parameters $m_0$, $M_0$, $E_0$ and $H_0$ in \eqref{assumption:hydrodynamic} and dimension $d$. 
For the in-flow boundary, besides the above $(m_0,M_0,E_0,H_0,d)$ dependence, $C$ also depends on the boundary data $g$.
\end{theorem}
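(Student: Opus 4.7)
The plan is to extend the barrier / maximum-principle argument of Imbert-Silvestre from the whole space to a bounded domain by designing barriers that are compatible with each of the four physical boundary conditions. I would look for a barrier of the form $\Phi(t,v)=a(t)\psi(v)$, independent of $x$, with $a(t)=C(1+t^{-d/(2s)})$ and $\psi$ a strictly positive radial function bounded above by $1$ and with enough decay at infinity to control the diffuse-reflection operator. Since $a(t)\to\infty$ as $t\to 0^+$ and $\psi>0$, the bound $f\leq\Phi$ holds trivially near $t=0$, and since $\psi\leq 1$, once $f\leq\Phi$ is established throughout, the theorem's conclusion $f\leq a(t)$ follows.

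The choice of $\psi$ is dictated by the boundary conditions. Radial symmetry gives $\Phi(t,-v)=\Phi(t,\mathcal{R}_xv)=\Phi(t,v)$, so bounce-back and specular reflection are trivially consistent with $\Phi$. For diffuse reflection I would require
\[
c_\mu\mu(v)\int_{v'\cdot \vn>0}\psi(v')(v'\cdot \vn)\,\ud v'\leq \psi(v)\quad\text{for }v\cdot \vn<0,
\]
which, combined with the integrability of $\psi$ against the incoming flux, essentially forces $\psi$ to be comparable to $\mu$ at infinity (since $\mu$ is the unique fixed profile of $\pp$ up to scaling). I would take, e.g., $\psi(v)=(1-\ep)\mu(v)+\ep\chi(v)$ for a small $\ep>0$ and a radial Maxwellian-majorized bump $\chi$ chosen so the inequality becomes strict. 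For in-flow I would enlarge $C$ so that $\Phi\geq g$ on $\BC_-$, which is the sole source of the $g$-dependence in that case.

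The argument then proceeds by contradiction. Suppose $f>\Phi$ somewhere, and let $(t_0,x_0,v_0)$ be an earliest contact point, extracted via a suitable regularization of $f$. If $x_0\in\p\Omega$ with $v_0\cdot \vn(x_0)<0$, then the boundary compatibility just established, together with $f<\Phi$ strictly at the outgoing velocities (whose values arose from earlier characteristic histories in the interior), rules out a contact on $\BC_-$. We may therefore assume the backward characteristic through $(t_0,x_0,v_0)$ enters the interior of $\Omega$, along which $f-\Phi\leq 0$ with equality at the endpoint, yielding
\[
a'(t_0)\psi(v_0)=\p_t\Phi(t_0,v_0)\leq \trp f(t_0,x_0,v_0)=\Qff(t_0,x_0,v_0).
\]
On the other hand, the hydrodynamic bounds \eqref{assumption:hydrodynamic} together with $f\leq\Phi\leq a(t_0)$ imply, via the pointwise coercivity estimate of Imbert-Silvestre type,
\[
\Qff(t_0,x_0,v_0)\ls -c_0\,f(t_0,x_0,v_0)^{1+2s/d}\br{v_0}^{\gamma}+C_1\,f(t_0,x_0,v_0)\br{v_0}^{\gamma+2s}.
\]
Substituting $f(t_0,x_0,v_0)=a(t_0)\psi(v_0)$ and using the energy bound $E_0$ together with $\gamma+2s>0$ to confine $|v_0|$ to a bounded range (so that $\br{v_0}^{\gamma}$ is bounded below), these combine into a scalar ODE inequality of the form $a'(t_0)\ls -c\,a(t_0)^{1+2s/d}+C\,a(t_0)$, which is violated by $a(t)=C(1+t^{-d/(2s)})$ once $C$ is taken sufficiently large.

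The main obstacle is the careful boundary analysis for diffuse reflection: since $\mu$ is an eigenfunction of $\pp$ with eigenvalue $1$, the compatibility inequality for $\psi$ can hold only marginally, and one must arrange the first-contact extraction so that an equality on $\BC_-$ reduces to a contact at an outgoing boundary velocity which can then be handled by the interior coercivity argument. Secondary technical issues include the regularization procedure needed to extract a first contact from a weak solution, the handling of grazing velocities $v\cdot \vn=0$ as a limiting case, and the justification that the pointwise coercivity estimate extends up to contact points on $\p\Omega$ via approximation along backward characteristics entering the interior.
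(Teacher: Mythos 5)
Your proposal follows the pointwise barrier/first-crossing strategy of \cite{silvestre2016}, which is precisely the route the paper deliberately abandons: the authors state that the computation at a hypothetical first contact point does not adapt easily when that point lies on the spatial boundary, and they instead run a variational energy argument (chain rule for weak solutions in $\Hk$, trace theory on $\BC$, monotonicity of $m(t)=\iint (f-a(t))_+^2$). Beyond being a different route, your sketch has two genuine gaps. First, the contact-point extraction is not available at the stated level of generality: the theorem is for weak solutions in $L^2_{t,x}N^{s,\gamma}_v$, for which $\trp f$ and $\Qff$ are only distributions/elements of $N^{-s,\gamma}$, so the inequality $\p_t\Phi(t_0,v_0)\leq\trp f(t_0,x_0,v_0)=\Qff(t_0,x_0,v_0)$ has no pointwise meaning; ``a suitable regularization of $f$'' does not fix this, because mollification neither commutes with the nonlinear operator $Q(f,f)$ nor preserves the boundary conditions, and controlling the resulting error terms up to $\p\Omega$ is exactly the unresolved difficulty. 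Moreover, your exclusion of contact on $\BC_-$ is not justified: for bounce-back/specular reflection, equality at an incoming $(x_0,v_0)$ only transfers, via the boundary condition, to equality at the reflected outgoing velocity (your claim that $f<\Phi$ \emph{strictly} at outgoing velocities has no basis, since at time $t_0$ one only knows $f\leq\Phi$), and the grazing set $v\cdot\vn=0$, where characteristics neither enter nor leave, is left untreated.

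Second, the barrier shape itself breaks the interior step. The diffuse-reflection compatibility $\pp[\psi]\leq\psi$ on $\BC_-$ pushes you to $\psi\approx\mu$ (Gaussian) at infinity, but then the contact-point inequality cannot be closed uniformly in $v_0$: substituting $f(t_0,x_0,v_0)=a(t_0)\psi(v_0)$ into your quoted bound, the good term $-c\,(a\psi(v_0))^{1+2s/d}\br{v_0}^{\gamma}$ is exponentially small compared with the bad term $C\,a\psi(v_0)\br{v_0}^{\gamma+2s}$ for large $\abs{v_0}$, so no choice of $C$ in $a(t)$ yields a contradiction there. Your attempt to ``confine $\abs{v_0}$ to a bounded range'' using $E_0$ is not legitimate: the energy bound constrains averages of $f$, not the location of a first contact point, and obtaining any self-generated decay in $v$ under only \eqref{assumption:hydrodynamic} is a separate, substantial result (and is known only with polynomial, not Gaussian, rates). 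Note also that the elaborate fixed-point analysis of $\pp$ is unnecessary: as the paper observes in Section 1.1.1, the hydrodynamic bounds give $\pp[f]\leq c_\mu\mu(v)\sqrt{M_0E_0}$, so diffuse reflection is simply a special case of in-flow with bounded incoming data, which is how Theorem \ref{thm:main} treats it.
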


The program of conditional regularity has been successfully carried out for solutions to the inhomogeneous Boltzmann equation which are periodic in space (essentially removing any potential spatial boundary effect). See \cite{silvestre2016,imbert-silvestre-whi2020,imbert-mouhot-silvestre-decay2020,imbert-mouhot-silvestre-lowerbound2020,imbert-silvestre-schauder,HST-masspushing,imbert-silvestre-2022,cameron-snelson}, and the survey \cite{imbert-silvestre-survey2020}. It has also been carried out for the inhomogeneous Landau equation without spatial boundary in \cite{cameron-silvestre-snelson-2018,golse-imbert-mouhot-vasseur-2019,hendersson-snelson-2020,henderson-snelson-tarfulea-2019,snelson-2020}. While the focus of this topic is on solutions that are far from equillibrium, an incidental consequence is the global existence of smooth solutions for initial data that is close to a Maxwellian with respect to any norm for which local-in-time well posedness holds \cite{silvestre-snelson-2023}.

The upper bound provided by the main result of this paper represents a first step toward a conditional regularity program for the Boltzmann equation in a bounded domain. Recent results for second order kinetic equations in bounded domains \cite{silvestre2022,zhu2022regularity} suggest that we should expect conditional regularity estimates to hold up to the boundary at least up to the H\"older continuity of $f$. The possibility of higher regularity estimates up to the boundary requires further investigation.

For the equation without boundary, the conditional upper bounds similar to Theorem \ref{thm:main} are obtained in \cite[Theorem 1.2]{silvestre2016}. The proof there is based on a nonlocal quantitative maximum principle, following an idea similar to those used for H\"older continuity of nonvariational parabolic integro-differential equations as in \cite{silvestre-HJ,silvestre-icm2014}. It relies on a pointwise estimate of the equation at the hypothetical first crossing point between the intended upper bound and the solution. There are some technical difficulties to carry out this computation when the first crossing point is on the boundary, and because of that it does not adapt to the setting of this paper easily. We opt for a different, more variational, approach, arguably similar to other techniques developed originally for parabolic integro-differential equations in \cite{caffarelli2011}. As a byproduct of this approach, the result applies to a more general class of weak solutions. In that sense, Theorem \ref{thm:main} provides a small improvement over \cite[Theorem 1.2]{silvestre2016}, even for the case without boundary, that can make the result easier to apply. The notion of solution that we use in this paper is descibed in Section \ref{Sec:weak-sol}, and it is a more or less classical weak solution in Sobolev spaces. It is not as weak as the renormalized solutions with defect measure constructed in \cite{alexandre-villani}, but it is certainly weaker than the classical solutions considered in \cite{silvestre2016}.

It is worth noting that while we make some technical assumption on the solution (belonging to certain Sobolev space) and on the domain (bounded and with a $C^{1,1}$ boundary), these are qualitative conditions simply to be able to make sense of all the steps in the proof. Our final estimate does not depend in any way to any quantity related to these technical assumptions.

\subsubsection{On the Diffuse-Reflection Boundary Condition} \label{s:diffuse}

The diffuse-reflection boundary condition can be seen as a particular case of the in-flow boundary condition. Instead of having an arbitrary value for $f$ on $\BC_-$, its value is related to the values of $f$ on $\BC_+$ by some averaging procedure.

The hydrodynamic assumptions \eqref{assumption:hydrodynamic} trivially imply an upper bound for $f_{\BC_-} = \pp[f](t,x,v)$ in the case of diffuse-reflection boundary. Indeed,
\[ \abs{ \int_{v'\cdot \vn>0}f(t,x,v')(v'\cdot \vn)\,\ud v' } \leq \int_{\R^d }f(t,x,v') |v'| \,\ud v' \leq \sqrt{M_0 E_0}.\]
Therefore, Theorem \ref{thm:main} for the diffuse-reflection boundary is a corollary of the same theorem for the in-flow boundary condition.

After this observation, we will not make any further reference to the diffuse-reflection boundary condition in the rest of the paper, except for Remark \ref{r:diffuse} about the notion of weak solutions.

\subsection*{Acknowledgements}

The authors would like to thank Lei Wu for many helpful discussions.

\section{Preliminaries}

\subsection{Notation and Convention}

Throughout this paper, $C$ will generally denote a universal constant which may depend on $d,m_0,M_0,E_0,H_0$.
We write $A \lesssim B$ whenever $A\leq CB$ for some universal constant $C>0$; we will use $\gtrsim$ and $\approx$ in a similar standard way.

We split $\BC= (0,T)\times\partial\Omega\times\R^d$ into the outgoing boundary $\BC_+$, the incoming boundary $\BC_-$, and the singular boundary $\BC_0$ (i.e., the ``grazing set''):
\begin{equation*}
\begin{split}
\BC_+ &:= \left\{(t,x,v)\in\BC: v\cdot \vn>0\right\} ,\\
\BC_- &:= \left\{(t,x,v)\in\BC: v\cdot \vn<0\right\} ,\\
\BC_0 &:= \left\{(t,x,v)\in\BC: v\cdot \vn=0\right\} .
\end{split}
\end{equation*}
Sometimes we use $\ud\BC$ to denote $\ud v\ud S_x\ud t$, where $\ud S_x$ is the differential of surface for $x\in\Omega$.
 
We use $L_n^p$ to denote the weighted $L^p$ norm
\[ \nm{f}_{L_n^p(\R^d)}:=\left(\int_{\R^d}\br{v}^{np}\abs{f(v)}^p\,\ud v\right)^{\frac{1}{p}}. \]
When $n=0$, it reduces to the usual $L^p$ norm.

\subsection{Weighted Kinetic Sobolev Space} \label{Subsec:kinetic-space}

In order to make sense of the hydrodynamic bounds \eqref{assumption:hydrodynamic}, we naturally need to start with a solution $f$ that belongs to the space $L^\infty((0,T)\times \Omega, L^1_2(\R^d)) \cap L^\infty((0,T)\times \Omega, L\log L(\R^d))$. In order to define the notion of weak solution for which our result applies, we impose some further technical condition in terms of a kinetic Sobolev space.

Following \cite{gressmanstrain2011}, we define the weighted (anisotropic) Sobolev space $N^{s,\gamma}$ as the space of functions $f : \R^d \to \R$ for which the following norm is bounded.
\begin{align} \label{eq:normNsg}
    \nm{f}_{N^{s,\gamma}}^2:=\int_{\R^d}\br{v}^{\gamma+2s}\abs{f(v)}^2\ud v+\iint_{\R^d\times\R^d}\br{v}^{\frac{\gamma+2s+1}{2}}\br{v'}^{\frac{\gamma+2s+1}{2}}\frac{\abs{f(v)-f(v')}^2}{\df(v,v')^{d+2s}}\mathds{1}_{\df(v,v')\leq 1}\,\ud v'\ud v,
\end{align}
where 
\[
    \df(v,v'):=\sqrt{\abs{v-v'}^2+\tfrac{1}{4}\left(\abs{v}^2-\abs{v'}^2\right)^2}.
\]
The $N^{s,\gamma}$ norm is induced by the inner product
\begin{align*}
    \br{f,g}_{N^{s,\gamma}}:=&\int_{\R^d}\br{v}^{\gamma+2s}f(v)g(v)\,\ud v\\
    &+\iint_{\R^d\times\R^d}\br{v}^{\frac{\gamma+2s+1}{2}}\br{v'}^{\frac{\gamma+2s+1}{2}}\frac{\left(f(v)-f(v')\right)\left(g(v)-g(v')\right)}{\df(v,v')^{d+2s}}\one_{\df(v,v')\leq 1}\,\ud v'\ud v.
\end{align*}
Denote $N^{-s,\gamma}$ as the dual of $N^{s,\gamma}$ equipped with the norm
\[ \nm{g}_{N^{-s,\gamma}}:=\sup_{\nm{\varphi}_{N^{s,\gamma}}=1}\int_{\R^d} g(v) \varphi(v) \,\ud v. \]

The space $N^{s,\gamma}$ should be understood as a weighted version of $H^s(\R^d)$, with a weight that accounts for the precise behavior of the collision operator $\Qff$ as $|v| \to \infty$. Likewise, the space $N^{-s,\gamma}$ is a weighted version of the space $H^{-s}(\R^d)$. 

\begin{lemma}\label{lem:negative-norm}
    Consider the following integro-differential operator
    \[ \mathcal{L}f(v):=\br{v}^{\gamma+2s}f(v)+\int_{\R^d}\br{v}^{\frac{\gamma+2s+1}{2}}\br{v'}^{\frac{\gamma+2s+1}{2}}\frac{2\left(f(v)-f(v')\right)}{\df(v,v')^{d+2s}}\mathds{1}_{\df(v,v')\leq 1}\,\ud v'. \]
    Then $\mathcal L$ maps $N^{s,\gamma}$ into $N^{-s,\gamma}$. Moreover, for every $g \in N^{-s,\gamma}$, there exists $f\in N^{s,\gamma}$ such that $g = \mathcal L f$.
\end{lemma}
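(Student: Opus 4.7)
The plan is to recognize $\mathcal{L}$ as the canonical operator associated with the quadratic form that defines $\nm{\cdot}_{N^{s,\gamma}}$, and then deduce the lemma from the Riesz representation theorem applied to the Hilbert space $N^{s,\gamma}$.

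First I would verify the key identity
\[
\int_{\R^d} \mathcal{L}f(v)\,\varphi(v)\,\ud v = \br{f,\varphi}_{N^{s,\gamma}}
\]
for all $f,\varphi \in N^{s,\gamma}$. The first term of $\mathcal{L}f$ paired against $\varphi$ reproduces the weighted $L^2$ part of the inner product. For the nonlocal term, I would use the symmetry $\df(v,v')=\df(v',v)$ and the symmetry of the weight to swap the roles of $v$ and $v'$ and average, turning the factor $2(f(v)-f(v'))\varphi(v)$ into $(f(v)-f(v'))(\varphi(v)-\varphi(v'))$, which is exactly the nonlocal part of $\br{f,\varphi}_{N^{s,\gamma}}$. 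The only mild technical point is justifying Fubini on the double integral, which is fine once one checks by Cauchy--Schwarz on the measure $\br{v}^{\frac{\gamma+2s+1}{2}}\br{v'}^{\frac{\gamma+2s+1}{2}}\df(v,v')^{-d-2s}\one_{\df\le 1}\ud v\ud v'$ that the integrand is absolutely integrable for $f,\varphi\in N^{s,\gamma}$.

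From this identity the first claim is immediate: Cauchy--Schwarz for the $N^{s,\gamma}$ inner product gives
\[
\left| \int_{\R^d} \mathcal{L}f(v)\,\varphi(v)\,\ud v \right|
= \left| \br{f,\varphi}_{N^{s,\gamma}}\right|
\le \nm{f}_{N^{s,\gamma}} \nm{\varphi}_{N^{s,\gamma}},
\]
so $\mathcal{L}f$ defines a bounded linear functional on $N^{s,\gamma}$ with $\nm{\mathcal{L}f}_{N^{-s,\gamma}}\le \nm{f}_{N^{s,\gamma}}$. For the surjectivity, I would note that $(N^{s,\gamma},\br{\cdot,\cdot}_{N^{s,\gamma}})$ is a Hilbert space (the form is positive definite, and completeness follows from a standard Fatou argument on the defining double integral). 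Given $g\in N^{-s,\gamma}$, the map $\varphi\mapsto \int g\varphi\,\ud v$ is a bounded linear functional on $N^{s,\gamma}$, so the Riesz representation theorem produces $f\in N^{s,\gamma}$ with $\br{f,\varphi}_{N^{s,\gamma}} = \int g\varphi\,\ud v$ for every $\varphi$. Combining with the identity above yields $\int \mathcal{L}f\,\varphi\,\ud v = \int g\varphi\,\ud v$ for all $\varphi\in N^{s,\gamma}$, i.e.\ $\mathcal{L}f=g$ as elements of $N^{-s,\gamma}$.

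The main (and really only) obstacle I expect is the bookkeeping for the symmetrization step: one must confirm that the symbols $\br{v}^{\frac{\gamma+2s+1}{2}}\br{v'}^{\frac{\gamma+2s+1}{2}}\df(v,v')^{-d-2s}\one_{\df(v,v')\le 1}$ on $\R^d\times\R^d$ define a symmetric measure and that the truncation $\one_{\df(v,v')\le 1}$ is preserved under the swap, so that the averaging step is justified. Once this symmetry is in hand, the proof is essentially automatic from Riesz representation; everything else is a direct consequence of the definition of $\nm{\cdot}_{N^{s,\gamma}}$.
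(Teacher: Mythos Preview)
Your proposal is correct and follows essentially the same approach as the paper: identify $\mathcal{L}$ as the operator satisfying $\int \mathcal{L}f\,\varphi = \br{f,\varphi}_{N^{s,\gamma}}$ and then invoke the Riesz representation theorem on the Hilbert space $N^{s,\gamma}$. The paper's proof is terser (it simply asserts the key identity ``by a direct computation''), whereas you spell out the symmetrization and Fubini justifications, but the argument is the same.
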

\begin{proof}
This is a direct consequence of the Riesz representation theorem applied to the Hilbert space $N^{s,\gamma}$. Indeed, by a direct computation we observe that whenever $f$ is smooth enough to make sense of the integral in the definition of $\mathcal Lf$, we have
\[ \br{f,g}_{N^{s,\gamma}} = \int \mathcal{L}f(v) g(v) \,\ud v. \]
\end{proof}

The operator $\mathcal L$ defined in Lemma \ref{lem:negative-norm} is similar to $I + (-\Delta)^s$, but with a weight for large $|v|$ matching that in the definition of $N^{s,\gamma}$.

Following \cite{albritton2019variational}, we define the weighted kinetic Sobolev space $\Hk$ as follows.
\begin{definition} \label{Def:kinetic-space}
Given an open set $D:=(0,T)\times\Omega\times\R^d \subset \R^{1+2d}$, we say $f \in \Hk(D)$ if $f \in L_{t,x}^2N_v^{s,\gamma}(D)$ and $\trp f \in L^2_{t,x} N_v^{-s,\gamma}(D)$ in the sense that
\[ \int_D f(t,x,v) \trp \varphi \,\ud v\ud x\ud t \leq C \| \varphi \|_{L_{t,x}^2N_v^{s,\gamma}(D)} \]
for any function $\varphi \in C^1_c(D)$ with $\text{supp}(\varphi)\subset D$. 
We further write
\[\|f\|_{\Hk(D)}^2 := \|f\|_{L_{t,x}^2N_v^{s,\gamma}(D)}^2 + \nm{\trp f}_{L^2_{t,x} N_v^{-s,\gamma}(D)}^2.\]
\end{definition}

The following result is proved essentially in \cite[Theorem 2]{gressmanstrain2011}.

\begin{theorem} \label{t:boundednessofQ}
Given a function $f: \R^d \to [0,\infty)$ that satisfies
\[ \int_{\R^d} f(v) \,\ud v \leq M_0, \qquad \int_{\R^d} f(v) |v|^2 \,\ud v \leq E_0.\]
Let $g$ and $h$ be two functions in $N^{s,\gamma}$, then
\[ \int_{\R^d} Q(f,g)h \,\ud v \leq C_f \nm{g}_{N^{s,\gamma}} \nm{h}_{N^{s,\gamma}}.\]
\end{theorem}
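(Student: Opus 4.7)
The plan is to reduce the estimate to a quadratic-form bound on the principal part of the collision operator via Carleman-type coordinates, and then match the resulting kernel against the definition of $N^{s,\gamma}$.

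First, using the change of variables $\sigma\mapsto v'$ for each fixed pair $(v,\vs)$, the Boltzmann bilinear form admits a Carleman representation
\[
\int_{\R^d}Q(f,g)h\,\ud v \;=\; \iint_{\R^d\times\R^d}K_f(v,v')\bigl(g(v')-g(v)\bigr)h(v)\,\ud v'\,\ud v,
\]
where $K_f(v,v')$ is obtained by integrating $f$ against the codimension-one manifold of admissible collisional partners $\vs$ for each fixed pair $(v,v')$. The standard computation, carried out in \cite{gressmanstrain2011}, yields the pointwise bound
\[
K_f(v,v') \;\ls\; C_f\, \br{v}^{\frac{\gamma+2s+1}{2}}\br{v'}^{\frac{\gamma+2s+1}{2}}\, \df(v,v')^{-d-2s}\one_{\df(v,v')\le 1} + C_f\,\br{v}^{\gamma+2s}\one_{\df(v,v')> 1},
\]
with $C_f$ depending only on $M_0$ and $E_0$.

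With this kernel estimate in hand, I would symmetrize the bilinear form. Writing
\[
(g(v')-g(v))\,h(v) = \tfrac12\bigl(g(v')-g(v)\bigr)\bigl(h(v')-h(v)\bigr) + \tfrac12\bigl[(g(v')-g(v))h(v')+(g(v')-g(v))h(v)\bigr],
\]
and using the approximate symmetry $K_f(v,v')\approx K_f(v',v)$, the first summand contributes
\[
\iint K_f(v,v')\bigl(g(v')-g(v)\bigr)\bigl(h(v')-h(v)\bigr)\,\ud v'\,\ud v,
\]
to which Cauchy-Schwarz applies and reproduces, on $\df(v,v')\le 1$, precisely the two Gagliardo seminorms entering $\nm{g}_{N^{s,\gamma}}\nm{h}_{N^{s,\gamma}}$. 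The antisymmetric remainder collapses, after another use of the approximate symmetry and a change of variables, to a zeroth-order expression with weight $\br{v}^{\gamma+2s}$, which is absorbed by the $L^2(\br{v}^{\gamma+2s}\,\ud v)$ part of the $N^{s,\gamma}$ norm. The contribution from $\df(v,v')>1$ is handled by splitting $g(v')-g(v)$ into its two summands and applying the same weighted $L^2$ control.

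The main obstacle is the anisotropic kernel bound: a naive substitution produces a singularity in $|v-v'|$ together with a Jacobian factor, and one must exploit the cylindrical geometry of the collisional manifold together with the $|v-\vs|^\gamma$ weight to convert this into the sharp decay $\df(v,v')^{-d-2s}$ with the precise weights $\br{v}^{(\gamma+2s+1)/2}\br{v'}^{(\gamma+2s+1)/2}$. Everything afterwards is a routine application of Cauchy-Schwarz in the inner product inducing $N^{s,\gamma}$.
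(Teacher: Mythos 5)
Your plan hinges on a claim that is not available under the hypotheses: a \emph{pointwise} upper bound $K_f(v,v')\ls C_f\,\br{v}^{\frac{\gamma+2s+1}{2}}\br{v'}^{\frac{\gamma+2s+1}{2}}\df(v,v')^{-d-2s}$ with $C_f$ depending only on $M_0$ and $E_0$. The Carleman kernel is $K_f(v,v')\approx |v-v'|^{-d-2s}\int_{w\perp (v'-v)} f(v+w)|w|^{\gamma+2s+1}\,\ud w$, i.e.\ it integrates $f$ over a codimension-one hyperplane. Mass and energy bounds are $L^1$-type information and do not control such hyperplane integrals pointwise: $f$ may concentrate near a given hyperplane, making $K_f(v,v')$ arbitrarily large for particular pairs $(v,v')$ while $M_0,E_0$ stay fixed. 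This is exactly why the paper records the upper bound on $K_f$ only \emph{in average} over balls (inequality \eqref{K-upper-bound}) and explicitly warns that the kernel is not uniformly elliptic. Once the pointwise bound is removed, the step where you apply Cauchy--Schwarz and ``match the kernel against the definition of $N^{s,\gamma}$'' no longer goes through as stated; the actual proofs (Gressman--Strain, or Theorem 4.1 of Imbert--Silvestre and Appendix A of the 2022 paper, which is how the present paper justifies the statement, since it does not reprove it) work with dyadic decompositions around the diagonal and use only the averaged moment bound $\int_{B_r(v)}K_f(v,v')|v-v'|^2\,\ud v'\ls \Lambda(v)r^{2-2s}$, together with the anisotropic geometry encoded in $\df$. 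That is the genuinely nontrivial part of the theorem, and your sketch delegates it to a ``standard computation'' that in fact fails.

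A second, related gap: your symmetrization invokes an ``approximate symmetry $K_f(v,v')\approx K_f(v',v)$'' and asserts the antisymmetric remainder collapses to a zeroth-order term with weight $\br{v}^{\gamma+2s}$. The only symmetry available is $K_f(v,v+u)=K_f(v,v-u)$; the discrepancy between $K_f(v,v')$ and $K_f(v',v)$ is controlled only after integration in $v'$, via the cancellation lemma, and it produces the term $c_b\left(f\ast|\cdot|^\gamma\right)g\,h$, which is moreover missing from your initial representation of $\int Q(f,g)h$. For $\gamma<0$ this convolution is \emph{not} pointwise bounded by a power of $\br{v}$ under mass/energy bounds alone (again $f$ can concentrate), so this term requires its own argument using the $N^{s,\gamma}$ regularity of $g$ and $h$ (e.g.\ splitting $|v-v_*|^\gamma$ near and far from the singularity and using the Sobolev embedding of the $H^s$ part, which is where the restriction $\gamma+2s>0$ enters). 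As it stands, the proposal proves the theorem only under pointwise decay assumptions on $f$ that are much stronger than \eqref{assumption:hydrodynamic}, which would defeat the purpose of the conditional estimate.
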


The Assumption U, given in \cite{gressmanstrain2011} is sligtly different and not directly implied by our upper bounds on $M_0$ and $E_0$. However, this version of Theorem \ref{t:boundednessofQ} follows with minimal effort after the ideas developed in \cite{gressmanstrain2011} (see also \cite[Theorem 4.1]{imbert-silvestre-whi2020} and Appendix A of \cite{imbert-silvestre-2022}).

A way to interpret Theorem \ref{t:boundednessofQ} is that if $f$ satisfies \eqref{assumption:hydrodynamic} and $g \in L^2_{t,x} N_v^{s,\gamma}$, then $Q(f,g) \in L^2_{t,x} N_v^{-s,\gamma}$, with
\begin{align}\label{q-estimate}
    \nm{Q(f,g)}_{L^2_{t,x} N_v^{-s,\gamma}} \leq C_f \nm{g}_{L^2_{t,x} N_v^{s,\gamma}}.
\end{align}

\subsection{Notion of Weak Solutions}
\label{Sec:weak-sol}

Now that we described the functional spaces that we use, we define the notion weak solutions in $D=(0,T) \times \Omega \times \R^d$ with the four types of boundary conditions.

\begin{definition}[Weak solutions with in-flow boundary] \label{d:weak-sol-inflow}
We say that a function $f \in L_{t,x}^2N_v^{s,\gamma}(D)$ is a weak solution of \eqref{eqn:boltzmann} in $D$ with the in-flow boundary condition,
if for any test function $\varphi \in C^1_c\left((0,T) \times \overline\Omega \times \R^d\right)$ so that $\varphi|_{\BC_+}=0$, we have
\begin{align}\label{weak-formulation-inflow}
&\iiint_{(0,T)\times\Omega\times\R^d} \left\{f \,\trp \varphi + \Qff \varphi \right\}\,\ud v\ud x\ud t=\iiint_{\BC_-} g \varphi \,(v \cdot \vn) \ud v\ud S_x\ud t. 
\end{align}
\end{definition}

We recall that $\Qff$ belongs to $L^2_{t,x} N^{-s,\gamma}_v$ according to \eqref{q-estimate}. In this sense the integral of $\Qff \varphi$ is well defined for any test function $\varphi \in C^1_c\left((0,T) \times \overline\Omega \times \R^d\right)$.

\begin{definition}[Weak solutions with bounce-back boundary] \label{d:weak-sol-bounce-back} 
We say that a function $f \in L_{t,x}^2N_v^{s,\gamma}(D)$ is a weak solution of \eqref{eqn:boltzmann} in $D$ with the bounce-back boundary condition,
if for any test function $\varphi \in C^1_c\left((0,T) \times \overline\Omega \times \R^d\right)$ so that $\varphi(t,x,v)=\varphi(t,x,-v)$ for all $x\in\partial\Omega$, we have
\begin{align}  \label{weak-formulation-bounceback}
&\iiint_{(0,T)\times\Omega\times\R^d} \left\{f \,\trp \varphi + \Qff \varphi \right\}\,\ud v\ud x\ud t=0.
\end{align}
\end{definition}

\begin{definition}[Weak solutions with specular-reflection boundary] \label{d:weak-sol-specular}
We say that a function $f \in L_{t,x}^2N_v^{s,\gamma}(D)$ is a weak solution of \eqref{eqn:boltzmann} in $D$ with the specular-reflection boundary condition,
if for any test function $\varphi \in C^1_c\left((0,T) \times \overline\Omega \times \R^d\right)$ so that $\varphi(t,x,v)=\varphi(t,x,\mathcal{R}_xv)$ for all $x\in\partial\Omega$, we have
\begin{align} \label{weak-formulation-specular}
&\iiint_{(0,T)\times\Omega\times\R^d} \left\{f \,\trp \varphi + \Qff \varphi \right\}\,\ud v\ud x\ud t=0.
\end{align}
\end{definition}

\begin{remark} \label{r:diffuse} 
As described in Section \ref{s:diffuse}, we prove Theorem \ref{thm:main} in the cases of in-flow, bounce-back and specular-reflection boundary conditions. The case of diffuse-reflection is a particular case of the inflow boundary condition. If we wanted to define a weak solution in this case, it would coincide with Definition~\ref{d:weak-sol-inflow} but with $\pp[f]$ instead of the arbitrary function $g$. In any context where it makes sense to consider the diffuse-reflection boundary condition, the analysis in Section \ref{s:diffuse} holds and Theorem \ref{thm:main} applies.

It may have some interest to observe that the operator $\pp[f]$ is not necessarily well defined for arbitrary functions $f \in \Hk$. However, when $f$ satisfies in addition the assumption \eqref{assumption:hydrodynamic}, then we can easily see that the trace function $f|_{\BC}$ obtained from Proposition \ref{p:trace} belongs to $L^\infty((0,T)\times \partial \Omega, L^1_2(\R^d))$. Indeed, the upper bounds on the mass and energy densities in \eqref{assumption:hydrodynamic} tell us that $f \in L^\infty((0,T)\times \Omega, L^1_2(\R^d))$. We can rigorously justify that the trace of $f$ satisfies the same bound by approximating $f$ with smooth functions (as in Lemma~\ref{lem:mollification}), observing that the trace of the approximate functions are bounded in $L^\infty((0,T)\times \partial \Omega, L^1_2(\R^d))$, and applying Fatou's lemma.
\end{remark}

\begin{remark}
Definitions \ref{d:weak-sol-inflow}, \ref{d:weak-sol-bounce-back} and \ref{d:weak-sol-specular} encode at the same time the equation \eqref{eqn:boltzmann} in the interior of the domain and the boundary conditions. Indeed, taking the test function $\varphi$ to be compactly supported in the interior of $D$, we deduce for any of these definitions that $\trp f = \Qff$ holds in the sense of distributions in $D$. After this, replacing $\Qff$ with $\trp f$ in Definition \ref{d:weak-sol-inflow}, we observe that for any test function $\varphi \in C^1_c\left((0,T) \times \overline\Omega \times \R^d\right)$ so that $\varphi|_{\BC_+}=0$, we have
\[ \int_0^T\iint_{\Omega\times\R^d} \left\{\trp f \,\varphi + f \,\trp \varphi \right\}\,\ud v\ud x\ud t=\int_0^T\iint_{\BC_-} g \varphi \,(v \cdot \vn) \ud v\ud S_x\ud t. \]
This identity encodes the boundary condition (without the rest of the equation). A similar identity follows for each type of boundary condition.
\end{remark}

\subsection{Structure of the Collision Operator} \label{Sec:opt-str}

Recall the non-cutoff Boltzmann collision operator $\Qff$ from \eqref{Boltzmann-opt} with the parameters given in \eqref{B-kernel} and \eqref{b-kernel}: $2s$ is the angular singularity exponent and $\gamma$ is the exponent in terms of $|v-v_\ast|$.

Let us use Carleman coordinates
\begin{align*}
    (\sigma,\vs)\rt \left(w:=\vs'-v,v'\right), \\
    w\perp (v'-v),\qquad \vs=v'+w.
\end{align*}
In terms of these new variables, the collision operator becomes 
\begin{align}\label{kernel}
    \Qff(v)&=\int_{\R^d}\left(f(v')K_f(v,v')-f(v)K_f(v',v)\right)\ud v'\\
    &=\int_{\R^d}\left(f(v')-f(v)\right)K_f(v,v')\,\ud v'+ 
    f(v)\int_{\R^d}\left(K_f(v,v')-K_f(v',v)\right)\ud v',\no
\end{align}
where the kernel $K_f$ depends on $f$ through
\begin{align*}
    K_f(v,v')&=2^{d-1}\abs{v'-v}^{-1}\int_{w\perp(v'-v)}f(v+w)B(r,\cos\theta) r^{-d+2}\,\ud w\ \ \text{ with}\ \ \begin{cases}r^2=\abs{v'-v}^2+\abs{w}^2\\
    \cos\left(\frac{\theta}{2}\right)=\frac{\abs{w}}{r}\end{cases}\\
    &\approx \abs{v-v'}^{-d-2s}\left\{\int_{w\perp(v'-v)}f(v+w)\abs{w}^{\gamma+2s+1}\ud w\right\}.
\end{align*}
Furthermore, the well-known cancellation lemma takes the following form in terms of the kernel $K_f$.
\begin{align}\label{K-cancellation}
    \int_{\R^d}\left(K_f(v,v')-K_f(v',v)\right)\ud v'=c_b\int_{\R^d}f(v_*)\abs{v-v_*}^{\gamma}\ud v_*,
\end{align}
with constant 
\begin{align*}
    c_b=\int_{\S^{d-1}}\left\{\frac{2^{\frac{d+\gamma}{2}}}{(1+\sigma\cdot\mathbf{e})^{\frac{d+\gamma}{2}}}-1\right\}b(\sigma\cdot\mathbf{e})\,\ud\sigma
\end{align*}
for any $\mathbf{e}\in\S^{d-1}$. 
Therefore, we may rewrite
\begin{align} \label{Q-diffusion}
    \Qff&=\int_{\R^d}\left(f(v')-f(v)\right)K_f(v,v')\,\ud v'+ 
    c_b\left(\int_{\R^d}f(v_*)\abs{v-v_*}^{\gamma}\ud v_*\right) f(v)\\
    &=:\li_Kf+c_b\left(f\ast_v\abs{\,\cdot\,}^{\gamma}\right)f.\no
\end{align}
Here, $\li_K$ is a nonlinear integro-differential diffusion operator which leads to smoothing eﬀect, and $c_b\left(f\ast_v\abs{\,\cdot\,}^{\gamma}\right)f$ is a lower-order term.
One may think of $\Qff$ as a reaction diffusion operator. The second term $c_b\left(f\ast_v\abs{\,\cdot\,}^{\gamma}\right)f$ would be the reaction term which makes the function $f$ grow, and it is therefore the bad term when it gets to compute $L^\infty$ estimates.

The important properties of the diffusion kernel $K_f$ are the following (see \cite{imbert-silvestre-survey2020,silvestre2016}):
\begin{itemize}
	\item Symmetry: $K_f(v,v+u) = K_f(v,v-u)$ for any $u\in\R^d$.
	\item Bounded from above by the fractional Laplacian on average:
	\begin{align} \label{K-upper-bound}
	    \int_{B_r(v)} K_f(v,v') |v-v'|^2 \,\ud v' \leq C \left(\int_{\R^d} f(v_\ast) |v-v_\ast|^{\gamma+2s} \,\ud v_\ast\right) r^{2-2s}
	    \leq \Lambda(v)\, r^{2-2s} .
	\end{align}
	Here $\Lambda(v) = C \langle v \rangle^{\gamma+2s}$ for some constant $C>0$ depending on $M_0$ and $E_0$ in \eqref{assumption:hydrodynamic}.
	\item Cone of non-degeneracy: 
	for fixed $t$ and $x$,
	there exists a set $\Xi(v) \subset \R^d$ for every point $v \in \R^d$ so that
	\begin{itemize}
		\item $\Xi(v)$ is a symmetric cone, which means that $\lambda \Xi(v) = \Xi(v)$ for all $\lambda \in \R$, $\lambda \neq 0$.
		\item Lower bound for $K_f$ in the directions of $\Xi(v)$: 
		\begin{align} \label{K-lower-bound}
		    K_f(v,v') \geq \lambda(v) |v-v'|^{-d-2s}
		    \;\;\text{ whenever }\; v'-v \in \Xi(v).
		\end{align}
		Here $\lambda(v) = c \langle v \rangle^{\gamma+2s+1}$ for some constant $c>0$ depending on 
		the parameters $m_0$, $M_0$, $E_0$ and $H_0$ in \eqref{assumption:hydrodynamic} and dimension $d$.
		\item The measure $m\left(\Xi(v) \cap \S^{d-1}\right) \geq c \langle v \rangle^{-1}$, for some constant $c$ depending on $m_0$, $M_0$, $E_0$ and $H_0$ in \eqref{assumption:hydrodynamic} and dimension $d$. Moreover, $\Xi(v) \cap \S^{d-1}$ is contained in a band of width $\lesssim \langle v \rangle^{-1}$ around the equator perpendicular to $v$.
	\end{itemize}
	\item Cancellation lemma \eqref{K-cancellation}.
\end{itemize}
\begin{remark}
Note that the bounds \eqref{K-upper-bound} and \eqref{K-lower-bound} are weaker than the usual uniform ellipticity condition $\lambda|v-v'|^{-d-2s}\leq K(v,v')\leq \Lambda|v-v'|^{-d-2s}$ for integro-differential equations. The lower bound holds only in the cone of nondegeneracy, and the upper bounds holds only in average.

We also stress that the existence of the cone of non-degeneracy of $K_f$ relies on the hydrodynamic bounds in \eqref{assumption:hydrodynamic}, and this is the only place where the entropy bound $H_0$ is used.
\end{remark}

\subsection{Coercivity Estimate}

When computing the propagation of $L^2$ norms of various functions, one often deals with the integral expressions from \eqref{Q-diffusion} of the form
\begin{align} \label{Q-fgh}
    \int_{\R^d}Q(f,g)(v)h(v)\,\ud v=\iint_{\R^d\times\R^d}h(v)\left(g(v')-g(v)\right)K_f(v,v')\,\ud v'\ud v+c\int_{\R^d}\left(f\ast_v\abs{v}^{\gamma}\right)g(v)h(v)\,\ud v.
\end{align}
In particular, when $h=g$, we have
\begin{align} \label{Q-fgg}
    \int_{\R^d}Q(f,g)(v)g(v)\,\ud v=-\frac{1}{2}\iint_{\R^d\times\R^d}\abs{g(v')-g(v)}^2K_f(v,v')\,\ud v'\ud v+c\int_{\R^d}\left(f\ast_v\abs{v}^{\gamma}\right)|g(v)|^2\,\ud v.
\end{align}
This identity follows by a straightforward arithmetic manipulation and applying Fubini’s theorem. 
The first term is negative. Coercivity estimates for the Boltzmann collision operator amount to estimating how negative this first term needs to be.

The following inequality may be seen as a combination of a coercivity estimate with the Sobolev embedding for weighted Sobolev norms. Except that it is more complicated to prove each one of these inequalities separately. Here we verify the formula in one shot, using a computation similar to \cite{chaker2022entropy}.

\begin{lemma}[Coercivity estimate] \label{lem:coercivity}
Let $p>2$ be the exponent satisfying $\frac{1}{p}=\frac{1}{2}-\frac{s}{d}$,
and let $n=\frac{1}{2}\left(\gamma+2s-\frac{2s}{d}\right)$ and $k=\frac{1}{2}\left(-\gamma-d+1\right)$.
Then there exist constants $c_0>0$ and $C_1\geq 0$ (depending only on the hydrodynamic bounds in \eqref{assumption:hydrodynamic} and dimension $d$) such that
for any $g\in L_n^p(\R^d)$, it holds that
\begin{align} \label{coercivity_n<0}
    \iint_{\R^d\times\R^d}\abs{g(v')-g(v)}^2K_f(v,v')\,\ud v'\ud v\geq c_0\nm{g}_{L_n^p(\R^d)}^{2-p}
    \cdot\! \int_{\left\{v\in\R^d:\abs{g(v)}\geq C_1\nm{g}_{L_n^p}\br{v}^k\right\}} \langle v \rangle^{np} |g(v)|^p \,\ud v.
\end{align}
In particular, when $n\geq 0$, we have a stronger bound 
\begin{align} \label{coercivity_n>0}
    \iint_{\R^d\times\R^d}\abs{g(v')-g(v)}^2K_f(v,v')\,\ud v'\ud v\geq c_0\nm{g}_{L_n^p(\R^d)}^2,
\end{align}
which is equivalent to taking $C_1=0$ in \eqref{coercivity_n<0}.
\end{lemma}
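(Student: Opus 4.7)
The plan is to combine the cone non-degeneracy of $K_f$ with a weighted fractional Sobolev embedding, using the threshold $C_1\|g\|_{L^p_n}\langle v\rangle^k$ as a truncation level that absorbs the contribution of small values of $g$. By scaling invariance under $g\mapsto\lambda g$ (both sides scale as $\lambda^2$), one may normalize $\|g\|_{L^p_n}=1$; the goal becomes
\[
    D(g):=\iint \abs{g(v')-g(v)}^2 K_f(v,v')\,\ud v'\ud v \;\geq\; c_0 \int_{\{\abs{g}\geq C_1\langle v\rangle^k\}} \langle v\rangle^{np}\abs{g(v)}^p\,\ud v.
\]
The first reduction is to discard the integrand outside the cone $v'-v\in\Xi(v)$, using \eqref{K-lower-bound} together with the angular measure bound $m(\Xi(v)\cap\S^{d-1})\gtrsim\langle v\rangle^{-1}$. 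This provides a lower bound of $D(g)$ by a weighted, anisotropic fractional seminorm of $g$ with weight $\langle v\rangle^{\gamma+2s+1}$ supported in the cone directions.

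The second step is to introduce the rescaled function $\tilde g(v):=\langle v\rangle^n g(v)$, so that $\|\tilde g\|_{L^p(\R^d)}=\|g\|_{L^p_n}=1$, and apply the standard fractional Sobolev embedding $\|\tilde g\|_{L^p}^2\lesssim [\tilde g]_{\dot H^s}^2$. Expanding
\[
    \tilde g(v')-\tilde g(v) = (g(v')-g(v))\langle v'\rangle^n + g(v)\left(\langle v'\rangle^n-\langle v\rangle^n\right),
\]
the Sobolev seminorm splits into a main term dominated by the weighted Dirichlet form from the first step (the exponent match $2n=\gamma+2s-2s/d\leq\gamma+2s$ together with the cone-measure deficit $\langle v\rangle^{-1}$ offsetting the extra $\langle v\rangle^{+1}$ in the kernel lower bound are both central to this domination), plus a commutator term of the form $\int \langle v\rangle^\alpha \abs{g(v)}^2\,\ud v$ for a specific $\alpha<0$. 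Converting the anisotropic cone-restricted seminorm to an isotropic one calls for a Fubini exchange across directions $e\in\S^{d-1}$, where for each $e$ the locus of $v$ with $e\in\Xi(v)$ forms a slab whose transverse measure absorbs the angular deficit.

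The commutator term is handled by splitting according to the threshold: on the region $\{\abs{g}<C_1\langle v\rangle^k\}$,
\[
    \int_{\{\abs{g}<C_1\langle v\rangle^k\}} \langle v\rangle^\alpha \abs{g}^2 \,\ud v \leq C_1^2 \int_{\R^d} \langle v\rangle^{\alpha+2k}\,\ud v,
\]
and the precise value $k=\tfrac12(-\gamma-d+1)$ is the borderline exponent making this polynomial integral convergent for the $\alpha$ produced by the weight computation. Taking $C_1$ large enough makes this low-part contribution a small fraction of $\|\tilde g\|_{L^p}^2=1$, so it can be absorbed on the right-hand side of the Sobolev estimate, and the remaining high-part contribution gives the claimed lower bound in terms of $\int_{\{\abs{g}\geq C_1\langle v\rangle^k\}}\langle v\rangle^{np}\abs{g}^p\,\ud v$. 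When $n\geq 0$ the weight is already favorable enough that no truncation is needed and the stronger estimate \eqref{coercivity_n>0} is obtained directly. The main obstacle is the bookkeeping of the anisotropic weights --- in particular, the Fubini-type reduction from the cone-restricted Dirichlet form to an isotropic seminorm of $\tilde g$, since the cone $\Xi(v)\cap\S^{d-1}$ lies in a thin equatorial band of width $\langle v\rangle^{-1}$ and one must track carefully which $v$ contribute to each direction so that the weights $\gamma+2s+1$, $-1$, $2n$, and $2k$ balance out to precisely the borderline exponents in the statement.
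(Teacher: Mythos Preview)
Your strategy has a structural gap at the step where you claim the isotropic seminorm $[\tilde g]_{\dot H^s}^2$ is ``dominated by the weighted Dirichlet form from the first step.'' The only lower bound available for $D(g)$ is the cone-restricted quantity
\[
\int_{\R^d}\int_{v'-v\in\Xi(v)} \langle v\rangle^{\gamma+2s+1}\,\frac{|g(v')-g(v)|^2}{|v'-v|^{d+2s}}\,\ud v'\ud v,
\]
and this cannot control the full isotropic $[\tilde g]_{\dot H^s}^2$: the cone $\Xi(v)\cap\S^{d-1}$ lies in an equatorial band of angular width $\lesssim\langle v\rangle^{-1}$, so variations of $g$ in the remaining directions (in particular, the direction of $v$ itself) are simply invisible to the cone-restricted form yet contribute fully to $[\tilde g]_{\dot H^s}$. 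Your Fubini exchange $\int_{e}\int_{v\in S_e}\cdots$ goes the wrong way---it rewrites the cone-restricted form, but since for each $e$ the slab $S_e=\{v:e\in\Xi(v)\}$ is a strict subset of $\R^d$, you obtain an upper bound of cone-restricted by isotropic, not the reverse. A second problem is the commutator: the exponent actually produced is $\alpha=2n-2s$, giving $\alpha+2k=1-d-2s/d>-d$, so $\int_{\R^d}\langle v\rangle^{\alpha+2k}\,\ud v$ diverges; moreover the low-part bound $C_1^2\int\langle v\rangle^{\alpha+2k}$ grows with $C_1$ rather than shrinks, so it cannot be ``absorbed.''

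The paper's proof avoids invoking the isotropic Sobolev embedding altogether. It works pointwise in $v$: setting $N=\|g\|_{L^p_n}^p$ and choosing a scale $R(v)^d=CN\langle v\rangle^{-np+1}|g(v)|^{-p}$, a Chebyshev argument against $N$ forces $|g(v')|<\tfrac12|g(v)|$ on at least half of $v+\bigl(B_{R(v)/2}\cap\Xi(v)\bigr)$, provided $\langle v\rangle\geq R(v)$ so that $\langle v'\rangle\approx\langle v\rangle$ there. The kernel lower bound on this portion of the cone then yields
\[
\int_{\R^d}|g(v')-g(v)|^2K_f(v,v')\,\ud v'\;\gtrsim\;\langle v\rangle^{\gamma+2s}R(v)^{-2s}|g(v)|^2\;=\;N^{\frac2p-1}\langle v\rangle^{np}|g(v)|^p,
\]
and integrating over the ``good'' set $G=\{\langle v\rangle\geq R(v)\}=\{|g(v)|\geq C^{1/p}\|g\|_{L^p_n}\langle v\rangle^k\}$ gives \eqref{coercivity_n<0} directly. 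When $n\geq0$ the Chebyshev step also works on the complementary set (using the annulus $B_{3R}\setminus B_{2R}$ instead, since then $\langle v'\rangle\geq\langle v\rangle$ suffices), yielding \eqref{coercivity_n>0}. The role of the threshold $C_1\langle v\rangle^k$ is thus not to truncate a commutator but to characterize exactly those $v$ for which the chosen scale $R(v)$ fits inside the ball $B_{\langle v\rangle}$, which is what makes the pointwise argument go through.
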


\begin{proof}
Let us denote
\[ N := \nm{g}_{L_n^p(\R^d)}^p = \int_{\R^d} \langle v \rangle^{np} |g(v)|^p \,\ud v <\infty. \]
For any fixed $v\in\R^d$ (such that $g(v)\neq0$), 
we will exploit the cone of non-degeneracy $\Xi(v)$ for $K_f$ where $K_f$ has a lower bound (see Subsection~\ref{Sec:opt-str}).
Recall that the intersection of the cone $\Xi(v)$ with a ball $B_R$ has volume $\approx R^d \langle v \rangle^{-1}$.
We choose $R=R(v)$ (depending on $v$) satisfying 
\begin{align} \label{e:R}
R^d = CN \langle v \rangle^{-np+1}|g(v)|^{-p} 
\end{align} 
for some large constant $C>0$.
Next we split $\left\{v\in\R^d:g(v)\neq0\right\}$ into two sets: 
\begin{align*}
    G&:= \left\{v\in\R^d: \br{v}\geq R(v) \right\} 
    = \left\{\abs{g(v)}^p\geq CN\br{v}^{-d-np+1}\right\}
    = \left\{\abs{g(v)}\geq C^{\frac{1}{p}}\nm{g}_{L_n^p}\br{v}^{k}\right\}, \\
    B&:= \left\{v\in\R^d: \br{v}< R(v) \right\} 
    = \left\{\abs{g(v)}< C^{\frac{1}{p}}\nm{g}_{L_n^p}\br{v}^{k}\right\}.
\end{align*}

For $v\in G$, we claim that $|g(v')| < \frac{1}{2}|g(v)|$ for points $v'$ in $v + \left(B_{R/2} \cap \Xi(v)\right)$ that amount to at least half of its measure, i.e. 
\begin{align} \label{xiaofu}
    \abs{\left\{v'\in v + \left(B_{R/2} \cap \Xi(v)\right): |g(v')| < \tfrac{1}{2}|g(v)| \right\}}
    \geq \tfrac{1}{2}\abs{B_{R/2} \cap \Xi(v)}\approx R^d \langle v \rangle^{-1}.
\end{align}
Indeed, since $\br{v}\geq R$ and $v'\in v+B_{R/2}$, we have $\frac{1}{2}\langle v \rangle\leq\br{v'}\leq\frac{3}{2}\langle v \rangle$.
Then the converse statement that $|g(v')| \geq \frac{1}{2}|g(v)|$ for at least half of $v'\in v + \left(B_{R/2} \cap \Xi(v)\right)$ implies 
\[ N = \int_{\R^d} \langle v' \rangle^{np} |g(v')|^p \,\ud v' \gtrsim \langle v \rangle^{np} |g(v)|^p\cdot R^d \langle v \rangle^{-1} = CN, \]
which cannot be true for sufficiently large $C$.
For $v\in B$ where $\br{v}< R$, we have the similar argument only in the case of $n\geq0$:
for $v'\in v+(B_{3R}\backslash B_{2R})$ we have $\br{v'}\geq\langle v \rangle$, and thus \eqref{xiaofu} still holds with $B_{R/2}$ replaced by $B_{3R}\backslash B_{2R}$. 

For the analysis below, we first treat the case when $v\in G$. 
Considering $K_f(v,v')\geq0$ and that by \eqref{K-lower-bound}
\[ 
	K_f(v,v') \gtrsim \langle v \rangle^{\gamma+2s+1} |v-v'|^{-d-2s}
	\;\;\text{ if }\; v'-v \in \Xi(v) ,
\]
we combine \eqref{xiaofu} to get
\begin{align} \label{temp}
    \int_{\R^d}\abs{g(v')-g(v)}^2K_f(v,v')\,\ud v'
    &\geq\int_{v + (B_{R/2} \cap \Xi(v))}\abs{g(v')-g(v)}^2K_f(v,v')\,\ud v'\\
    &\gtrsim \langle v \rangle^{\gamma+2s+1} R^{-d-2s} \int_{v + (B_{R/2} \cap \Xi(v))}\abs{g(v')-g(v)}^2\,\ud v' \no\\
    &\gtrsim \langle v \rangle^{\gamma+2s} R^{-2s} |g(v)|^2 
    = (CN)^{\frac{2}{p}-1}\langle v \rangle^{np} |g(v)|^p. \no
\end{align}
Here in the last equality we plug in our choice of $R$ in \eqref{e:R} with values of $p$ and $n$.
Finally, we integrate \eqref{temp} over $v\in G$ to obtain 
\[ \int_{G}\int_{\R^d}\abs{g(v')-g(v)}^2K_f(v,v')\,\ud v'\ud v \gtrsim N^{\frac{2}{p}-1} \int_{G} \langle v \rangle^{np} |g(v)|^p \,\ud v, \]
and so \eqref{coercivity_n<0} follows.

In the case of $n\geq0$, we can also get the same estimate \eqref{temp} for $v\in B$, by writing $B_{3R}\backslash B_{2R}$ instead of $B_{R/2}$.
For those $v\in\R^d$ such that $g(v)=0$, \eqref{temp} automatically holds, and thus it holds for all $v\in\R^d$.
Therefore, we may integrate \eqref{temp} over $v\in \R^d$ to obtain 
\begin{align*}
    \iint_{\R^d\times\R^d}\abs{g(v')-g(v)}^2K_f(v,v')\,\ud v'\ud v
    &\gtrsim N^{\frac{2}{p}-1} \int_{\R^d} \langle v \rangle^{np} |g(v)|^p \,\ud v
    = N^{\frac{2}{p}} = \nm{g}_{L_n^p(\R^d)}^2.
\end{align*}
This completes the proof of the lemma.
\end{proof}

\begin{remark}
The value of $p$ is the same exponent as in the usual Sobolev embedding $H^{s}(\R^d)\hookrightarrow L^p(\R^d)$. In fact, if we repeat the proof above for a kernel $K$ satisfying the stronger assumption $K(v,v') \gs \abs{v-v'}^{-d-2s}$ for all $v,v' \in \R^d$, we would derive the standard Sobolev inequality for fractional Sobolev spaces.
\end{remark}

\begin{remark}
The case $n=\frac{1}{2}\left(\gamma+2s-\frac{2s}{d}\right)<0$ happens only for soft potentials, and covers part of the moderately soft potential range.
It is interesting to note that this coercivity estimate also applies to the very soft potential case when $\gamma+2s\leq0$ (so $n<0$). 
\end{remark}

\section{Smooth Approximations and Trace} \label{Sec:smooth-approx}

\subsection{Some Properties of the Weighted Fractional Sobolev Spaces}

We analyze the spaces $N^{s,\gamma}$, $N^{-s,\gamma}$ and $\Hk$.

\begin{lemma} \label{l:comparability-of-sobolev-norms}
Suppose that $f \in N^{s,\gamma}$ and $f(v) = 0$ whenever $|v|>R$. Then, for some constant $C$ depending only on $s$, $\gamma$, the dimension $d$ and $R$, we have
\[ \frac 1 C \|f\|_{H^s(\R^d)} \leq \|f\|_{N^{s,\gamma}} \leq C \|f\|_{H^s(\R^d)}.\]
Here, $H^s(\R^d)$ denotes the standard fractional Sobolev space in $\R^d$ with the norm
\[ \|f\|_{H^s(\R^d)}^2 = \int_{\R^d} \abs{f(v)}^2 \ud v + \iint \frac{|f(v')-f(v)|^2}{|v'-v|^{d+2s}} \,\ud v' \ud v.\]
\end{lemma}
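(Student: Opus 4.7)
The plan is to exploit two elementary geometric facts about the distance $\df$ and to use the compact support of $f$ to control both the weights and the cutoff $\mathds{1}_{\df \leq 1}$.

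First I would record the key inequalities relating $\df(v,v')$ and $|v-v'|$. From the definition $\df(v,v')^2 = |v-v'|^2 + \tfrac14(|v|^2-|v'|^2)^2$ one gets unconditionally $\df(v,v') \geq |v-v'|$. Conversely, if $v, v' \in B_{R'}$ then
\[ \df(v,v') \leq |v-v'|\sqrt{1+\tfrac14(|v|+|v'|)^2} \leq C_{R'}\,|v-v'|. \]
Since $f$ is supported in $B_R$, and the $L^2$-weight $\br{v}^{\gamma+2s}$ is bounded above and below by positive constants on $B_R$, the first terms in both norms are immediately comparable. The nontrivial work is on the double integrals.

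For the upper bound $\nm{f}_{N^{s,\gamma}} \leq C\|f\|_{H^s}$, I would observe that in the double integral defining $\nm{f}_{N^{s,\gamma}}^2$ the integrand vanishes unless at least one of $v, v'$ lies in $B_R$, while the cutoff $\mathds{1}_{\df(v,v')\leq 1}$ combined with $|v-v'|\leq \df(v,v')$ forces $|v-v'|\leq 1$. Hence both $v$ and $v'$ lie in $B_{R+1}$, so the product of weights $\br{v}^{(\gamma+2s+1)/2}\br{v'}^{(\gamma+2s+1)/2}$ is bounded by a constant depending only on $R,s,\gamma,d$. Using $\df(v,v')^{-d-2s}\leq |v-v'|^{-d-2s}$ and dropping the cutoff, the nonlocal part of $\nm{f}_{N^{s,\gamma}}^2$ is bounded by a constant times the standard Gagliardo seminorm appearing in $\|f\|_{H^s}^2$.

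For the lower bound $\|f\|_{H^s} \leq C \nm{f}_{N^{s,\gamma}}$, the main point is to handle the region where $\df(v,v') > 1$ (where the $N^{s,\gamma}$ integrand vanishes but the $H^s$ integrand does not) and the region where $|v-v'|$ is bounded but $\df$ might still exceed $1$. I would fix a small radius $r = r(R) > 0$ such that whenever $v \in B_R$ and $|v-v'| \leq r$, the conclusion $\df(v,v')\leq 1$ holds; this is possible by the inequality $\df \leq C_{R+r}|v-v'|$. Splitting the $H^s$ double integral into the regions $\{|v-v'|\leq r\}$ and $\{|v-v'| > r\}$:
\begin{itemize}
\item On the far region $|v-v'|>r$, I would use the standard tail estimate $\int_{|v-v'|>r}|v-v'|^{-d-2s}\,\ud v' \leq C r^{-2s}$, so that this piece is controlled by $C_R \|f\|_{L^2}^2$, which is in turn controlled by the $L^2$ part of $\nm{f}_{N^{s,\gamma}}^2$ (via the upper bound on $\br{v}^{-\gamma-2s}$ on $B_R$).
\item On the near region $|v-v'|\leq r$, at least one of $v,v'$ lies in $B_R$ (otherwise both $f$'s vanish), so both lie in $B_{R+r}$; the weights are bounded below by positive constants, and $|v-v'|^{-d-2s}\leq C_R\,\df(v,v')^{-d-2s}$. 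By construction of $r$ the cutoff $\mathds{1}_{\df\leq 1}$ is active, so this piece is bounded by $C_R \nm{f}_{N^{s,\gamma}}^2$.
\end{itemize}

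Combining the two halves gives the desired lower bound. The one part requiring the most care is the near region in the lower bound: one must check that the cutoff $\mathds{1}_{\df\leq 1}$ does not remove anything at scale $|v-v'|\leq r$, which is exactly what the choice of $r=r(R)$ via the inequality $\df\leq C_{R+r}|v-v'|$ guarantees. Everything else is a routine bookkeeping of constants depending on $R$, $s$, $\gamma$, $d$.
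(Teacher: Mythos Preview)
Your proposal is correct and follows precisely the approach the paper indicates: the paper states only that $\df(v,v') \approx |v-v'|$ when both points lie in a bounded ball and then skips the computation, and you have carried out exactly this direct computation with the appropriate bookkeeping (localizing via the support and the cutoff, bounding the weights, and handling the far/near split for the lower bound). There is nothing to add.
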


Lemma \ref{l:comparability-of-sobolev-norms} is a consequence of the fact that $\df(v,v') \approx |v'-v|$ if we know that $v'$ and $v$ stay in some bounded ball $B_R$. The proof of Lemma \ref{l:comparability-of-sobolev-norms} is a direct computation following this fact and we skip it.

\begin{lemma} \label{l:comparability-of-negative-sobolev-norms}
Suppose that $f \in N^{-s,\gamma}$ and $\supp f \subset B_R$ (in the sense of distributions). Then, for some constant $C$ depending only on $s$, $\gamma$, the dimension $d$ and $R$, we have
\[ \frac 1 C \|f\|_{H^{-s}(\R^d)} \leq \|f\|_{N^{-s,\gamma}} \leq C \|f\|_{H^{-s}(\R^d)}.\]
Here, $H^{-s}(\R^d)$ denotes the standard negative fractional Sobolev space which is the dual of $H^s(\R^d)$.
\end{lemma}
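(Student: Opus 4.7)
The plan is to exploit duality: $N^{-s,\gamma}$ and $H^{-s}(\R^d)$ are the duals of $N^{s,\gamma}$ and $H^s(\R^d)$ respectively, and Lemma \ref{l:comparability-of-sobolev-norms} already tells us the primal norms are equivalent on functions supported in a bounded ball. Since $f$ itself is supported in $B_R$, we only need to control its action on test functions restricted to a bounded region, via a cutoff.

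Fix $\eta \in C_c^\infty(\R^d)$ with $\eta \equiv 1$ on $B_R$ and $\supp \eta \subset B_{R+1}$. For any test function $\varphi$, the support condition on $f$ gives $\int f \varphi\,\ud v = \int f(\eta\varphi)\,\ud v$. Both inequalities then follow by composing Lemma \ref{l:comparability-of-sobolev-norms} applied to $\eta\varphi$ (supported in $B_{R+1}$) with the fact that multiplication by $\eta$ is bounded on both $N^{s,\gamma}$ and $H^s(\R^d)$. For the inequality $\nm{f}_{N^{-s,\gamma}} \leq C \nm{f}_{H^{-s}}$, take $\varphi$ with $\nm{\varphi}_{N^{s,\gamma}}=1$ and estimate
\[ \left|\int f \varphi\,\ud v\right| = \left|\int f(\eta\varphi)\,\ud v\right| \leq \nm{f}_{H^{-s}} \nm{\eta\varphi}_{H^s} \leq C\nm{f}_{H^{-s}} \nm{\eta\varphi}_{N^{s,\gamma}} \leq C'\nm{f}_{H^{-s}}. \]
The reverse inequality is symmetric: given $\psi \in H^s(\R^d)$, write $\int f\psi\,\ud v = \int f(\eta\psi)\,\ud v$ and invoke the standard boundedness of multiplication by a $C_c^\infty$ cutoff on $H^s(\R^d)$ together with Lemma \ref{l:comparability-of-sobolev-norms} to pass between the two norms on the compactly supported function $\eta\psi$.

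The only nontrivial step is verifying that multiplication by $\eta$ is bounded on $N^{s,\gamma}$; this is where the compact support of $\eta$ does the work, since globally the two spaces $N^{s,\gamma}$ and $H^s(\R^d)$ genuinely differ by a weight at infinity. The zeroth-order term in $\nm{\eta\varphi}_{N^{s,\gamma}}^2$ is controlled trivially by $\nm{\eta}_{L^\infty}^2 \nm{\varphi}_{N^{s,\gamma}}^2$. For the Gagliardo-type term, use the decomposition $\eta(v)\varphi(v) - \eta(v')\varphi(v') = \eta(v')(\varphi(v) - \varphi(v')) + \varphi(v)(\eta(v) - \eta(v'))$: the first piece is directly bounded by $\nm{\eta}_{L^\infty}^2$ times the corresponding term for $\varphi$, and for the second piece use $\abs{\eta(v) - \eta(v')} \leq \nm{\nabla\eta}_{L^\infty}|v-v'| \leq \nm{\nabla\eta}_{L^\infty}\df(v,v')$. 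The integrand vanishes unless at least one of $v,v'$ lies in $B_{R+1}$; combined with $\df(v,v')\leq 1 \Rightarrow |v-v'|\leq 1$, both $v$ and $v'$ are then confined to $B_{R+2}$, where the weights $\br{v}^{(\gamma+2s+1)/2}$ are bounded. The remaining inner integral $\int \df(v,v')^{-d-2s+2}\one_{\df(v,v')\leq 1}\,\ud v' \lesssim \int_{|v-v'|\leq 1}|v-v'|^{2-d-2s}\,\ud v'$ converges since $s<1$, leaving a factor of $\int_{B_{R+2}}|\varphi|^2\,\ud v$ which is absorbed into $\nm{\varphi}_{N^{s,\gamma}}^2$ (the weight $\br{v}^{\gamma+2s}$ is bounded below on bounded sets). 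I expect the only real obstacle to be this bookkeeping; once boundedness of multiplication by $\eta$ on $N^{s,\gamma}$ is in hand, the dual estimates are immediate from the duality argument above.
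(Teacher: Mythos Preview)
Your proof is correct and follows essentially the same duality-plus-cutoff argument as the paper: restrict the supremum defining the dual norm to compactly supported test functions via a bump function, then invoke Lemma \ref{l:comparability-of-sobolev-norms}. The only cosmetic difference is that the paper cites its product rule (Lemma \ref{lem:Nsgofproduct}) for the boundedness of multiplication by the cutoff on $N^{s,\gamma}$, whereas you prove this special case directly; your direct argument is fine and in fact sidesteps the hypothesis in Lemma \ref{lem:Nsgofproduct} that both factors be bounded.
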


\begin{proof}
By definition
\[ \|f\|_{N^{-s,\gamma}} = \sup \left\{ \langle f, g \rangle : \|g\|_{N^{s,\gamma}} = 1 \right\}.\]
Here $\langle \cdot,\cdot \rangle$ denotes the pairing between $N^{-s,\gamma}$ and $N^{s,\gamma}$.

Since $\supp f \subset B_R$, if we take a smooth bump function $b : \R^d \to [0,1]$ so that $b = 1$ in $B_R$ and $b=0$ outside $B_{2R}$, we have $\langle f , b g \rangle = \langle f , g \rangle$. We have $\|bg\|_{N^{s,\gamma}} \lesssim \|g\|_{N^{s,\gamma}}$ (from Lemma \ref{lem:Nsgofproduct}). Thus, we may only lose a constant factor by restricting the supremum above to those functions supported in $B_{2R}$.
\[ \|f\|_{N^{-s,\gamma}} \approx \sup \left\{ \langle f, g \rangle : \|g\|_{N^{s,\gamma}} = 1 \text{ and } \supp g \subset B_{2R} \right\}.\]

From Lemma \ref{l:comparability-of-sobolev-norms}, we know that $\|g\|_{N^{s,\gamma}} \approx \|g\|_{H^s}$, and the result follows.
\end{proof}

\begin{lemma} \label{l:commutator}
Let $\mathcal L$ be the operator from Lemma \ref{lem:negative-norm}. For any smooth function $b : \R^d \to \R$ with compact support and $f \in N^{s,\gamma}$, we have the following commutator estimate
\[ \left(\int_{\R^d} \langle v\rangle^{-(\gamma+2s)} \abs{\mathcal L[bf](v) - b(v) \mathcal L f(v)}^2 \ud v\right)^{1/2} \leq M_b \|f\|_{N^{s,\gamma}},\]
where
\[ M_b := 2\sup_v \left\vert \int_{\df(v,v')<1} \langle v \rangle^{\frac 12}\langle v' \rangle^{\frac 12} \frac{b(v')-b(v)}{\df(v,v')^{d+2s}} \,\ud v' \right\vert + 2\sup_v \left(\int_{\df(v,v')<1} \langle v \rangle^{\frac 12}\langle v' \rangle^{\frac 12} \frac{|b(v')-b(v)|^2}{\df(v,v')^{d+2s}} \,\ud v'\right)^{1/2}.\]

Consequently,
\[ \nm{\mathcal L[bf](v) - b(v) \mathcal L f(v)}_{N^{-s,\gamma}} \leq M_b \|f\|_{N^{s,\gamma}}.\]
\end{lemma}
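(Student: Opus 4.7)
The approach is to compute the commutator explicitly and exploit the cancellation of the pointwise-multiplier piece of $\mathcal{L}$. Writing $\alpha:=(\gamma+2s+1)/2$, the $\langle v\rangle^{\gamma+2s}f(v)$ terms in $\mathcal{L}[bf](v)$ and $b(v)\mathcal{L}f(v)$ cancel identically, and a direct rearrangement of the remaining integrals gives
\begin{equation*}
    \mathcal{L}[bf](v)-b(v)\mathcal{L}f(v) \;=\; -2\int_{\R^d}\langle v\rangle^{\alpha}\langle v'\rangle^{\alpha}\,\frac{f(v')\bigl(b(v')-b(v)\bigr)}{\df(v,v')^{d+2s}}\,\mathds{1}_{\df(v,v')\leq 1}\,\ud v'.
\end{equation*}
I then split $f(v')=f(v)+(f(v')-f(v))$ to separate this into a term $T_1(v)$ with $f(v)$ factored out of the integral and a Gagliardo-type term $T_2(v)$ involving $f(v')-f(v)$. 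The target is to bound each in $L^2_{-(\gamma+2s)/2}$, after which the stated $N^{-s,\gamma}$ conclusion follows from the trivial embedding $L^2_{-(\gamma+2s)/2}\hookrightarrow N^{-s,\gamma}$ (a weighted Cauchy--Schwarz against the $L^2_{(\gamma+2s)/2}$ piece of $\|\cdot\|_{N^{s,\gamma}}$).

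The crucial geometric input is that on $\{\df(v,v')\leq 1\}$ one has $|v-v'|\leq 1$ and $\bigl||v|^2-|v'|^2\bigr|\leq 2\df(v,v')$, which together imply $\langle v\rangle\approx\langle v'\rangle$ and the quantitative decomposition
\[\langle v\rangle^{\alpha}\langle v'\rangle^{\alpha}=\langle v\rangle^{\gamma+2s}\cdot\langle v\rangle^{1/2}\langle v'\rangle^{1/2}\cdot\bigl(1+\rho(v,v')\bigr),\qquad |\rho(v,v')|\lesssim \df(v,v')/\langle v\rangle^{2},\]
where $\rho(v,v'):=(\langle v'\rangle/\langle v\rangle)^{\alpha-1/2}-1$. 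This is the bridge between the $\alpha$-weights built into $\mathcal{L}$ and the ``baseline'' weights $\langle v\rangle^{1/2}\langle v'\rangle^{1/2}$ appearing inside the definition of $M_b$.

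The easy term is $T_2$: applying Cauchy--Schwarz to split the kernel $\langle v\rangle^{\alpha}\langle v'\rangle^{\alpha}/\df^{d+2s}$ symmetrically between $|f(v')-f(v)|^2$ and $|b(v')-b(v)|^2$, and then using the decomposition above on the $b$-factor, gives the pointwise estimate $\int\langle v\rangle^{\alpha}\langle v'\rangle^{\alpha}|b(v')-b(v)|^2\df^{-d-2s}\mathds{1}_{\df\leq 1}\,\ud v'\lesssim \langle v\rangle^{\gamma+2s}M_b^{2}$. Integrating $\langle v\rangle^{-(\gamma+2s)}|T_2|^2$ over $v$ then leaves exactly the Gagliardo semi-norm inside $\|f\|_{N^{s,\gamma}}^2$.

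The delicate term is $T_1=-2f(v)A(v)$: because only the linear increment $b(v')-b(v)$ appears, the integral $A(v)$ must be interpreted in a principal-value sense and the cancellation encoded in the first supremum of $M_b$ must be preserved. I split $A=A_{\text{main}}+A_\rho$ using the weight decomposition. The main piece $A_{\text{main}}$ cleanly pulls out a factor $\langle v\rangle^{\gamma+2s}$ in front of exactly the oscillatory integral bounded by the first sup of $M_b$, yielding $|f(v)A_{\text{main}}(v)|\lesssim\langle v\rangle^{\gamma+2s}M_b|f(v)|$, which is square-integrable against $\langle v\rangle^{-(\gamma+2s)}\ud v$ with norm $\lesssim M_b\|f\|_{L^2_{(\gamma+2s)/2}}\leq M_b\|f\|_{N^{s,\gamma}}$. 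The error piece $A_\rho$ carries the extra factor $\rho$, supplying $\df/\langle v\rangle^{2}$ of decay; pairing $|\rho|^{2}/\df^{d+2s}\lesssim \df^{2-d-2s}/\langle v\rangle^{4}$ (integrable near the diagonal because $s<1$) against $|b(v')-b(v)|^2$ via Cauchy--Schwarz and the second sup of $M_b$ produces a strictly lower-order remainder. The main obstacle of the argument is precisely this weight mismatch: $M_b$ is phrased with the weights $\langle v\rangle^{1/2}\langle v'\rangle^{1/2}$ rather than $\langle v\rangle^{\alpha}\langle v'\rangle^{\alpha}$, so the principal-value cancellation has to be surgically isolated from the main term, and this only works because the perturbation $\rho$ is of order $\df/\langle v\rangle^{2}$ rather than $O(1)$.
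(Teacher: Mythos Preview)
Your approach is essentially the same as the paper's: compute the commutator explicitly, split $f(v')=f(v)+(f(v')-f(v))$, and bound the two resulting pieces by Cauchy--Schwarz and the definition of $M_b$, then pass to $N^{-s,\gamma}$ via the embedding $L^2_{-(\gamma+2s)/2}\hookrightarrow N^{-s,\gamma}$. Your handling of the weight mismatch via the $\rho$-decomposition is actually more careful than the paper, which simply absorbs the discrepancy between $\langle v\rangle^{\alpha}\langle v'\rangle^{\alpha}$ and $\langle v\rangle^{\gamma+2s}\cdot\langle v\rangle^{1/2}\langle v'\rangle^{1/2}$ (valid since $\langle v\rangle\approx\langle v'\rangle$ on $\{\df\leq 1\}$) into the implicit constant of a $\lesssim$.
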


\begin{proof}
Computing $\mathcal L[bf](v) - b(v) \mathcal L f(v)$ directly using the formula of Lemma \ref{lem:negative-norm}, many terms cancel out and we are left with
\begin{align*}
\mathcal L[bf](v) - b(v) \mathcal L f(v) &= \int_{\df(v,v')<1} \langle v \rangle^{\frac{\gamma+2s+1}2}\langle v' \rangle^{\frac{\gamma+2s+1}2} \frac{f(v') \left(b(v)-b(v')\right)}{\df(v,v')^{d+2s}} \,\ud v' \\
&= \int_{\df(v,v')<1} \langle v \rangle^{\frac{\gamma+2s+1}2}\langle v' \rangle^{\frac{\gamma+2s+1}2} \frac{\left(f(v')-f(v)\right) \left(b(v)-b(v')\right)}{\df(v,v')^{d+2s}} \,\ud v'  \\
&\phantom{=} + f(v) \int_{\df(v,v')<1} \langle v \rangle^{\frac{\gamma+2s+1}2}\langle v' \rangle^{\frac{\gamma+2s+1}2} \frac{b(v)-b(v')}{\df(v,v')^{d+2s}} \,\ud v' \\
\intertext{We use the Cauchy–Schwarz inequality for the first term.}
\abs{\mathcal L[bf](v) - b(v) \mathcal L f(v)} &\lesssim \langle v \rangle^{\frac{\gamma+2s}2} M_b \left( \int_{\df(v,v')<1} \langle v \rangle^{\frac{\gamma+2s+1}2}\langle v' \rangle^{\frac{\gamma+2s+1}2} \frac{|f(v')-f(v)|^2}{\df(v,v')^{d+2s}} \,\ud v' \right)^{1/2} + \langle v \rangle^{\gamma+2s} M_b f(v).
\end{align*}
Taking squares and integrating in $v$, we conclude
\begin{align*}
\int_{\R^d} \langle v \rangle^{-(\gamma+2s)} \abs{\mathcal L[bf](v) - b(v) \mathcal L f(v)}^2 \,\ud v &\lesssim M_b^2 \left( \iint_{\df(v,v')<1} \langle v \rangle^{\frac{\gamma+2s+1}2}\langle v' \rangle^{\frac{\gamma+2s+1}2} \frac{|f(v')-f(v)|^2}{\df(v,v')^{d+2s}} \,\ud v' \ud v \right) \\ &\qquad + M_b^2 \int \langle v \rangle^{\gamma+2s}  f(v)^2 \,\ud v \\
&\leq M_b^2 \|f\|_{N^{s,\gamma}}^2.
\end{align*}
The last inequality in Lemma \ref{l:commutator} follows from the observation that since for any function $f \in N^{s,\gamma}$,
\[ \|f\|_{L^2(\R^d,\langle v \rangle^{(\gamma+2s)})} \leq \|f\|_{N^{s,\gamma}},\]
then for any $g \in L^2(\R^d,\langle v \rangle^{-(\gamma+2s)})$,
\[ \|g\|_{N^{-s,\gamma}} \leq \|g\|_{L^2(\R^d,\langle v \rangle^{-(\gamma+2s)})}.\]
\end{proof}

\begin{lemma} \label{l:cutting-large-velocities}
Consider a smooth function $b : \R^d \to [0,1]$ so that $b(v)=1$ for $v \in B_1$ and $b(v)=0$ for $v \notin B_2$. Let $b_R(v) = b(v/R)$. Let $D = (0,T) \times \Omega \times \R^d$. For any function $f \in \Hk(D)$, the product $b_R(v)f(t,x,v)$ converges to $f$ in $\Hk(D)$ as $R \to \infty$.
\end{lemma}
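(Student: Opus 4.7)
The plan is to split $\|b_R f - f\|_{\Hk(D)}^2$ into its two constituent pieces: the $L^2_{t,x}N^{s,\gamma}_v$ norm of $b_R f - f$, and the $L^2_{t,x}N^{-s,\gamma}_v$ norm of $\trp(b_R f - f)$. A key simplification is that $b_R$ depends only on $v$, so that $\trp(b_R f) = b_R\trp f$, and the second piece reduces to the study of $(b_R - 1)\trp f$ in $L^2_{t,x}N^{-s,\gamma}_v$.

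First I would handle the $L^2_{t,x}N^{s,\gamma}_v$ convergence by decomposing
\[ (b_R f - f)(v) - (b_R f - f)(v') = (b_R(v) - 1)(f(v) - f(v')) + f(v')(b_R(v) - b_R(v')),\]
and splitting the $N^{s,\gamma}$ double integral accordingly. The contribution of the first term, together with the diagonal weighted $L^2$ part of the $N^{s,\gamma}$-norm, is handled by dominated convergence using $|b_R - 1| \leq 1$ and $b_R \to 1$ pointwise, with the integrand of $\|f\|_{N^{s,\gamma}}^2$ as dominating function. For the second term, the Lipschitz bound $|b_R(v) - b_R(v')| \leq \|Db\|_\infty |v-v'|/R \leq \|Db\|_\infty\df(v,v')/R$, together with the elementary computation $\int_{\df(v,v')\leq 1}\df^{2-d-2s}\,\ud v \lesssim \langle v' \rangle^{-1}$ and the fact that $\langle v \rangle \approx \langle v' \rangle$ on $\{\df \leq 1\}$, gives a pointwise-in-$(t,x)$ bound of order $R^{-2}\|f(t,x,\cdot)\|_{N^{s,\gamma}}^2$. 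A final application of dominated convergence in $(t,x)$ concludes this step.

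Next, for the dual piece, I would use Lemma \ref{lem:negative-norm} to represent $\trp f = \mathcal L g$ with $g \in L^2_{t,x}N^{s,\gamma}_v$ and $\|g\|_{L^2 N^{s,\gamma}} = \|\trp f\|_{L^2 N^{-s,\gamma}}$ (Riesz isometry). The key identity
\[ (b_R - 1)\mathcal L g = \mathcal L\bigl((b_R - 1)g\bigr) - \bigl(\mathcal L[b_R g] - b_R \mathcal L g\bigr),\]
combined with the Riesz isometry on the first piece and Lemma \ref{l:commutator} on the second, yields
\[ \|(b_R - 1)\trp f\|_{N^{-s,\gamma}} \leq \|(b_R - 1)g\|_{N^{s,\gamma}} + M_{b_R}\|g\|_{N^{s,\gamma}}.\]
The first summand goes to $0$ in $L^2_{t,x}$ by the previous step applied to $g$, so it remains to check that $M_{b_R} \to 0$ as $R \to \infty$.

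The verification that $M_{b_R} \to 0$ is where the main technical work lies. The quadratic piece of $M_{b_R}$ is easy: $|b_R(v) - b_R(v')|^2 \lesssim \df(v,v')^2/R^2$ combined with the volume estimate above gives $O(R^{-1})$. The linear piece is a singular integral in which naive modulus bounds fail once $2s \geq 1$. My plan is to Taylor-expand $b_R(v') - b_R(v) = \nabla b_R(v)\cdot(v'-v) + O(\|D^2b\|_\infty|v-v'|^2/R^2)$; the second-order remainder is absorbed as before, while the linear term is treated by symmetrization in $u = v' - v$. Although $\df(v, v+u)$ is not invariant under $u \mapsto -u$, a direct computation shows that the asymmetry is of order $|u|^3$, which recovers the extra power of $|u|$ missing from the naive bound. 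Together with the gain $|\nabla b_R| \leq \|Db\|_\infty/R$ this gives $M_{b_R} \lesssim R^{-1} \to 0$, completing the proof.
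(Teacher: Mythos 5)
Your proposal follows essentially the same route as the paper's proof: split the $\Hk(D)$ norm, treat the $L^2_{t,x}N^{s,\gamma}_v$ part of $(1-b_R)f$ by dominated convergence (with the Lipschitz bound on $b_R$ for the cross term), use $\trp(b_Rf)=b_R\trp f$, represent $\trp f=\mathcal L g$ via Lemma \ref{lem:negative-norm}, and conclude with the commutator estimate of Lemma \ref{l:commutator} together with $M_{b_R}\to 0$. Your only substantive addition is spelling out the cancellation behind the signed integral in $M_{b_R}$, which the paper merely asserts; there, note that the asymmetry $\df(v,v+u)^2-\df(v,v-u)^2=2(v\cdot u)\,|u|^2$ is controlled on the domain $\df\leq 1$ by $\lesssim \df^3$ since $|v\cdot u|\lesssim \df$ there (and the weight $\langle v'\rangle^{1/2}$ and the indicator $\one_{\df\leq 1}$ must be symmetrized as well), which removes the stray factor of $|v|$ implicit in your ``order $|u|^3$'' claim and still yields $M_{b_R}\lesssim R^{-1}\to 0$, which is all the lemma needs.
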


\begin{proof}
By definition, the norm $\|f\|_{\Hk(D)}$ consists of three terms
\begin{align*}
\|f\|_{\Hk(D)}^2 &= \iiint_{(0,T) \times \Omega \times \R^d} \langle v \rangle^{\gamma+2s} \abs{f(v)}^2 \ud v \ud x \ud t \\
&\phantom{=} + \iint_{(0,T) \times \Omega} \iint_{\df(v,v')<1} \langle v \rangle^{\frac{\gamma+2s+1}2} \langle v' \rangle^{\frac{\gamma+2s+1}2} \frac{|f(v')-f(v)|^2}{\df(v,v')^{d+2s}} \,\ud v' \ud v \ud x \ud t \\
&\phantom{=} + \iint_{(0,T) \times \Omega} \nm{\trp f}_{N^{-s,\gamma}}^2 \ud x \ud t.
\end{align*}
Applying the above formula to $\|f - b_R f\|_{\Hk}^2 = \|(1 - b_R) f\|_{\Hk}^2$, it is relatively easy to prove the convergence to zero of the first two terms using the dominated convergence theorem.

For the last term, we observe that $\trp (b_Rf) = b_R(v) \trp f$. Applying Lemma \ref{lem:negative-norm}, we know that there exists a function $F \in \Hk$ such that $\trp f = \mathcal{L}F$. Therefore, $\trp \left[(1-b_R)f\right] = (1-b_R) \mathcal{L}F$. We then use the commutator estimate of Lemma \ref{l:commutator} and have
\[ \nm{(1-b_R) \mathcal{L}F - \mathcal{L}\left[(1-b_R)F\right]}_{N^{-s,\gamma}} \leq M_{b_R} \|F\|_{N^{s,\gamma}}.\]
We observe that $M_{b_R} \to 0$ as $R \to \infty$. Indeed,
\begin{align*}
\left\vert \int_{\df(v,v')<1} \langle v \rangle^{\frac 12}\langle v' \rangle^{\frac 12} \frac{b_R(v')-b_R(v)}{\df(v,v')^{d+2s}} \,\ud v' \right\vert &\lesssim R^{-2},\\
 \left(\int_{\df(v,v')<1} \langle v \rangle^{\frac 12}\langle v' \rangle^{\frac 12} \frac{|b_R(v')-b_R(v)|^2}{\df(v,v')^{d+2s}} \,\ud v'\right)^{1/2} &\lesssim R^{-1}.
\end{align*}

In particular, $(1-b_R) \mathcal{L}F - \mathcal{L}\left[(1-b_R)F\right] \to 0$ in $N^{-s,\gamma}$. Finally, following the same analysis as above, we deduce that $(1-b_R)F \to 0$ in $N^{s,\gamma}$. Thus, $\mathcal{L}\left[(1-b_R)F\right] \to 0$ in $N^{-s,\gamma}$, which combined with the commutator estimate tells us that that $(1-b_R) \mathcal{L}F \to 0$ in $N^{-s,\gamma}$.
\end{proof}

\subsection{Galilean Group Operation and Convolution}

The Galilean group is the natural algebraic structure that is compatible with kinetic equations.
Let $\circ$ denote the Galilean group operation
\[(t_1,x_1,v_1) \circ (t_2,x_2,v_2) = (t_1+t_2, x_1+x_2+t_2 v_1, v_1+v_2).\]
This operation defines a non-commutative Lie group structure in $\R^{1+2d}$.
Note that the differential operators $\trp$, $\nabla_v$ and $\nabla_x$ are \emph{left}-invariant by the action of the group, and thus the Galilean translation is the correct group of transformations associated with this type of kinetic equations.


The convolution of functions is computed in terms of the Galilean group
\[ f \ast g (z) := \int_{\R^{1+2d}} f(w) g(w^{-1} \circ z) \,\ud w. \]
This convolution is associative, but it is not commutative. If we make the change of variables $w^{-1} \circ z \mapsto w$, we obtain the equivalent expression
\[ f \ast g (z) := \int_{\R^{1+2d}} f(z \circ w^{-1}) g(w) \,\ud w. \]
Note that $f \ast g$ is $C^\infty$ provided that at least one of the two functions is $C^\infty$.

Using convolutions in terms of the Galilean group with an appropriately scaled family of mollifiers provides a convenient way to approximate functions in $\Hk$ with smooth ones.
Let $\eta: \R^{1+2d} \to \R$ be a non-negative compactly-supported smooth function with integral one. 
We use kinetic scaling to produce a family of mollifiers:
\begin{align}\label{mollifier}
    \eta_\ep(t,x,v) = \ep^{-2s-(2+2s)d} \eta\left(\ep^{-2s} t, \ep^{-1-2s}x, \ep^{-1} v\right).
\end{align}
Given $f(t,x,v)$, we may consider the mollification
\[ f_{\ep}:=\eta_\ep \ast f. \]
This convolution commutes with the differential operators $\nabla_x$, $\nabla_v$, and $\trp$, as well as with any integro-differential operator in $v$, like $(-\Delta)_v^{s/2}$ or $(-\Delta)_v^{-s/2}$. To see this, observe that
\begin{align*}
\partial_{x_i} f(t,x,v) = \lim_{h \to 0} \frac{f((t,x,v)\circ(0,he_i,0)) - f(t,x,v)}h, \\
\partial_{v_i} f(t,x,v) = \lim_{h \to 0} \frac{f((t,x,v)\circ(0,0,he_i)) - f(t,x,v)}h, \\
\trp f(t,x,v) = \lim_{h \to 0} \frac{f((t,x,v)\circ(h,0,0)) - f(t,x,v)}h, \\
(-\Delta)_v^{s/2} f(t,x,v) = c_{s,d} \int_{\R^d} f((t,x,v) \circ (0,0,w)) |w|^{-d+s} \ud w.
\end{align*}
The point of these formulas is that they are all written in terms of translations of $f$ by the action of the Galilean group on the right, whereas the convolution $\eta \ast f$ is a weighted average of translations of $f$ by the action of the group on the left. Therefore, they trivially commute.

\begin{lemma} \label{l:density-of-smooth-in-Hk}
Let $\Omega$ be a bounded open set with a Lipschitz boundary and $D = (0,T)\times \Omega \times \R^d$. For any $\delta>0$, any function $f \in \Hk(D)$ can be approximated in $\Hk([\delta,T] \times \Omega \times \R^d)$ by smooth functions with compact support.
\end{lemma}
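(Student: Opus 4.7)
The plan is to approximate $f$ by smooth, compactly-supported functions in three successive steps: truncate in velocity, shift in $(t, x)$ to move the support strictly inside $D$, and then mollify with the Galilean mollifier $\eta_\ep$. By Lemma \ref{l:cutting-large-velocities}, we may assume $\supp f \subset \R^{1+d} \times B_R$ for some $R > 0$.

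Since $\partial \Omega$ is Lipschitz, there is a finite open cover $\{U_j\}_{j=0}^N$ of $\overline{\Omega}$ together with vectors $e_j \in \R^d$, with $U_0 \Subset \Omega$ and $e_0 = 0$, such that for each $j \geq 1$ and all small $h > 0$,
\[ (U_j \cap \overline{\Omega}) + h e_j \subset \Omega, \qquad \dist\left((U_j \cap \overline{\Omega}) + h e_j, \partial \Omega\right) \gtrsim h. \]
Let $\{\psi_j\}$ be a smooth partition of unity subordinate to $\{U_j\}$. For parameters $h, \tau \in (0, \delta/2)$ and a smooth cutoff $\chi$ in $t$ with $\chi \equiv 1$ on $[\delta, T]$ and $\supp \chi \subset (\tau/2, T + \tau/2)$, define
\[ F^{h,\tau}(t, x, v) := \chi(t) \sum_{j=0}^N (\psi_j f)(t - \tau, x + h e_j, v), \]
where each $\psi_j f$ is extended by zero outside $D$. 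Then $F^{h, \tau}$ has compact support in $\R^{1+2d}$ at distance $\gtrsim \min(h, \tau)$ from $\partial D$. On $[\delta, T] \times \Omega$, $\chi \equiv 1$ and $\chi' \equiv 0$, so $F^{h, \tau}$ reduces there to a sum of translates of $\psi_j f$, and $\trp F^{h, \tau}$ to a sum of translates of $\trp(\psi_j f)$ (using that $(t,x)$-translations commute with $\trp$). Standard continuity of translations in Bochner $L^2$ spaces valued in the separable Hilbert spaces $N^{s, \gamma}$ and $N^{-s, \gamma}$ therefore yields $F^{h, \tau} \to f$ in $\Hk([\delta, T] \times \Omega \times \R^d)$ as $h, \tau \to 0$.

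For fixed small $h, \tau$, set $F^{h, \tau}_\ep := \eta_\ep \ast F^{h, \tau}$. Since $F^{h, \tau}$ is compactly supported and $\eta_\ep \in C^\infty_c$, we have $F^{h, \tau}_\ep \in C^\infty_c(\R^{1+2d})$. The approximation-of-identity property yields $F^{h, \tau}_\ep \to F^{h, \tau}$ in $L^2_{t, x} N^{s, \gamma}_v$ as $\ep \to 0$. For the transport piece, recall from the preceding subsection that the Galilean convolution commutes with $\trp$ and also with the integro-differential operator $\mathcal{L}$ of Lemma \ref{lem:negative-norm} (both being right-invariant on the Galilean group). Writing $\trp F^{h, \tau} = \mathcal{L} H^{h, \tau}$ via Lemma \ref{lem:negative-norm},
\[ \trp F^{h, \tau}_\ep = \eta_\ep \ast \trp F^{h, \tau} = \mathcal{L}(\eta_\ep \ast H^{h, \tau}) \to \mathcal{L} H^{h, \tau} = \trp F^{h, \tau} \]
in $L^2_{t, x} N^{-s, \gamma}_v$, using the isometry $\mathcal{L} : N^{s, \gamma} \to N^{-s, \gamma}$. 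A diagonal sequence $(\ep_n, h_n, \tau_n) \to 0$ completes the proof.

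The main technical step is to verify carefully that Galilean convolution with $\eta_\ep$ is indeed an approximation of the identity in $L^2_{t, x} N^{s, \gamma}_v$. Unraveling the norm \eqref{eq:normNsg}, this reduces to continuity of right-translations on the weighted Hilbert space $N^{s, \gamma}$, which follows by density of $C^\infty_c(\R^d)$ in $N^{s, \gamma}$ together with dominated convergence applied to the finite-difference double integral. The anisotropic scales $\ep^{2s}, \ep^{1+2s}, \ep$ of $\eta_\ep$ all tend to zero together as $\ep \to 0$, so no single-scale obstruction arises.
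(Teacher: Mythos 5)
Your overall route—velocity truncation, a partition of unity with an inward shift of each boundary piece (plus a small backward shift in time), and then Galilean mollification—is genuinely different from the paper's, which never translates $f$ at all: the paper instead chooses a one-sided mollifier $\eta_\ep$ supported in $[0,\delta]\times C_1\times C_1$ for a cone $C_1$ adapted to the Lipschitz graph, so that $\eta_\ep \ast f$ on $[\delta,T]\times\Omega\times\R^d$ samples $f$ only inside $D$. Your shift-then-mollify scheme is viable in principle, but it contains a genuine error in the treatment of the transport term: the identity $\trp F^{h,\tau}_\ep=\eta_\ep\ast(\mathcal L H^{h,\tau})=\mathcal L(\eta_\ep\ast H^{h,\tau})$ is false. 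The Galilean convolution commutes only with operators built from right group translations, i.e.\ with $v$-translation-invariant operators such as $\nabla_v$, $\trp$ and $(-\Delta)_v^{\pm s/2}$; the operator $\mathcal L$ of Lemma \ref{lem:negative-norm} carries the $v$-dependent weights $\br{v}^{\gamma+2s}$, $\br{v}^{\frac{\gamma+2s+1}{2}}\br{v'}^{\frac{\gamma+2s+1}{2}}$ and the non-translation-invariant metric $\df(v,v')$ in its kernel, so it does not commute with the $v$-shifts inside $\eta_\ep\ast$. The paper states exactly this in the remark following the lemma. Consequently your argument does not establish $\trp F^{h,\tau}_\ep\to\trp F^{h,\tau}$ in $L^2_{t,x}N^{-s,\gamma}_v$. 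The fix is the paper's: after the velocity truncation the transport derivative is supported in a fixed ball in $v$, so by Lemma \ref{l:comparability-of-negative-sobolev-norms} the $N^{-s,\gamma}$ norm is comparable to $H^{-s}$, and one concludes from $(-\Delta)_v^{-s/2}\trp(\eta_\ep\ast F)=\eta_\ep\ast\bigl((-\Delta)_v^{-s/2}\trp F\bigr)$ and $L^2$ convergence of ordinary mollifications.

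Two further points are passed over too quickly, although they can be repaired. First, "standard continuity of translations" only gives convergence of the translates as functions; you still need that, on $[\delta,T]\times\Omega$, the distributional transport derivative of the shifted zero-extension of $\psi_j f$ coincides with the shift of $\trp(\psi_j f)$, i.e.\ that the jump set of the zero-extension is pushed to distance $\gtrsim h$ from $\overline\Omega$ by the choice of $e_j$ (and the time jump at $t=\tau$ stays at distance at least $\delta/2$ from $[\delta,T]$), so no surface measures enter. Second, in the diagonal step $\ep$ must be taken small depending on $h$, $\tau$ and $T$: the Galilean mollifier evaluates $F$ at $x-y-w(t-s)$, and the shear term $w(t-s)$ reaches a distance of order $\ep T$ in $x$ (not $\ep^{1+2s}$), so one needs $\ep T\ll h$ for $\trp(\eta_\ep\ast F^{h,\tau})$ on $[\delta,T]\times\Omega$ not to see the singular part of $\trp F^{h,\tau}$. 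With the $\mathcal L$-commutation step replaced as above and these supports tracked, your alternative proof goes through; what the paper's construction buys is precisely that no extension, shift, or jump-set bookkeeping is ever needed.
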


\begin{proof}
From Lemma \ref{l:cutting-large-velocities}, we see that functions with compact support are dense in $\Hk(D)$. It remains to prove that if $f \in \Hk(D)$ has compact support, then it can be approximated as the limit of a sequence of smooth functions with compact support.

The domain $\Omega$ has a Lipchitz boundary. Using a partition of unity corresponding to an appropriate covering of the boundary, we decompose the function $f$ as a sum of finitely many terms, each of them supported either away from $\partial \Omega$, or in some small ball where $\partial \Omega$ coincides with the graph of a Lipzhitz function.

The terms that are supported away from $\Omega$ are easily approximated by smooth functions using a standard convolution with respect to the Galilean group. The approximation of the terms whose support intersects $\partial \Omega$ requires some further explanation. For the rest of the proof, let us say that the function $f$ is one of these terms. Without loss of generality, let us assume that $f$ is supported in $t \in (0,T)$, $x \in \Omega \cap B_1$ and $v \in B_R$. Assume further that $\partial \Omega \cap B_2$ coincides the graph of some Lipchitz function. Since $f$ equals zero for $x \notin B_1$, it makes no difference at this point to assume that $\Omega$ is the supergraph of some global Lipschitz function.

The value of the convolution $[\eta_\ep \ast f](t,x,v)$ depends on the values of $f$ in certain neighborhood of $(t,x,v)$. When $x$ is very close to $\partial \Omega$, the value of $[\eta_\ep \ast f](t,x,v)$ will not be well determined as soon as this neighborhood contains points outside of $\Omega$. A natural workaround would be to first construct an extension operator from $\Hk((0,T)\times \Omega \times \R^d)$ to $\Hk((0,T)\times \R^d \times \R^d)$. However, such an extension is nontrivial. We follow a different idea by choosing special functions $\eta_\ep$ so that the value of $[\eta_\ep \ast f]$ depends on the values of $f$ in $D$ only.

We approximate $f$ by a convolution with a special function $\eta_\ep$ so that the values of $[\eta_\ep \ast f](t,x,v)$, for $t \in (\delta,T]$, $x \in B_1 \cap \Omega$, and $v \in \R^d$, depend only on the values of $f$ in $(0,T)\times \Omega \times \R^d$. Recall that
\[ [\eta_\ep \ast f](t,x,v) = \iiint \eta_\ep(s,y,w) f(t-s,x-y-w(t-s),v-w) \; \ud w \ud y \ud s.\]
We want that whenever $x \in \Omega \cap B_1$ and $t \in [\delta,T]$, the integrand is only nonzero for values of $s$, $y$ and $w$ so that $t-s \in (0,T)$ and $x-y-w(t-s) \in \Omega$.

We achieve our first goal by making $\eta_\ep$ supported in $s \in [0,\delta]$. For the second goal, we use that $\partial \Omega \cap B_2$ is the graph of a Lipschitz function. Because of the Lipschitz regularity of $\partial \Omega$, we know that $x - y - w(t-s) \in \Omega$ provided that $x \in \Omega$ and $y+w(t-s)$ belongs to certain cone of directions depending on $\partial \Omega$. Let $C_1 \subset \R^d$ be this cone. We want $\eta_\ep(s,y,w)$ to be supported in some subset of $[0,\delta] \times C_1 \times C_1$.

Let $\eta_1$ be some smooth function, with unit integral, whose support is inside $[0,\delta] \times C_1 \times C_1$. We set $\eta_\ep(s,y,w)$ in terms of $\eta_1$ as in \eqref{mollifier}. For any $\ep<1$, the support of $\eta_\ep$ is also contained in $[0,\delta]\times C_1 \times C_1$. Therefore, the values of $\eta_\ep \ast f$ in $[\delta,T] \times (\Omega \cap B_1) \times \R^d$ depend only on the values of $f$ in $(0,T) \times (\Omega \cap B_2) \times \R^d$.

At this point, we have a well defined one-parameter family of mollifications $f_\ep := \eta_\ep \ast f$. We must show that it converges to $f$ in $\Hk(D)$ as $\ep \to 0$. As explained initially, we can and do assume that $f$ is compactly supported.

Since $f$ is compactly supported in $|v| \leq R$, we also have that $f_\ep$ is supported in $|v| \leq R+\ep$. Combining the compact support of $f$ with Lemmas \ref{l:comparability-of-sobolev-norms} and \ref{l:comparability-of-negative-sobolev-norms}, we observe that it is enough to prove that
\begin{enumerate}
    \item[(a)]\[ f_\ep \to f \in L^2_{t,x}\left([\delta,T] \times (\Omega \cap B_1), H^s_v(\R^d)\right) \]
    \item[(b)]\[ \trp f_\ep \to \trp f \in L^2_{t,x}\left([\delta,T] \times (\Omega \cap B_1), H^{-s}_v(\R^d)\right).\]
\end{enumerate}

The first step to prove (a) is to verify that $f_\ep$ converges to $f$ in $L^2$. This is a standard consequence of the density of continuous functions in $L^2$.

In order to prove the convergence of $f_\ep$ to $f$ in $L^2_{t,x} H^s_v$ we use the well known fact that for any function $g : \R^d \to \R$,
\[ \|g\|_{H^s}^2 \approx \|g\|_{L^2}^2 + \|(-\Delta)^{s/2}g\|_{L^2}^2. \]
Moreover, $(-\Delta)_v^{s/2} [\eta_\ep \ast f] = \eta_\ep \ast \left( (-\Delta)_v^{s/2} f \right)$, which converges to $(-\Delta)_v^{s/2} f$ in $L^2$ as $\ep \to 0$.

To prove (b), we use the well known fact that for any function $g : \R^d \to \R$,
\[ \|g\|_{H^{-s}}^2 \approx \|(I-\Delta)^{-s/2}g\|_{L^2}^2. \]
Moreover, $(-\Delta)_v^{-s/2} \trp [\eta_\ep \ast f] = \eta_\ep \ast \left( (-\Delta)_v^{-s/2} \trp f \right)$, which converges to $(-\Delta)_v^{-s/2} \trp f$ in $L^2$ as $\ep \to 0$.
\end{proof}

\begin{remark}
It would be a bad idea to attempt a change variables to flatten the boundary of $\Omega$ for the proof of Lemma \ref{l:density-of-smooth-in-Hk}. Such a change of variables (see for example \cite{silvestre2022}) introduces an extra term to the transport operator, of the form $b(x,v) \cdot \nabla_v f$. In the case of $s=1$ (as in \cite{silvestre2022}), and for $C^{1,1}$ boundaries, this term is absorbed into the $L^2_{t,x} H^1_v$ norm (at least locally). In this paper we have $s < 1$ and there is no apparent way to control this extra term.
\end{remark}

\begin{remark}
Note that while $\eta \ast [(-\Delta)_v^{s/2} f] = (-\Delta)_v^{s/2} [\eta \ast f]$ and $\eta \ast [(-\Delta)_v^{-s/2} f] = (-\Delta)_v^{-s/2} [\eta \ast f]$, it is not true that $\mathcal L[\eta\ast f] = \eta \ast [\mathcal L f]$ for the operator $\mathcal L$ of Lemma \ref{lem:negative-norm}.
\end{remark}

\begin{lemma}[Smooth approximation] \label{lem:mollification}
Assume $f\in\Hk(D)$ satisfies the equation \eqref{eqn:boltzmann} in $D$ in the sense of distributions. There exists a sequence of smooth approximations $f_n$ so that $f_n \in C_c^\infty([1/n,T] \times \Omega \times \R^d)$ and
\begin{enumerate}
    \item[(i)]  $f_n$ converges to $f$ in $L^2_{t,x}N^{s,\gamma}_v(D)$ as $n \to \infty$.
    \item[(ii)] $\trp f_n$ converges to $\trp f$ in $L^2_{t,x}N^{-s,\gamma}_v(D)$.
    \item[(iii)] The function $f_n$ satisfies (classically) an equation in $D$ of the form
    \[ \trp f_\ep - Q_\ep = 0,\]
    where $Q_\ep \to \Qff$ in $L^2_{t,x}N^{-s,\gamma}_v(D)$ as $n \to \infty$.
\end{enumerate}
\end{lemma}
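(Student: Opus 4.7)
The plan is to take $f_n := \eta_{\ep_n} \ast f$, where $\eta_{\ep_n}$ is the specially-chosen Galilean mollifier from the proof of Lemma \ref{l:density-of-smooth-in-Hk}, with time support in $[0, 1/n]$, $(y,w)$-support adapted to the inward cone of a Lipschitz chart of $\partial\Omega$, and $\ep_n \to 0$. Before convolving I apply Lemma \ref{l:cutting-large-velocities} to reduce to the case where $f$ has compact support in $v$, say $\supp_v f \subset B_R$, and use a finite partition of unity on $\overline{\Omega}$ to split $f$ into pieces either supported away from $\partial\Omega$ (handled by a standard Galilean mollifier) or inside a boundary chart (handled by the special one). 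The resulting $f_n$ is smooth on $[1/n, T] \times \overline{\Omega} \times \R^d$ with compact support in $v$; any further time or tangential cutoffs needed to fit the formal $C_c^\infty$ statement can be added without affecting the convergences below.

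For (i) and (ii), Lemma \ref{l:density-of-smooth-in-Hk} applied with $\delta = 1/n$ yields $f_n \to f$ in $\Hk([1/n, T] \times \Omega \times \R^d)$. This upgrades to convergence on the full domain $D = (0, T) \times \Omega \times \R^d$ because
\[ \|f_n - f\|_{\Hk(D)}^2 \;\leq\; \|f_n - f\|_{\Hk([1/n, T] \times \Omega \times \R^d)}^2 + \|f\|_{\Hk((0, 1/n) \times \Omega \times \R^d)}^2, \]
and the second term vanishes by absolute continuity of the Bochner integrals defining the $\Hk$-norm.

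For (iii), the key observation is that Galilean convolution commutes with $\trp$. Since $f$ satisfies \eqref{eqn:boltzmann} distributionally, $\trp f = \Qff$, and so as a pointwise identity between smooth functions
\[ \trp f_n \;=\; \eta_{\ep_n} \ast (\trp f) \;=\; \eta_{\ep_n} \ast \Qff. \]
Setting $Q_n := \eta_{\ep_n} \ast \Qff$ makes (iii) automatic; it remains to prove $Q_n \to \Qff$ in $L^2_{t,x} N^{-s,\gamma}_v(D)$. Because the Boltzmann collision preserves momentum and energy and both $f$ and $\eta_{\ep_n}$ have compact $v$-support, $\Qff$ and all the $Q_n$ are supported in a fixed ball $B_{R'} \subset \R^d_v$. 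By \eqref{q-estimate} and Lemma \ref{l:comparability-of-negative-sobolev-norms} this places $\Qff \in L^2_{t,x} H^{-s}_v(D)$, and since Galilean convolution commutes with $(I - \Delta)_v^{-s/2}$ and preserves Lebesgue measure, the convergence $\eta_{\ep_n} \ast \Qff \to \Qff$ in $L^2_{t,x} H^{-s}_v(D)$ follows from the standard $L^2$ density of smooth functions. Lemma \ref{l:comparability-of-negative-sobolev-norms} then transfers this back to $L^2_{t,x} N^{-s,\gamma}_v(D)$.

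The main obstacle is this last step, convergence in the weighted negative-order space $N^{-s,\gamma}_v$. Mollification acts cleanly on $L^2$-type norms, but the weight and duality in $N^{-s,\gamma}_v$ require the detour through Lemma \ref{l:comparability-of-negative-sobolev-norms}; fortunately this exact reduction to $H^{-s}_v$ has already been carried out in the proof of Lemma \ref{l:density-of-smooth-in-Hk}, so it transfers to the present setting essentially verbatim.
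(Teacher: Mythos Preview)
Your construction and items (i)--(ii) match the paper exactly: invoke Lemma~\ref{l:density-of-smooth-in-Hk} (with the velocity cutoff from Lemma~\ref{l:cutting-large-velocities} and the boundary-adapted Galilean mollifiers) and handle the strip $(0,1/n)$ by absolute continuity.

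For (iii), however, you work harder than necessary. The paper simply \emph{defines} $Q_\ep := \trp f_\ep$ and observes that (iii) is then nothing more than (ii) restated: since $\trp f = \Qff$ holds distributionally, the convergence $\trp f_n \to \trp f$ in $L^2_{t,x} N^{-s,\gamma}_v$ from (ii) is literally $Q_\ep \to \Qff$. Your route---identifying $Q_n = \eta_{\ep_n}\ast\Qff$, invoking compact $v$-support of $\Qff$, and passing through $H^{-s}$ via Lemma~\ref{l:comparability-of-negative-sobolev-norms}---re-proves a special case of (ii) rather than using it. It is not wrong, but note a small wrinkle: after the velocity cutoff your $f_n$ is a mollification of $b_R f$, so $\trp f_n = \eta_{\ep_n}\ast(b_R\,\Qff)$ rather than $\eta_{\ep_n}\ast\Qff$; you would still need $b_R\,\Qff \to \Qff$ in $L^2_{t,x} N^{-s,\gamma}_v$, which is buried in the proof of Lemma~\ref{l:cutting-large-velocities}. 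The paper's one-line argument sidesteps all of this.
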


\begin{proof}
We construct the sequence $f_n$ using Lemma \ref{l:density-of-smooth-in-Hk}. The items (i) and (ii) are simply the convergence of $f_n$ to $f$ in $\Hk(D)$. For (iii), we set $Q_\ep := \trp f_\ep$ and use that $f$ satisfies the equation \eqref{eqn:boltzmann} in the sense of distributions.
\end{proof}

The following lemma provides a trace operator to a weighted $L^2$ space on the boundary $\BC$.
\begin{proposition} \label{p:trace}
The restriction operator $f \mapsto f|_{\BC}$ is well-defined from $\Hk(D)$ to $L^2_{loc}(\BC \cap \overline D,\omega)$ for the weight $\omega = \min\left\{|v \cdot n| , (v \cdot n)^2\right\}$. More precisely, for any $\widetilde \BC$ that is compactly contained in $\BC \cap \overline{D}$, there is a constant $C$ so that for all $f \in \Hk(D)$, we have
\[\int_{\widetilde \BC} f^2 \omega \,\ud \BC \leq C \|f\|^2_{\Hk(D)}.\]
Moreover, if $f_\ep \to f$ strongly in $L^2_{t,x} N_v^{s,\gamma}$ and $\trp f_\ep \to \trp f$ weakly in $L^2_{t,x} N_v^{-s,\gamma}$, then $f_\ep \to f$ strongly on $L^2_{loc}(\BC\cap \overline{D},\omega)$.
\end{proposition}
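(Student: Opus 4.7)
The approach is to derive an integration-by-parts identity for $f^2$ tested against a carefully chosen function, and then pass to the limit via Lemma \ref{lem:mollification}. The key idea is to engineer a test function $\phi$ such that $\phi(v \cdot \vn)$ is comparable to $\omega$ near $\BC$, while remaining regular enough to be a valid test function for the $\Hk$ structure.

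For a smooth function $f$ with compact support in $\R^{1+2d}$ and any test function $\phi$ compactly supported in $(0,T) \times \overline\Omega \times \R^d$, the identity $\trp(f^2) = 2 f \trp f$ together with integration by parts in $(t,x)$ (the vector field $(1,v)$ is divergence-free) gives
\begin{align*}
\int_{\BC} f^2 \phi\, (v \cdot \vn)\, \ud v\, \ud S_x\, \ud t = 2 \int_D f (\trp f)\, \phi\, \ud v\, \ud x\, \ud t + \int_D f^2\, \trp \phi\, \ud v\, \ud x\, \ud t - \Bigl[\int_{\Omega \times \R^d} f^2 \phi\, \ud v\, \ud x\Bigr]_{t=0}^{t=T}.
\end{align*}
To extract the weight $\omega$, fix a smooth odd bounded function $\psi : \R \to \R$ with $\psi(r) = r$ for $|r| \le 1/2$ and $\psi(r) = \operatorname{sgn}(r)$ for $|r| \ge 2$; it satisfies $c \min(|r|,r^2) \le r\psi(r) \le C \min(|r|,r^2)$. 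Using the $C^{1,1}$ regularity of $\partial \Omega$, extend $\vn$ to a Lipschitz vector field on a tubular neighborhood of $\partial \Omega$ (for instance as $-\nabla d$ for the signed distance $d$), and set $\phi(t,x,v) := \eta(t) \chi(x) \psi(v \cdot \vn(x)) \rho(v)$, where $\eta \in C^\infty_c(0,T)$, $\chi \in C^\infty_c(\R^d)$ is supported in the tubular neighborhood and equals $1$ on $\partial \Omega$, and $\rho \in C^\infty_c(\R^d)$ is a velocity cutoff, all chosen so that $\eta\chi\rho \equiv 1$ on $\widetilde{\BC}$.

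With this choice, the left-hand side is bounded below by $c \int_{\widetilde{\BC}} f^2 \omega$. The three terms on the right are controlled by $\|f\|_{\Hk}^2$: the first is the duality pairing of $\trp f \in L^2_{t,x} N^{-s,\gamma}_v$ against $f\phi \in L^2_{t,x} N^{s,\gamma}_v$, and multiplication by the Lipschitz compactly supported $\phi$ is bounded on $N^{s,\gamma}$; the second is $\lesssim \|\trp \phi\|_{L^\infty} \int_{\supp \phi} f^2 \lesssim \|f\|_{L^2_{t,x} N^{s,\gamma}_v}^2$ because $\langle v \rangle^{\gamma+2s}$ is bounded above and below on the compact $v$-support of $\phi$; and the time boundary term vanishes by the choice of $\eta$. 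This proves $\int_{\widetilde{\BC}} f^2 \omega \le C \|f\|_{\Hk}^2$ in the smooth case. To handle general $f \in \Hk(D)$, apply Lemma \ref{lem:mollification} to produce smooth approximations $f_n \to f$ strongly in $\Hk$; the bound applied to $f_n - f_m$ shows that $\{f_n|_{\widetilde{\BC}}\}$ is Cauchy in $L^2(\widetilde{\BC}, \omega)$, and we denote the limit $f|_{\BC}$. The identity then extends by continuity to all $h \in \Hk(D)$, since every term on both sides is continuous under $\Hk$ convergence.

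For the final convergence claim, apply the extended identity to $h = f_\ep - f \in \Hk$. The left-hand side is comparable to $\int_{\widetilde{\BC}} (f_\ep - f)^2 \omega$. On the right, the term $\int_D (f_\ep - f) \trp(f_\ep - f)\, \phi$ is the duality pairing $\langle \trp(f_\ep - f), (f_\ep - f)\phi\rangle$, which tends to $0$ because $\trp(f_\ep - f) \rightharpoonup 0$ weakly in $L^2_{t,x} N^{-s,\gamma}_v$ while $(f_\ep - f)\phi \to 0$ strongly in $L^2_{t,x} N^{s,\gamma}_v$ (weak–strong pairing); the term $\int_D (f_\ep - f)^2 \trp\phi$ tends to $0$ by strong $L^2$ convergence on the bounded $v$-support of $\phi$; the time boundary term vanishes. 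Hence $f_\ep \to f$ in $L^2_{\mathrm{loc}}(\BC, \omega)$. The main technical subtlety is the design of $\phi$: $\psi$ must be smooth across the grazing set $\{v \cdot \vn = 0\}$ so that $\phi(v \cdot \vn) \approx \omega$ with matching upper and lower bounds, and the $C^{1,1}$ hypothesis on $\partial\Omega$ is used precisely to give enough regularity on the extended $\vn$ for $\phi$ to act as a multiplier on $N^{s,\gamma}$.
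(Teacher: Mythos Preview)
Your proposal is correct and follows essentially the same approach as the paper: integrate $\trp(f^2\phi)$ over $D$ with a test function $\phi$ engineered so that $\phi(v\cdot\vn)\approx\omega$ on $\BC$, bound the interior terms by the $\Hk$ norm via the $N^{s,\gamma}/N^{-s,\gamma}$ duality and Lemma~\ref{lem:Nsgofproduct}, and extend to general $f$ through the smooth approximations of Lemma~\ref{lem:mollification}. The only cosmetic difference is that the paper splits the test into $\varphi_\pm=\min(1,(v\cdot\vn)_\pm)$ and handles $\BC_+$ and $\BC_-$ separately, whereas you use a single smooth odd $\psi$; your treatment of the final weak--strong convergence claim is in fact more explicit than the paper's.
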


\begin{proof}
The result is similar to  \cite[Proposition 4.3]{silvestre2022}. Based on Lemma \ref{lem:mollification}, it suffices to consider the case of $f$ smooth. Denote $\varphi_+$ and $\varphi_-$ to be the Lipchitz functions whose values on $\BC$ are given by
\[\varphi_+:=\min\left(1,(v\cdot n(x))_+\right),\qquad \varphi_-:=\min\left(1,(v\cdot n(x))_-\right).\]
We assume that $\p\Omega$ is Lipschitz. Therefore, these functions are Lipschitz on $\BC$. We extend them to the interior of $\Omega$ in any way that preserves their sign and Lipchitz norm. Note that, $\varphi_+-\varphi_-\geq0$ and $\approx\omega$ on $\BC$. 

Let $\eta$ be a smooth compactly supported function that equals one on $\widetilde\BC$. Then we have
\begin{align*}
    \iiint_{\BC}\varphi_+\eta f^2(v\cdot n)&=\int_{D}\trp(\varphi_+\eta f^2)\\
    &=\int_{D}\trp(\varphi_+\eta) f^2+\int_{D}2\varphi_+\eta f\trp f\\
    &\ls\lnm{\trp(\varphi_+\eta)}\tnm{f}^2+\nm{2\varphi_+\eta f}_{L^2_{t,x} N_v^{s,\gamma}}\nm{\trp f}_{L^2_{t,x} N_v^{-s,\gamma}}\\
    &\ls\nm{f}_{\Hk(D)}^2.
\end{align*}
Hence, the desired bound follows on $\BC_+$ part. A similar argument justifies the $\BC_-$ part when considering $-\trp(\varphi_-\eta f^2)$.
\end{proof}

\begin{remark}
The proof of Proposition \ref{p:trace} is the only place in this paper where we use the assumption that the boundary of $\Omega$ is $C^{1,1}$. A Lipschitz boundary suffices for the rest of the analysis.
\end{remark}

%

In the next few lemmas, we show that the definitions given before for the boundary condition in terms of test functions imply a more classical condition in terms of the trace operator given in Proposition \ref{p:trace}.

\begin{lemma} \label{lem:influx-trace}
Let $f$ be a weak solution to the Boltzmann equation \eqref{eqn:boltzmann} with in-flow boundary condition $g$ in the sense of Definition \ref{d:weak-sol-inflow}. Then $g = f|_{\BC_-}$, where $f|_{\BC_-}$ is the trace of $f$ as in Proposition \ref{p:trace}.
\end{lemma}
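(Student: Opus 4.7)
The plan is to compare the weak-formulation identity \eqref{weak-formulation-inflow} against the identity obtained from a genuine integration by parts, in which the boundary datum on $\BC_-$ is the trace $f|_{\BC_-}$ supplied by Proposition \ref{p:trace}; equating the two will force $g=f|_{\BC_-}$.

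Fix any test function $\varphi\in C_c^1((0,T)\times\overline\Omega\times\R^d)$ with $\varphi|_{\BC_+}=0$, and let $f_\ep$ be a smooth approximation of $f$ produced by Lemma \ref{l:density-of-smooth-in-Hk}, so that $f_\ep\to f$ in $L^2_{t,x}N_v^{s,\gamma}$, $\trp f_\ep\to\trp f$ in $L^2_{t,x}N_v^{-s,\gamma}$, and (by the continuity statement in Proposition \ref{p:trace}) the traces $f_\ep|_{\BC}\to f|_{\BC}$ in $L^2_{loc}(\BC,\omega)$ with $\omega=\min\{|v\cdot \vn|,(v\cdot \vn)^2\}$. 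For the smooth $f_\ep$, the classical divergence theorem applied to the vector field $(1,v)f_\ep\varphi$ yields
\[
\iiint_{D}\bigl\{f_\ep\,\trp\varphi+\trp f_\ep\cdot\varphi\bigr\}\,\ud v\ud x\ud t=\iiint_{\BC_-}f_\ep\,\varphi\,(v\cdot\vn)\,\ud v\ud S_x\ud t,
\]
the boundary integral being restricted to $\BC_-$ because $\varphi=0$ on $\BC_+$ and $v\cdot\vn=0$ on $\BC_0$.

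Passing to the limit $\ep\to0$ is routine: $\varphi$ and $\trp\varphi$ are bounded with compact support, hence lie in $L^2_{t,x}N_v^{s,\gamma}$ and pair continuously with the bulk convergences of $f_\ep$ and $\trp f_\ep$; on the boundary, since $(v\cdot\vn)^2/\omega\leq\max(1,|v\cdot\vn|)$ is bounded on $\supp\varphi$, the multiplier $\varphi(v\cdot\vn)$ is dual to the $L^2_{loc}(\BC,\omega)$ convergence of the traces. Using in addition the fact (noted in the remark following the weak-solution definitions) that $\trp f=\Qff$ holds distributionally in the interior, one arrives at
\[
\iiint_{D}\bigl\{f\,\trp\varphi+\Qff\,\varphi\bigr\}\,\ud v\ud x\ud t=\iiint_{\BC_-}f|_{\BC_-}\,\varphi\,(v\cdot\vn)\,\ud v\ud S_x\ud t.
\]
Subtracting this from \eqref{weak-formulation-inflow} gives
\[
\iiint_{\BC_-}\bigl(g-f|_{\BC_-}\bigr)\,\varphi\,(v\cdot\vn)\,\ud v\ud S_x\ud t=0
\]
for every admissible $\varphi$. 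Any $\psi\in C_c(\BC_-)$ can be extended to an admissible $\varphi$ by multiplying by a smooth cutoff in $v\cdot\vn$ that vanishes near $v\cdot\vn=0$, so varying $\varphi$ forces $g=f|_{\BC_-}$ almost everywhere with respect to $|v\cdot\vn|\,\ud v\ud S_x\ud t$.

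The main obstacle is the rigorous justification of the integration by parts for $f$ merely in $\Hk$; this rests on two ingredients already established in the paper: the approximation of $f$ by functions smooth up to the boundary (Lemma \ref{l:density-of-smooth-in-Hk}, whose construction is designed specifically to avoid loss at $\partial\Omega$) and the $L^2_{loc}(\BC,\omega)$ convergence of their traces (Proposition \ref{p:trace}). Once these are in hand, the rest of the argument is a clean subtraction and a density argument on test functions.
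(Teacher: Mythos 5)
Your proposal is correct and follows essentially the same route as the paper: approximate $f$ by smooth functions (Lemma \ref{lem:mollification}, built on Lemma \ref{l:density-of-smooth-in-Hk}), integrate by parts for the approximations, pass to the limit using the trace convergence of Proposition \ref{p:trace} together with the duality pairings in $N^{\pm s,\gamma}$, replace $\trp f$ by $\Qff$ using the interior distributional equation, and compare with \eqref{weak-formulation-inflow} over all admissible test functions to conclude $g=f|_{\BC_-}$. Your justification of the boundary-term limit via $(v\cdot\vn)^2/\omega\leq\max(1,|v\cdot\vn|)$ on $\supp\varphi$, and the explicit density argument at the end, are only slightly more detailed versions of the corresponding steps in the paper's proof.
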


\begin{proof}
When $f$ and $\varphi$ are both smooth functions, and $\varphi|_{\Gamma_+}=0$, integration by parts yields
\begin{align}\label{weak-BC-inflow=}
\iiint_{(0,T)\times\Omega\times\R^d} \left\{\trp f \,\varphi + f \,\trp \varphi \right\}\,\ud v\ud x\ud t=\iiint_{\BC_-} f \varphi \,(v \cdot \vn) \ud v\ud S_x\ud t.
\end{align}

According to Lemma \ref{lem:mollification}, we can approximate any function $f \in \Hk$ with a sequence of smooth functions $f_k \to f$ in $\Hk$, so that each $f_k$ satisfies the identity above.

From Proposition \ref{p:trace}, the boundary values $(f_k)|_{\BC_-}$ converge to $f|_{\BC_-}$ in $L^2_{loc}(\BC,\omega)$. Moreover, since $\varphi = 0$ on $\BC$, we must also have $|\varphi| \ls \omega$ on $\BC$. We can pass to the limit every term of the identity to get
\[ \iiint_{(0,T)\times\Omega\times\R^d} \left\{\trp f \,\varphi + f \,\trp \varphi \right\}\,\ud v\ud x\ud t=\iiint_{\BC_-} f|_{\BC_-} \varphi \,(v \cdot \vn) \ud v\ud S_x\ud t.
\]

When the function $f$ is a weak solution of the equation in the sense of Definition \ref{d:weak-sol-inflow}, we know in addition that $\trp f = \Qff$ in the sense of distributions. Replacing in the indentity above,
\[ \iiint_{(0,T)\times\Omega\times\R^d} \left\{ \Qff \,\varphi + f \,\trp \varphi \right\}\,\ud v\ud x\ud t=\iiint_{\BC_-} f|_{\BC_-} \varphi \,(v \cdot \vn) \ud v\ud S_x\ud t.
\]
This is the same as the identity in Definition \ref{d:weak-sol-inflow} but with $f|_{\BC_-}$ instead of $g$. Both identities hold for any test function $\varphi$, thus $f|_{\BC_-} = g$.
\end{proof}

The following two lemmas are proved by density in the same way as Lemma \ref{lem:influx-trace}.

\begin{lemma} \label{lem:influx-specular}
Let $f$ be a weak solution to the Boltzmann equation \eqref{eqn:boltzmann} with specular-reflection boundary condition in the sense of Definition \ref{d:weak-sol-specular}. Then $f|_{\BC}(t,x,v) = f|_{\BC}(t,x,\mathcal R_x v)$, where $f|_{\BC}$ is the trace of $f$ as in Proposition \ref{p:trace}.
\end{lemma}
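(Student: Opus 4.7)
The plan is to mirror the density argument of Lemma \ref{lem:influx-trace}, with the additional step of using the specular symmetry of the test functions to fold the boundary integral from $\BC_+$ onto $\BC_-$.

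For smooth $f$ and any $\varphi \in C^1_c((0,T) \times \overline\Omega \times \R^d)$ satisfying $\varphi(t,x,v) = \varphi(t,x, \mathcal R_x v)$ on $\partial \Omega$, integration by parts gives
\[ \iiint_{D} \{\trp f \cdot \varphi + f \cdot \trp \varphi\} \,\ud v \ud x \ud t = \iiint_{\BC} f \varphi (v \cdot n) \,\ud\BC. \]
The grazing set $\BC_0$ contributes nothing since $(v\cdot n) = 0$ there. On $\BC_+$ I would perform the change of variables $w = \mathcal R_x v$. Because $\mathcal R_x$ is an involutive isometry that maps $\BC_+$ onto $\BC_-$ with unit Jacobian and reverses the sign of $v \cdot n$, and because $\varphi(t,x, \mathcal R_x w) = \varphi(t,x, w)$, the $\BC_+$-integral equals $-\iiint_{\BC_-} f(t,x, \mathcal R_x w) \varphi(t,x,w)(w \cdot n)\,\ud \BC$. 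Combining both halves of $\BC$ yields
\[ \iiint_{D} \{\trp f \cdot \varphi + f \cdot \trp \varphi\} = \iiint_{\BC_-} [f(t,x,v) - f(t,x, \mathcal R_x v)] \varphi(t,x,v) (v \cdot n) \,\ud\BC. \]

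Next, I would extend this identity from smooth $f$ to arbitrary $f \in \Hk$ by invoking Lemma \ref{lem:mollification} for the interior convergence and Proposition \ref{p:trace} for the boundary convergence in $L^2_{loc}(\BC, \omega)$ (the bound $|\varphi (v\cdot n)|^2 \omega^{-1} \lesssim 1$ on the support of $\varphi$ justifies the limit via Cauchy--Schwarz). By Definition \ref{d:weak-sol-specular}, the left-hand side vanishes, so
\[ \iiint_{\BC_-} [f|_{\BC_-}(t,x,v) - f|_{\BC_+}(t,x, \mathcal R_x v)] \varphi(t,x,v) (v \cdot n) \,\ud\BC = 0 \]
for every admissible symmetric test function $\varphi$.

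Finally, I would show that $\varphi|_{\BC_-}$ can be prescribed freely: given any $\psi \in C^1_c(\BC_-)$ with $\supp \psi \subset \{v\cdot n \leq -\delta\}$, define $\varphi$ on $\BC$ by $\varphi|_{\BC_-} := \psi$ and $\varphi(t,x,v) := \psi(t,x, \mathcal R_x v)$ on $\BC_+$, then extend smoothly and with compact support into $D$ using a smooth retraction onto $\partial \Omega$ (available since $\partial \Omega$ is at least Lipschitz). This $\varphi$ is symmetric on $\partial \Omega$ and vanishes near $\BC_0$, so it is an admissible test function. Since $\psi$ ranges over a dense subset of $L^2_{loc}(\BC_-, |v\cdot n| \ud \BC)$, the vanishing identity forces $f|_{\BC_-}(t,x,v) = f|_{\BC_+}(t,x, \mathcal R_x v)$ almost everywhere on $\BC_-$, which is equivalent to the claim $f|_{\BC}(t,x,v) = f|_{\BC}(t,x, \mathcal R_x v)$ on $\BC$. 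The main obstacle is the last step: constructing a rich enough family of \emph{symmetric} test functions that smoothly cover $\overline{D}$ without singularity at the grazing set $\BC_0$; this is precisely why we restrict to $\psi$ supported away from $\BC_0$, whose vanishing $(v\cdot n)$-measure makes this restriction costless.
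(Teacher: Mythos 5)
Your overall route is exactly the paper's: the paper proves this lemma ``by density in the same way as Lemma \ref{lem:influx-trace}'', i.e.\ smooth approximation of $f$ via Lemma \ref{lem:mollification}, integration by parts for the approximations, passage to the limit in the boundary term via Proposition \ref{p:trace}, substitution of $\trp f=\Qff$, and comparison with the weak formulation of Definition \ref{d:weak-sol-specular}. Your folding of the boundary integral from $\BC_+$ onto $\BC_-$ through the change of variables $w=\mathcal R_x v$ (unit Jacobian, sign flip of $v\cdot n$, symmetry of $\varphi$) and your remark that $|\varphi\,(v\cdot n)|^2\omega^{-1}\ls 1$ on the compact support of $\varphi$ justifies the limit are correct, and simply make explicit what the paper leaves implicit.

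The step that is not justified as written is the final construction of admissible symmetric test functions. First, a merely Lipschitz boundary does not admit a smooth retraction (the nearest-point projection need not even be single-valued), so the parenthetical ``available since $\partial\Omega$ is at least Lipschitz'' is wrong; what the paper's standing $C^{1,1}$ assumption provides is a Lipschitz projection and a Lipschitz normal $n$. Second, and more seriously, because $n$ is only Lipschitz, the datum you prescribe on $\BC_+$, namely $(t,x,v)\mapsto \psi(t,x,\mathcal R_x v)$, is only Lipschitz in $x$; hence in general it cannot be the boundary restriction of a function $\varphi\in C^1_c\left((0,T)\times\overline\Omega\times\R^d\right)$, and your extension need not satisfy the exact identity $\varphi(t,x,v)=\varphi(t,x,\mathcal R_x v)$ on $\partial\Omega$ required by Definition \ref{d:weak-sol-specular}. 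Note the contrast with the in-flow case: there admissibility is merely a support/vanishing condition on $\BC_+$, so bump functions supported in the open set $\{v\cdot n<0\}$ give all the richness needed; here admissibility is a functional identity tying $\varphi$ to $n(x)$, so the regularity of $n$ cannot be sidestepped so casually. This point is glossed over in the paper's one-line proof as well, but your write-up asserts a construction that, as stated, does not produce an admissible test function; closing it requires an extra argument (for instance, allowing test functions Lipschitz in $x$ and justifying that the weak formulation and the limiting identity extend to them, or smoothing $n$ and controlling the resulting symmetry defect), none of which is immediate from the definitions.
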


\begin{lemma} \label{lem:influx-bounce-back}
Let $f$ be a weak solution to the Boltzmann equation \eqref{eqn:boltzmann} with bounce-back boundary condition in the sense of Definition \ref{d:weak-sol-bounce-back}. Then $f|_{\BC}(t,x,v) = f|_{\BC}(t,x,-v)$, where $f|_{\BC}$ is the trace of $f$ as in Proposition~\ref{p:trace}.
\end{lemma}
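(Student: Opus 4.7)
The plan is to follow the density argument of Lemma \ref{lem:influx-trace}, but with an additional symmetrization step $v \mapsto -v$ on the boundary integral that exploits the symmetry constraint on admissible test functions in Definition \ref{d:weak-sol-bounce-back}. I would first establish the relevant boundary identity for smooth $f$ via integration by parts, and then pass to the limit for general $f \in \Hk$ using the mollification scheme of Lemma \ref{lem:mollification} together with the trace continuity of Proposition \ref{p:trace}.

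Concretely, for smooth $f$ and $\varphi \in C^1_c((0,T) \times \overline\Omega \times \R^d)$ satisfying $\varphi(t,x,v) = \varphi(t,x,-v)$ on $\partial\Omega$, integration by parts yields
\[\iiint_{(0,T)\times\Omega\times\R^d}\{\trp f\,\varphi + f\,\trp\varphi\}\,\ud v\ud x\ud t = \iiint_\BC f\varphi\,(v\cdot n)\,\ud v\ud S_x\ud t.\]
Splitting the right-hand side into its $\BC_+$ and $\BC_-$ pieces, applying $v\mapsto -v$ to the $\BC_-$ piece, and using the symmetry of $\varphi$ on $\partial\Omega$ collapses it to
\[\iiint_{\BC_+}\bigl[f(t,x,v) - f(t,x,-v)\bigr]\varphi(t,x,v)(v\cdot n)\,\ud v\ud S_x\ud t.\]
For a general $f \in \Hk$, I would approximate by the smooth sequence $f_n$ from Lemma \ref{lem:mollification} with $\trp f_n = Q_n \to Q(f,f)$, pass to the limit on the left-hand side to obtain $\iiint\{Q(f,f)\,\varphi + f\,\trp\varphi\}$ (which vanishes by Definition \ref{d:weak-sol-bounce-back}), and on the right-hand side use trace convergence from Proposition \ref{p:trace}, arriving at
\[\iiint_{\BC_+}\bigl[f|_\BC(t,x,v) - f|_\BC(t,x,-v)\bigr]\varphi(t,x,v)(v\cdot n)\,\ud v\ud S_x\ud t = 0\]
for every admissible $\varphi$. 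Since any $C^1_c$ function on $\BC_+$ (supported away from $\BC_0$) extends to such a $\varphi$ by setting $\varphi(t,x,-v) := \varphi(t,x,v)$ on $\BC_-$ and then smoothly into $\Omega$, a density argument delivers $f|_\BC(t,x,v) = f|_\BC(t,x,-v)$ almost everywhere on $\BC_+$; the identity on $\BC_-$ then follows from the $v \leftrightarrow -v$ symmetry.

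The delicate point is the limit passage in the boundary integral, since Proposition \ref{p:trace} only provides strong convergence in $L^2_{loc}(\BC,\omega)$ for the degenerate weight $\omega = \min\{|v\cdot n|,(v\cdot n)^2\}$, whereas the integrand carries the stronger weight $|v\cdot n|$. This is handled by a Cauchy--Schwarz estimate on the compact support of $\varphi$, where $(v\cdot n)^2/\omega = \max\{|v\cdot n|,1\}$ is bounded, so that $\iiint_\BC(f_n - f)\varphi(v\cdot n)\,\ud\BC$ is controlled by $\|f_n - f\|_{L^2(\widetilde\BC,\omega)}$ up to a constant depending on $\varphi$, and therefore tends to zero.
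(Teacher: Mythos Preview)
Your proposal is correct and follows essentially the same density approach the paper indicates (the paper merely states that this lemma is ``proved by density in the same way as Lemma~\ref{lem:influx-trace}'' without giving details). Your symmetrization $v\mapsto -v$ is exactly the adaptation needed, and your explicit Cauchy--Schwarz treatment of the weight mismatch---noting that $(v\cdot n)^2/\omega=\max\{|v\cdot n|,1\}$ is bounded on the compact support of $\varphi$---is the right way to justify the boundary limit here, since bounce-back test functions need not vanish on $\BC_0$ (unlike the in-flow case, where $\varphi|_{\BC_+}=0$ forces $|\varphi|\lesssim|v\cdot n|$ on $\BC$ and hence $|\varphi(v\cdot n)|\lesssim\omega$ directly).
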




\section{Chain Rule for Weak Solutions} 

In order to establish inequalities for the truncated energy dissipation of the function $f$, we want to compute the equation satisfied by composed functions of the form $\psi(t,f) = \left(f - a(t)\right)_+^2$. Starting from the definitions we have given for the equation in the sense of distributions, for each type of boundary condition, we verify some form of the chain rule in this section.

In the following, let $\psi(t,y)$ be a generic function of $t$ and $y$. In the case of interest, it is Lipschitz with respect to $t$, differentiable with respect to $y$, and $\partial_y \psi$ is locally Lipschitz. We use $\psi_y$ to denote the derivative of $\psi$ with respect to $y$, and $\psi_t$ the derivative with respect to $t$. The objective of this section is to express the equation satisfied by $\psi(t,f)$ in the sense of distributions. While $\psi_y$ is locally Lipschitz, it is possible that it becomes unbounded for large values of $y$. Aiming at $\psi(t,f) = \left(f - a(t)\right)_+^2$, with $a(t)$ Lipschitz, we may assume that $\psi_{yy}$ and $\psi_{ty}$ are globally bounded. For technical reasons, it is convenient to start our analysis with functions $\psi$ so that $\psi_t$, $\psi_y$, $\psi_{ty}$ and $\psi_{yy}$ are all globally bounded. We will later approximate the case of interest by truncation.

\begin{lemma} \label{lem:weak-sol-common}
Let $f \in \Hk(D)$ be a weak solution of \eqref{eqn:boltzmann} in $D$. Let $g \in L^2(\BC,\omega)$ be the trace of $f$ in the sense of Proposition \ref{p:trace}. Assume that $\psi$ and $\psi_y$ are Lipschitz with $\psi_t$, $\psi_y$, $\psi_{ty}$ and $\psi_{yy}$ globally bounded.
Then, for any test function $\varphi \in C^1_c\left((0,T)\times \overline \Omega \times \R^d\right)$, so that $\varphi=0$ on $\BC_0$, we have
\begin{align}\label{eq:chain-common}
\iiint_{(0,T) \times \Omega\times\R^d} \left\{\psi(t,f) \, \trp \varphi + \psi_y(t,f)\Qff \varphi+\psi_t(t,f)\varphi \right\}\,\ud v\ud x\ud t \\
=\iiint_{\BC} \psi(t,g) \varphi \,(v \cdot \vn) \, \ud v\ud S_x\ud t. \no
\end{align}
\end{lemma}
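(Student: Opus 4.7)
The plan is to establish the identity via smooth approximation, handling the nonlinear pairing with $Q_n$ by a duality argument rather than any delicate composition estimate. By Lemma \ref{lem:mollification}, I select a sequence of smooth approximations $\{f_n\}$ such that $f_n \to f$ in $L^2_{t,x} N^{s,\gamma}_v(D)$ and $\trp f_n = Q_n \to \Qff$ in $L^2_{t,x} N^{-s,\gamma}_v(D)$, and Proposition \ref{p:trace} gives $f_n|_{\BC} \to g$ in $L^2_{loc}(\BC, \omega)$. For each smooth $f_n$, the pointwise chain rule yields $\trp[\psi(t, f_n)] = \psi_t(t, f_n) + \psi_y(t, f_n) Q_n$ in $D$. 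Multiplying by $\varphi$ and integrating by parts (with no time-boundary contribution since $\varphi$ is compactly supported in $(0,T)$, and the spatial boundary producing the integral over $\BC$), I obtain
\begin{align*}
\iiint_D \bigl\{ \psi(t, f_n)\,\trp\varphi + \psi_y(t, f_n)\,Q_n\,\varphi + \psi_t(t, f_n)\,\varphi \bigr\}\,\ud v\ud x\ud t = \iiint_\BC \psi(t, f_n|_\BC)\,\varphi\,(v \cdot \vn)\,\ud v\ud S_x\ud t
\end{align*}
for each $n$, and it remains to pass to the limit.

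For the terms $\iiint \psi(t, f_n)\trp\varphi$ and $\iiint \psi_t(t, f_n)\varphi$, the global bounds on $\psi_y$ and $\psi_{ty}$ make $y \mapsto \psi(t, y)$ and $y \mapsto \psi_t(t, y)$ uniformly Lipschitz, so the $L^2_{loc}$ convergence $f_n \to f$ on the compact support of $\varphi$ transfers to $L^2_{loc}$ convergence of the compositions. For the boundary integral, $\varphi \in C^1$ vanishing on $\BC_0$ gives $|\varphi\,(v \cdot \vn)| \lesssim \omega$ on its compact support in $\BC$; combined with $|\psi(t, f_n|_\BC) - \psi(t, g)| \le \|\psi_y\|_\infty |f_n|_\BC - g|$, a Cauchy--Schwarz estimate bounds the difference by a constant times $\|f_n|_\BC - g\|_{L^2(\widetilde\BC, \omega)}$, which tends to zero by the trace convergence.

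The main obstacle is the pairing $\iiint \psi_y(t, f_n) Q_n \varphi$, which couples a varying factor in $N^{s,\gamma}_v$ with a varying factor in $N^{-s,\gamma}_v$. Setting $u_n := \psi_y(t, f_n)\varphi$ and $u := \psi_y(t, f)\varphi$, the Lipschitz property of $\psi_y$ (with constant $\|\psi_{yy}\|_\infty$) and the compact $v$-support of $\varphi$, together with Lemma \ref{l:comparability-of-sobolev-norms}, give a uniform bound $\|u_n\|_{L^2_{t,x} N^{s,\gamma}_v} \lesssim \|f_n\|_{L^2_{t,x} N^{s,\gamma}_v}$. I then split
\begin{align*}
\iiint u_n Q_n - u\,\Qff = \iiint u_n (Q_n - \Qff) + \iiint (u_n - u)\,\Qff.
\end{align*}
The first integral tends to zero by $\|u_n\|_{L^2_{t,x}N^{s,\gamma}_v}\,\|Q_n - \Qff\|_{L^2_{t,x}N^{-s,\gamma}_v} \to 0$. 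For the second, since $\Qff$ is a fixed element of $L^2_{t,x}N^{-s,\gamma}_v$, it suffices to show $u_n \rightharpoonup u$ weakly in $L^2_{t,x} N^{s,\gamma}_v$: the sequence is bounded there, and $u_n \to u$ strongly in $L^2_{t,x,v}$ by the Lipschitz property of $\psi_y$ together with the $L^2$ convergence of $f_n \to f$, which pins down the weak limit. This duality step avoids having to prove strong Nemytskii continuity of $\psi_y$ in $H^s_v$, and assembling the four terms closes the proof.
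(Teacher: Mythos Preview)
Your proof is correct and follows essentially the same approach as the paper: approximate via Lemma~\ref{lem:mollification}, apply the pointwise chain rule to the smooth $f_n$, integrate by parts, and pass to the limit term by term, handling the $Q_n$ term by the weak--strong pairing (bounded $u_n$ in $L^2_{t,x}N^{s,\gamma}_v$ converging weakly, against $Q_n\to Q(f,f)$ strongly in the dual). Your explicit splitting $\iiint u_n Q_n - u\,Q(f,f) = \iiint u_n(Q_n-Q(f,f)) + \iiint(u_n-u)Q(f,f)$ and your choice to work with $u_n=\psi_y(t,f_n)\varphi$ (exploiting the compact $v$-support of $\varphi$ via Lemma~\ref{l:comparability-of-sobolev-norms}) are slightly more explicit than the paper's presentation, but the argument is the same.
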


\begin{proof}
We apply Lemma \ref{lem:mollification}. We have that the smooth approximate function $f_\ep$ satisfies (classically)
\[ \trp f_\ep - Q_\ep = 0 \quad\text{in } D.\]
Multiplying the equation with $\psi_y(t,f_\ep)$, we get
\[ \trp \psi(t,f_\ep) - \psi_y(t,f_\ep) Q_\ep - \psi_t(t,f_\ep) = 0 \quad\text{in } D.\]

Multiplying the equation by $\varphi$ and integrating the transport term by parts we get

\begin{equation} \label{eq:eq-for-fep}
\iiint_D \psi(t,f_\ep) \trp \varphi + \psi_y(t,f_\ep) Q_\ep \varphi + \psi_t(t,f_\ep) \varphi \, \ud v \ud x \ud t \\
= \iiint_{\BC} \psi(t,f_\ep) (v\cdot n) \varphi \, \ud v \ud S_x \ud t.
\end{equation}

Let us assume initially that $\psi(t,y)$ is globally Lipschitz with respect to $y$. We will get rid of this assumption later on. Moreover, we recall that $\psi \in C^{1,1}$. Its second derivatives are bounded.

Since $\psi$ is globally Lipschitz, and $f_\ep \to f$ in $L^2(D)$ (and in particular in $L^1_{loc}$), we see that the first term converges for any fixed test function $\varphi$.
\[ \iiint \psi(t,f_\ep) \trp \varphi \, \ud v \ud x \ud t \:\to \iiint \psi(t,f) \trp \varphi \, \ud v \ud x \ud t \quad\text{as } \ep \to 0.\]

Since $\psi \in C^{1,1}$, we observe that $\psi_y$ is globally Lipschitz. Since $f_\ep \to f$ in $L^2_{t,x} N^{s,\gamma}_v$ and $\psi_y$ is Lipschitz in $y$, we see that $\psi(t,f_\ep)$ is bounded uniformly in $L^2_{t,x} N^{s,\gamma}_v$. Since it clearly converges in $L^2_{loc}$, then it must converge at least weakly in $L^2_{t,x} N^{s,\gamma}_v$.

Since $Q_\ep \to \Qff$ in $L^2_{x,v} N^{-s,\gamma}_v$, then we deduce the convergence of the second term
\[ \iiint \psi_y(t,f_\ep) Q_\ep \varphi \, \ud v \ud x \ud t \to \iiint \psi_y(t,f) \Qff \varphi \, \ud v \ud x \ud t\]


For the next term, we use that $\psi_{ty}$ is bounded, so that $\psi_t(t,f_\ep) \to \psi_t(t,f)$ in $L^2_{loc}$ and
\[ \iiint \psi_t(t,f_\ep) \varphi \, \ud v \ud x \ud t \:\to \iiint \psi_t(t,f) \varphi \, \ud v \ud x \ud t.\]

Since $\varphi \in C^1$ and $\varphi=0$ on $\BC_0$, we have that $|\varphi(t,x,v)| \lesssim d((x,v), \BC_0)$. In particular, $|\varphi(t,x,v) (v\cdot n)| \lesssim \omega$ on $\BC$. For the boundary term, we use that $(f_\ep)|_{\BC} \to g$ in $L^2(\BC,\omega)$. Since $\psi$ is globally Lipschitz, we deduce that $\psi(t,f_\ep) \to \psi(t,g)$ in $L^2(\BC,\omega)$. Therefore
\[ \iiint_\BC \psi(t,f_\ep) (v\cdot n) \varphi \, \ud v \ud S_x \ud t \to \iiint_\BC \psi(t,g) (v\cdot n) \varphi \, \ud v \ud S_x \ud t.\]

We pass to the limit every term in \eqref{eq:eq-for-fep} and finish the proof.
\end{proof}

Lemma \ref{lem:weak-sol-common} already suffices to establish an equality for weak solutions with the inflow boundary conditions. In this case, we normally use test functions $\varphi$ that vanish on $\BC_+$. In particular, they vanish on $\BC_0$ as well.

\begin{corollary}[Restricted chain rule for weak solutions with in-flow/diffusive-reflection boundary] \label{cor:chain-weak-sol-inflow}
Let $f$ be a weak solution of \eqref{eqn:boltzmann} in $D$ satisfying the in-flow boundary condition. Assume that $\psi$ and $\psi_y$ are Lipschitz with $\psi_t$, $\psi_y$, $\psi_{ty}$ and $\psi_{yy}$ globally bounded. Then for any test function $\varphi \in C^1_c\left((0,T)\times\overline\Omega\times\R^d\right)$ so that $\varphi|_{\BC_+}=0$, we have
\begin{align}\label{eq:chain-weak-sol-inflow}
\iiint_{(0,T) \times \Omega\times\R^d} &\left\{\psi(t,f) \, \trp \varphi + \psi_y(t,f) \Qff \varphi+\psi_t(t,f)\varphi \right\} \,\ud v\ud x\ud t \\
&=\iiint_{\BC_-} \psi(t,f|_{\BC_-}) \varphi \, (v \cdot \vn) \ud v \ud S_x \ud t. \no
\end{align}
\end{corollary}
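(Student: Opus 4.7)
The plan is to deduce this corollary directly from Lemma~\ref{lem:weak-sol-common} via a short continuity-and-support argument; no new analytic input is needed. The two differences between the two statements are (i) Lemma~\ref{lem:weak-sol-common} requires $\varphi=0$ on the grazing set $\BC_0$, while here only $\varphi|_{\BC_+}=0$ is assumed, and (ii) the boundary integral in Lemma~\ref{lem:weak-sol-common} is over all of $\BC$, whereas in \eqref{eq:chain-weak-sol-inflow} it is over $\BC_-$ only. Both discrepancies will collapse together once one observes that under our hypothesis $\varphi$ must in fact vanish on the whole set $\BC_+\cup\BC_0$.

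First, I would check that any $\varphi\in C^1_c((0,T)\times\overline\Omega\times\R^d)$ with $\varphi|_{\BC_+}=0$ automatically vanishes on $\BC_0$. Given $(t_0,x_0,v_0)\in\BC_0$, i.e.\ $v_0\cdot \vn(x_0)=0$, the perturbed point $(t_0,x_0,v_0+\ep\vn(x_0))$ lies in $\BC_+$ for every $\ep>0$, since $(v_0+\ep\vn(x_0))\cdot\vn(x_0)=\ep>0$. Continuity of $\varphi$ then forces $\varphi(t_0,x_0,v_0)=\lim_{\ep\to 0^+}\varphi(t_0,x_0,v_0+\ep\vn(x_0))=0$. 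Hence the hypotheses of Lemma~\ref{lem:weak-sol-common} are satisfied, and applying it produces \eqref{eq:chain-common} with the full boundary integral over $\BC$ on the right-hand side.

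To finish, I would split the right-hand side $\iiint_{\BC}\psi(t,f|_{\BC})\varphi\,(v\cdot\vn)\,\ud v\ud S_x\ud t$ as a sum over $\BC_-$, $\BC_0$, and $\BC_+$. The $\BC_+$ piece vanishes because $\varphi=0$ there by hypothesis; the $\BC_0$ piece vanishes because $v\cdot\vn=0$ on that set; only the $\BC_-$ contribution survives, which is exactly the right-hand side of \eqref{eq:chain-weak-sol-inflow}.

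No step here presents a genuine obstacle: the whole argument is a two-line reduction to Lemma~\ref{lem:weak-sol-common} once the continuity observation of the first paragraph is made. It is worth noting that the proof does not invoke the prescribed in-flow datum $g$ at all: the identity is formulated in terms of the intrinsic trace $f|_{\BC_-}$ supplied by Proposition~\ref{p:trace}, and the identification $g=f|_{\BC_-}$ from Lemma~\ref{lem:influx-trace} is only needed if one later wishes to rewrite the right-hand side in terms of $g$.
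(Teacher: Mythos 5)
Your proposal is correct and follows exactly the paper's own argument: continuity plus $\varphi|_{\BC_+}=0$ forces $\varphi=0$ on $\BC_0$, so Lemma~\ref{lem:weak-sol-common} applies, and the boundary integral reduces to $\BC_-$ since $\varphi$ kills the $\BC_+$ part and $v\cdot \vn=0$ kills the $\BC_0$ part. Your write-up merely spells out the limiting argument and the splitting of the boundary integral that the paper leaves implicit.
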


\begin{proof}
The test function $\varphi$ vanishes on $\BC_+$ and is continuous. Therefore, it vanishes on $\BC_0$ and Lemma~\ref{lem:weak-sol-common} applies directly.
\end{proof}

\begin{lemma} \label{lem:normofeta}
Let $\dist$ represent the Euclidean distance in $\R^{1+2d}$. Let $\chi:[0,\infty) \to [0,1]$ be a smooth function so that $\chi(w) = 1$ if $w \in [0,1]$ and $\chi(w)=0$ if $w \geq 2$.
For any large radius $R>0$ and $\ep \in (0,1)$, denote
\begin{align} \label{eq:eta}
   \eta_{\ep}(z):=\left(1 - \ep^{-1}\dist(z,\BC_0)\right)_+ \chi(R^{-1}|v|).
\end{align}
Then, we have
\[ \|\eta_\ep(t,x,\cdot) \|_{N^{s,\gamma}} \leq \begin{cases}
0 & \text{if } \dist(x,\partial \Omega) \geq \ep, \\
C(R) \ep^{1/2-s} &\text{otherwise}.
\end{cases}
\]
Here $C(R)$ is a constant depending on $d$, $\gamma$, $s$ and $R$.
\end{lemma}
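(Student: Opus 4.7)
The first case is immediate: since $\BC_0 \subset (0,T) \times \p\Omega \times \R^d$, for every $z = (t,x,v)$ we have $\dist(z, \BC_0) \geq \dist(x, \p\Omega)$, so if $\dist(x,\p\Omega) \geq \ep$ the factor $\left(1 - \ep^{-1}\dist(z, \BC_0)\right)_+$ vanishes identically in $v$. Assume from now on that $d_x := \dist(x,\p\Omega) < \ep$. My plan for this case is to (i) localize $\supp \eta_\ep(t,x,\cdot)$ inside a thin slab in $v$, (ii) reduce the weighted norm $\|\cdot\|_{N^{s,\gamma}}$ to the standard $\|\cdot\|_{H^s(\R^d)}$ via Lemma \ref{l:comparability-of-sobolev-norms}, and (iii) estimate the $H^s$ norm by interpolating between the $L^{\infty}$ bound $|\eta_\ep| \leq 1$ and the Lipschitz regularity of $\eta_\ep$ in $v$.

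For step (i), pick $x_0 \in \p\Omega$ realizing $d_x$ with outward normal $n_0 := n(x_0)$. If $\eta_\ep(t,x,v) \neq 0$, then $|v| \leq 2R$ and some $(t,x',v')$ satisfies $x' \in \p\Omega$, $v' \cdot n(x') = 0$, and $|x-x'|^2 + |v-v'|^2 < \ep^2$. Then $|x'-x_0| \leq |x'-x| + d_x < 2\ep$, so the $C^{1,1}$ regularity of $\p\Omega$ gives $|n(x') - n_0| \ls \ep$ and
\[ |v \cdot n_0| \leq |v-v'| + |v'|\,|n(x') - n_0| + |v' \cdot n(x')| \leq C(R)\,\ep. \]
Hence $\supp \eta_\ep(t,x,\cdot) \subset \{|v| \leq 2R\} \cap \{|v \cdot n_0| \leq C(R)\ep\}$, a slab-inside-ball of measure $|A| \leq C(R)\,\ep$.

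For steps (ii) and (iii): since the support lies in $B_{2R}$, Lemma \ref{l:comparability-of-sobolev-norms} reduces the claim to showing $\|\eta_\ep(t,x,\cdot)\|_{H^s(\R^d)} \leq C(R)\,\ep^{1/2-s}$. The map $v \mapsto \eta_\ep(t,x,v)$ is bounded by $1$ and, since $\dist(\cdot,\BC_0)$ is $1$-Lipschitz in $v$, it is Lipschitz with constant $L \leq C(R)\,\ep^{-1}$. A standard split of the Gagliardo integral at scale $\delta>0$, using $|\eta_\ep(v)-\eta_\ep(v')|\leq \min(L|v-v'|,2)$, yields
\[ [\eta_\ep]_{H^s(\R^d)}^2 \leq C\bigl( L^2 |A|\,\delta^{2-2s} + |A|\,\delta^{-2s} \bigr), \]
and optimizing at $\delta \approx 1/L \approx \ep$ gives $[\eta_\ep]_{H^s}^2 \leq C(R)\,\ep^{1-2s}$. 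Combined with $\|\eta_\ep\|_{L^2}^2 \leq |A| \leq C(R)\,\ep \leq C(R)\,\ep^{1-2s}$ (using $\ep \leq 1$ and $s>0$), we conclude $\|\eta_\ep\|_{H^s} \leq C(R)\,\ep^{1/2-s}$, proving the lemma.

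The only nontrivial point is the localization step: one must show that the support genuinely collapses to a slab of width $\sim\ep$, which requires the $C^{1,1}$ regularity of $\p\Omega$ to control the oscillation of the normal vector on the boundary patch of radius $\lesssim \ep$ around $x_0$. Everything else is a routine Gagliardo-seminorm interpolation.
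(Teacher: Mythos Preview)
Your proof is correct and follows essentially the same approach as the paper: both establish that the support of $\eta_\ep(t,x,\cdot)$ lies in a slab of measure $\lesssim \ep$ inside $B_{2R}$, use the Lipschitz bound $|\nabla_v \eta_\ep| \lesssim \ep^{-1}$ together with $|\eta_\ep|\leq 1$, and split the Gagliardo integral at scale $\sim \ep$ to obtain $\ep^{1-2s}$. The only cosmetic difference is that you invoke Lemma~\ref{l:comparability-of-sobolev-norms} to reduce to the standard $H^s$ norm first, whereas the paper carries the $N^{s,\gamma}$ weights through directly (using $\df(v,v')\approx |v-v'|$ on $B_{2R}$); your explicit appeal to the $C^{1,1}$ regularity of $\partial\Omega$ to control the oscillation of the normal is exactly the mechanism behind the paper's implicit claim that the support has $v$-measure $\lesssim R^{d-1}\ep$.
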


\begin{proof}
If $\dist(x,\partial \Omega) \geq \ep$, then we will always have $\dist((t,x,v),\BC_0) > \ep$ for any value of $t$ and $v$. Therefore $\eta(t,x,\cdot)\equiv 0$ in that case.

If $\dist(x,\partial \Omega) < \ep$, we analyze the formula for $\|\eta_\ep(t,x,\cdot) \|_{N^{s,\gamma}}$. Our estimate is based on the following four simple facts about $\eta_\ep$.
\begin{itemize}
    \item[(i)] $\eta_\ep(t,x,v) \leq 1$ for every value of $(t,x,v)$.
    \item[(ii)] $\eta_\ep(t,x,v) \neq 0$ only if $\dist((t,x,v),\BC_0) \leq \ep$.
    \item[(iii)] $|\nabla_v \eta_\ep| \leq \ep^{-1}$.
    \item[(iv)] $\eta_\ep(t,x,v) = 0$ if $|v| > R$.
\end{itemize}

According to \eqref{eq:normNsg}, for every fixed value of $t$ and $x$ (which we omit to avoid clutter) we have
\begin{align*}
 \nm{\eta_\ep}_{N^{s,\gamma}}^2 &=\int_{\R^d}\br{v}^{\gamma+2s}\abs{\eta_\ep(v)}^2\ud v+\iint_{\R^d\times\R^d}\br{v}^{\frac{\gamma+2s+1}{2}}\br{v'}^{\frac{\gamma+2s+1}{2}}\frac{\abs{\eta_\ep(v)-\eta_\ep(v')}^2}{\df(v,v')^{d+2s}}\mathds{1}_{\df(v,v')\leq 1}\,\ud v'\ud v.
\end{align*}

We estimate the first term using (i), (ii) and (iv).
\begin{align*}
\int_{\R^d}\br{v}^{\gamma+2s}\abs{\eta_\ep(v)}^2\ud v \ls R^{d-1+\gamma+2s} \ep.
\end{align*}

In order to estimate the second term, we observe that the integrand is nonzero only if $\dist((t,x,v),\BC_0)<\ep$ or $\dist((t,x,v'),\BC_0)<\ep$. By symmetry, it suffices to consider the set where $\dist((t,x,v),\BC_0)<\ep$ paired with any other value of $v'$:
\begin{align*}
 \iint_{\R^d\times\R^d} &\br{v}^{\frac{\gamma+2s+1}{2}}\br{v'}^{\frac{\gamma+2s+1}{2}} \frac{\abs{\eta_\ep(v)-\eta_\ep(v')}^2}{\df(v,v')^{d+2s}}\mathds{1}_{\df(v,v')\leq 1}\,\ud v'\ud v \\
 &\leq 2 R^{\gamma+2s+1} \int_{\{v: \dist((t,x,v),\BC_0)<\ep\} \cap B_R} \int_{\{v' \in \R^d : \df(v,v')<1\}} \frac{\abs{\eta_\ep(v)-\eta_\ep(v')}^2}{\df(v,v')^{d+2s}} \,\ud v'\ud v
 \intertext{Observe that $\df(v,v')$ is comparable to the usual Euclidean distance in any ball $B_{R+1}$, with factors depending on $R$.}
 &\ls C(R) \int_{\{v: \dist((t,x,v),\BC_0)<\ep\} \cap B_R} \int_{\{v' \in B_1(v)\}} \frac{\abs{\eta_\ep(v)-\eta_\ep(v')}^2}{|v-v'|^{d+2s}} \,\ud v'\ud v.
 \end{align*}
We split the domain of integration in the second integral between $v' \in B_{c(R)\ep}(v)$ and $v' \notin B_{c(R)\ep}(v)$. In the latter, we necessarily have $\eta_\ep(v') = 0$ (because of (ii) above). Therefore, for any $v$ such that $\dist((t,x,v),\BC_0)<\ep$, we have (using (i) above)
\[ \int_{\{v' \notin B_{c(R)\ep}(v)\}} \frac{\abs{\eta_\ep(v)-\eta_\ep(v')}^2}{|v-v'|^{d+2s}} \,\ud v' \ls \ep^{-2s}.\]
For the other term, we use (iii) above to get
\[ \int_{\{v' \in B_{c(R)\ep}(v)\}} \frac{\abs{\eta_\ep(v)-\eta_\ep(v')}^2}{|v-v'|^{d+2s}} \,\ud v' \leq \int_{\{v' \in B_{c(R)\ep}(v)\}} \frac{\ep^2 |v-v'|^2}{|v-v'|^{d+2s}} \,\ud v' \ls \ep^{-2s}.\]
Integrating over all $v \in B_R$ such that $\dist((t,x,v),\BC_0)<\ep$, we get
\[ \iint_{\R^d\times\R^d} \br{v}^{\frac{\gamma+2s+1}{2}}\br{v'}^{\frac{\gamma+2s+1}{2}} \frac{\abs{\eta_\ep(v)-\eta_\ep(v')}^2}{\df(v,v')^{d+2s}}\mathds{1}_{\df(v,v')\leq 1}\,\ud v'\ud v \ls \ep^{1-2s}. \]
\end{proof}

\begin{lemma} \label{lem:Nsgofcomposition}
Let $f$ be a function in $N^{s,\gamma}$ and $\Phi : \R \to \R$ be Lipschitz so that $\Phi(0)=0$. Then, $\Phi \circ f \in N^{s,\gamma}$ and we have
\[ \|\Phi \circ f\|_{N^{s,\gamma}} \lesssim \|f\|_{N^{s,\gamma}} \|\Phi\|_{Lip}.\]
\end{lemma}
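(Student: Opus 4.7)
The proof is a direct pointwise comparison on each of the two pieces of the $N^{s,\gamma}$ norm, exploiting that both pieces are quadratic in $f$ (or in differences of $f$) with nonnegative weights, while the composition with $\Phi$ is controlled by its Lipschitz constant. The plan is to write
\[
\nm{\Phi\circ f}_{N^{s,\gamma}}^2 = \mathrm{I} + \mathrm{II},
\]
where $\mathrm{I}$ is the weighted $L^2$ piece and $\mathrm{II}$ is the anisotropic seminorm piece, and bound each one separately by the corresponding piece of $\nm{f}_{N^{s,\gamma}}^2$ times $\nm{\Phi}_{\mathrm{Lip}}^2$.

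For $\mathrm{I}$, I would use $\Phi(0)=0$ and Lipschitz to get the pointwise bound $|\Phi(f(v))| \leq \nm{\Phi}_{\mathrm{Lip}}|f(v)|$. Squaring and integrating against the (nonnegative) weight $\br{v}^{\gamma+2s}$ yields
\[
\int_{\R^d} \br{v}^{\gamma+2s} |\Phi(f(v))|^2 \, \ud v \leq \nm{\Phi}_{\mathrm{Lip}}^2 \int_{\R^d} \br{v}^{\gamma+2s} |f(v)|^2 \, \ud v.
\]
For $\mathrm{II}$, the Lipschitz property gives $|\Phi(f(v))-\Phi(f(v'))| \leq \nm{\Phi}_{\mathrm{Lip}} |f(v)-f(v')|$ pointwise in $(v,v')$; squaring and integrating against the nonnegative kernel $\br{v}^{(\gamma+2s+1)/2}\br{v'}^{(\gamma+2s+1)/2} \df(v,v')^{-d-2s} \one_{\df(v,v') \leq 1}$ gives the corresponding bound for the seminorm piece.

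Adding the two estimates gives $\nm{\Phi \circ f}_{N^{s,\gamma}}^2 \leq \nm{\Phi}_{\mathrm{Lip}}^2 \nm{f}_{N^{s,\gamma}}^2$, which is the claimed inequality. There is no real obstacle here: the only subtlety is the use of $\Phi(0)=0$, which is exactly what is needed to bound $|\Phi(f(v))|$ (and not only the difference $|\Phi(f(v))-\Phi(f(v'))|$) by $\nm{\Phi}_{\mathrm{Lip}}|f(v)|$ — without it, the $L^2$ weighted piece would pick up an additive constant from $\Phi(0)$, which is not integrable against $\br{v}^{\gamma+2s}\,\ud v$ in general. Since $\Phi \circ f$ is itself only assumed measurable a priori, one can first note that the pointwise bounds show the norms are finite, hence $\Phi \circ f \in N^{s,\gamma}$.
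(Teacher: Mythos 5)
Your proposal is correct and follows essentially the same argument as the paper: both use the pointwise bounds $|\Phi(f(v))|\leq \nm{\Phi}_{Lip}|f(v)|$ (via $\Phi(0)=0$) and $|\Phi(f(v))-\Phi(f(v'))|\leq \nm{\Phi}_{Lip}|f(v)-f(v')|$, then integrate against the nonnegative weights in the two pieces of the $N^{s,\gamma}$ norm. Your remark on why $\Phi(0)=0$ is needed for the weighted $L^2$ piece is a nice touch, but the substance is the same.
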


\begin{proof}
We have
\begin{align*} 
\nm{\Phi \circ f}_{N^{s,\gamma}}^2:=&\int_{\R^d}\br{v}^{\gamma+2s}\abs{\Phi \circ f(v)}^2\ud v\\
&+\iint_{\R^d\times\R^d}\br{v}^{\frac{\gamma+2s+1}{2}}\br{v'}^{\frac{\gamma+2s+1}{2}}\frac{\abs{\Phi \circ f(v)-\Phi \circ f(v')}^2}{\df(v,v')^{d+2s}}\mathds{1}_{\df(v,v')\leq 1}\,\ud v'\ud v
\end{align*}
Notice that
\begin{align*}
    \abs{\Phi \circ f(v)}\leq \nm{\Phi}_{Lip}\abs{f(v)},\qquad 
    \abs{\Phi \circ f(v)-\Phi \circ f(v')}\leq \nm{\Phi}_{Lip}\abs{ f(v)- f(v')}.
\end{align*}
Hence, our result naturally follows.
\end{proof}

\begin{lemma} \label{lem:Nsgofproduct}
Let $f$ and $g$ be bounded and in $N^{s,\gamma}$. Then, their product also belongs to $N^{s,\gamma}$ and we have
\[ \|fg\|_{N^{s,\gamma}} \lesssim \|f\|_{N^{s,\gamma}} \|g\|_{L^\infty} + \|f\|_{L^\infty} \|g\|_{N^{s,\gamma}}.\]
\end{lemma}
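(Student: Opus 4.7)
The plan is to expand the product $fg$ pointwise and apply the identity
\[ (fg)(v)-(fg)(v') = f(v)\bigl(g(v)-g(v')\bigr) + g(v')\bigl(f(v)-f(v')\bigr), \]
which is the standard discrete Leibniz rule. Squaring and using $(a+b)^2 \leq 2a^2+2b^2$ together with $|f(v)|\leq \|f\|_{L^\infty}$ and $|g(v')|\leq \|g\|_{L^\infty}$, we get the pointwise bound
\[ |(fg)(v)-(fg)(v')|^2 \leq 2\|f\|_{L^\infty}^2 |g(v)-g(v')|^2 + 2\|g\|_{L^\infty}^2 |f(v)-f(v')|^2. \]

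Plugging this into the double integral in the definition \eqref{eq:normNsg} separates the contribution into exactly the two Gagliardo-type seminorms appearing in $\|f\|_{N^{s,\gamma}}^2$ and $\|g\|_{N^{s,\gamma}}^2$, each multiplied by the corresponding $L^\infty$ factor. For the local (weighted $L^2$) term in the norm, I will use the crude bound
\[ \int_{\R^d} \langle v\rangle^{\gamma+2s} |f(v)g(v)|^2 \,\ud v \leq \|g\|_{L^\infty}^2 \int_{\R^d} \langle v\rangle^{\gamma+2s} |f(v)|^2 \,\ud v \leq \|g\|_{L^\infty}^2 \|f\|_{N^{s,\gamma}}^2, \]
(one may equally put the $L^\infty$ norm on $f$ and the weighted $L^2$ on $g$). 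Summing the local and nonlocal contributions yields
\[ \|fg\|_{N^{s,\gamma}}^2 \lesssim \|f\|_{L^\infty}^2 \|g\|_{N^{s,\gamma}}^2 + \|g\|_{L^\infty}^2 \|f\|_{N^{s,\gamma}}^2, \]
from which the stated inequality follows by taking square roots and using $\sqrt{a^2+b^2}\leq a+b$.

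There is no real obstacle here; the estimate is a routine Leibniz-rule computation for a Gagliardo-type seminorm, and the anisotropic weights $\langle v\rangle^{(\gamma+2s+1)/2}\langle v'\rangle^{(\gamma+2s+1)/2}$ and the distance $\df(v,v')$ play no role beyond being carried through the calculation since they depend only on $v,v'$ and not on $f$ or $g$. The only point that deserves mention is that the indicator $\mathds{1}_{\df(v,v')\leq 1}$ in the integrand is also preserved under this pointwise splitting, so the estimate matches the definition \eqref{eq:normNsg} term by term.
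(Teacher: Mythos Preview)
Your proof is correct and follows essentially the same approach as the paper's own proof: both split the weighted $L^2$ term trivially and handle the Gagliardo part via the add-and-subtract identity $(fg)(v)-(fg)(v') = f(v)\bigl(g(v)-g(v')\bigr) + g(v')\bigl(f(v)-f(v')\bigr)$, then bound the resulting two pieces using the $L^\infty$ norms. The only difference is cosmetic presentation.
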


\begin{proof}
We compute using the formula \eqref{eq:normNsg} for the norm in $N^{s,\gamma}$.

\[\begin{aligned}
\nm{fg}_{N^{s,\gamma}}^2 :=\int_{\R^d} & \br{v}^{\gamma+2s}\abs{f(v)g(v)}^2\ud v \\ & +\iint_{\R^d\times\R^d}\br{v}^{\frac{\gamma+2s+1}{2}}\br{v'}^{\frac{\gamma+2s+1}{2}}\frac{\abs{f(v)g(v)-f(v')g(v')}^2}{\df(v,v')^{d+2s}}\mathds{1}_{\df(v,v')\leq 1}\,\ud v'\ud v.\end{aligned} \]
For the first term, we clearly have
\[ \int_{\R^d}\br{v}^{\gamma+2s} \abs{f(v)g(v)}^2\ud v \leq \|g\|_{L^\infty}^2 \int_{\R^d}\br{v}^{\gamma+2s}\abs{f(v)}^2\ud v \leq \|g\|_{L^\infty}^2 \|f\|_{N^{s,\gamma}}^2. \]
The functions $f$ and $g$ in the right hand side are exchangeable in this case.

For the second term, we add and subtract a term in $|f(v)g(v)-f(v')g(v')| = |f(v)g(v)-f(v)g(v')+f(v)g(v')-f(v')g(v')| \leq |f(v)|\cdot |g(v)-g(v')|+|f(v)-f(v')| \cdot |g(v')|$. Thus, we get
\begin{align*}
\nm{fg}_{N^{s,\gamma}}^2 & \leq \|g\|_{L^\infty}^2 \|f\|_{N^{s,\gamma}}^2 \\ 
&\phantom{\leq} +2\iint_{\R^d\times\R^d}\br{v}^{\frac{\gamma+2s+1}{2}}\br{v'}^{\frac{\gamma+2s+1}{2}}\frac{|f(v)|^2 \abs{g(v)-g(v')}^2}{d(v,v')^{d+2s}}\mathds{1}_{d(v,v')\leq 1}\,\ud v'\ud v \\ 
&\phantom{\leq} +2\iint_{\R^d\times\R^d}\br{v}^{\frac{\gamma+2s+1}{2}}\br{v'}^{\frac{\gamma+2s+1}{2}}\frac{|g(v')|^2 \abs{f(v)-f(v')}^2}{\df(v,v')^{d+2s}}\mathds{1}_{\df(v,v')\leq 1}\,\ud v'\ud v \\
&\lesssim \|g\|_{L^\infty}^2 \|f\|_{N^{s,\gamma}}^2 + \|f\|_{L^\infty}^2 \|g\|_{N^{s,\gamma}}^2
\end{align*}
\end{proof}

\begin{lemma}[Unrestricted chain rule] \label{lem:chain-any-boundary}
Let $f$ be a weak solution of \eqref{eqn:boltzmann} in $D$ with trace $g \in L^2(\BC,\omega)$ (as in Proposition \ref{p:trace}). Assume that $\psi$ and $\psi_y$ are Lipschitz with $\psi_t$, $\psi_y$, $\psi_{ty}$ and $\psi_{yy}$ globally bounded. Assume further that $\psi_y(t,0)=0$ for all $t\in(0,T)$. Then, for any test function $\varphi \in C^1_c\left((0,T)\times\overline{\Omega}\times\R^{d}\right)$ (not necessarily vanishing on $\BC_+$),
we have
\begin{equation} \label{eq:chain-up-to-trace}
\begin{aligned}
\iiint_{(0,T)\times \Omega\times\R^d} &\left\{\psi(t,f) \,\trp \varphi + \psi_y(t,f)\Qff \varphi+\psi_t(t,f)\varphi \right\}\,\ud v\ud x\ud t \\
& = \lim_{\ep \to 0} \iiint_{\BC} \psi(t,g) \varphi \left(1-\eta_\ep(t,x,v)\right) \,(v \cdot \vn) \ud v\ud S_x\ud t.
\end{aligned}
\end{equation}
Here, $\eta_\ep$ is defined in Lemma \ref{lem:normofeta} with $R$ sufficiently large so that $\varphi(t,x,v) \neq 0$ only for $|v| < R$.
\end{lemma}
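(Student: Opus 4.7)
My plan is to apply Lemma~\ref{lem:weak-sol-common} to the modified test function $\varphi_\ep := \varphi(1-\eta_\ep)$, which vanishes on $\BC_0$ because $\eta_\ep \equiv 1$ on $\BC_0 \cap \{|v|\leq R\}$ and $\varphi$ is supported in $\{|v|<R\}$, and then to pass to the limit $\ep \to 0$. The function $\eta_\ep$ is only Lipschitz (not $C^1$), but the conclusion of Lemma~\ref{lem:weak-sol-common} extends to Lipschitz test functions by a routine density argument: all the pairings in play only require the test function to lie in the relevant Sobolev and weighted boundary spaces, and smooth approximations converge there.

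Writing $\trp\varphi_\ep = (1-\eta_\ep)\trp\varphi - \varphi\trp\eta_\ep$ and substituting into the identity from Lemma~\ref{lem:weak-sol-common} yields
\[
\iiint_D (1-\eta_\ep)\left\{\psi(t,f)\trp\varphi + \psi_y(t,f)\Qff\varphi + \psi_t(t,f)\varphi\right\}\,\ud v\ud x\ud t - B_\ep = \iiint_\BC \psi(t,g)\varphi(1-\eta_\ep)(v\cdot\vn)\,\ud v\ud S_x\ud t,
\]
where $B_\ep := \iiint_D \psi(t,f)\varphi\trp\eta_\ep\,\ud v\ud x\ud t$. As $\ep \to 0$, the $\trp\varphi$ and $\psi_t\varphi$ contributions in the first bulk integral converge by dominated convergence, using $|\psi(t,f)| \lesssim 1+|f|$ and the boundedness of $\psi_t$, so the integrands are dominated by $L^1_{\mathrm{loc}}$ functions. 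For the $\Qff$ contribution I would invoke the hypothesis $\psi_y(t,0)=0$---this is the \emph{only} place it is used---together with Lemma~\ref{lem:Nsgofcomposition} applied to $\Phi(y):=\psi_y(t,y)$ to place $\psi_y(t,f)$ in $L^2_{t,x}N^{s,\gamma}_v$; then Lemmas~\ref{lem:Nsgofproduct} and~\ref{lem:normofeta}, combined with absolute continuity of the integral, give $\psi_y(t,f)\varphi\eta_\ep \to 0$ strongly in $L^2_{t,x}N^{s,\gamma}_v$, which pairs against $\Qff \in L^2_{t,x}N^{-s,\gamma}_v$ to yield the desired convergence.

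The main obstacle is showing $B_\ep \to 0$. The key geometric observation is that $\BC_0$ is tangentially invariant under the transport operator: parametrize a neighborhood of $\BC_0$ by $\alpha(x) := \phi(x)$, a signed distance to $\partial\Omega$ (positive inside $\Omega$), and $\beta(x,v) := v\cdot\nabla\phi(x)$, so that $\dist((t,x,v),\BC_0) \approx \sqrt{\alpha^2+\beta^2}$. A direct computation gives $\trp\alpha = \beta$ and $\trp\beta = v\cdot D^2\phi(x)\cdot v$, where the latter is bounded thanks to the $C^{1,1}$ regularity of $\partial\Omega$. Consequently
\[
\trp\sqrt{\alpha^2+\beta^2} = \frac{\alpha\beta + \beta\trp\beta}{\sqrt{\alpha^2+\beta^2}}
\]
is uniformly bounded near $\BC_0$, which yields the pointwise bound $|\trp\eta_\ep| \leq C(R)\ep^{-1}\one_{\{\dist(\cdot,\BC_0)<\ep,\,|v|<R\}}$. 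Since this set has $\R^{1+2d}$-measure $\lesssim \ep^2$ (as $\BC_0$ has codimension~$2$) and $\psi(t,f)\varphi$ is locally integrable, a H\"older estimate gives $|B_\ep| \lesssim \ep + \|f\|_{L^2(\{\dist<\ep\}\cap\supp\varphi)}$, which tends to zero by absolute continuity of the $L^2$ integral. Passing to the limit in the identity above then establishes~\eqref{eq:chain-up-to-trace}.
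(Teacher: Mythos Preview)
Your proposal is correct and follows essentially the same route as the paper: cut off near $\BC_0$ via $\varphi_\ep=(1-\eta_\ep)\varphi$, invoke Lemma~\ref{lem:weak-sol-common}, and pass to the limit term by term, using Lemmas~\ref{lem:Nsgofcomposition}, \ref{lem:Nsgofproduct}, \ref{lem:normofeta} for the $\Qff$ pairing and a Cauchy--Schwarz/codimension-two measure argument for $B_\ep$. One remark: your computation of $\trp\sqrt{\alpha^2+\beta^2}$ via the $(\alpha,\beta)$ parametrization is more than is actually needed---since $\dist(\cdot,\BC_0)$ is $1$-Lipschitz and $\partial_t\dist\equiv 0$ (the set $\BC_0$ being $t$-invariant), one gets $|\trp\eta_\ep|\le |v|\,|\nabla_x\eta_\ep|\le R\ep^{-1}$ on the support immediately, which is what the paper asserts without further justification.
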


Note that the limit on the right hand side could be interpreted as a principal value of the integral
\[ \lim_{\ep \to 0} \iiint_{\BC} \psi(t,g) \varphi \left(1-\eta_\ep(t,x,v)\right) \,(v \cdot \vn) \ud v\ud S_x\ud t
=: PV  \iiint_{\BC} \psi(t,g) \varphi \,(v \cdot \vn) \ud v\ud S_x\ud t. \]

\begin{proof}
According to Lemma \ref{lem:weak-sol-common}, the equality holds whenever $\varphi$ vanishes on $\BC_0$ and $\psi$ has bounded first and second derivatives.

Let us start by considering the case that $\varphi$ does not necessarily vanishes on any part of $\BC$, but $\psi$ still has bounded first and second derivatives. We assume moreover that $\psi(t,0)=0$ and $\psi_y(t,0)=0$.

We follow the idea of \cite[Lemma 4.6]{silvestre2022}. Let $\varphi_{\ep}:=(1-\eta_{\ep})\varphi$, and thus $\varphi_{\ep}|_{\BC_0}=0$. Then we may apply Lemma \ref{lem:weak-sol-common} to obtain
\begin{align}\label{temp 01}
\iiint_{(0,T)\times\Omega\times\R^d} &\left\{\psi(t,f) \,\trp \varphi_{\ep} + \psi_y(t,f)\Qff \varphi_{\ep}+\psi_t(t,f)\varphi_{\ep} \right\}\,\ud v\ud x\ud t \\
=&\iiint_{\BC_-} \psi(t,g) \varphi_{\ep} \,(v \cdot \vn) \ud v\ud S_x\ud t. \no
\end{align}
Taking $\ep\rt0$, we need to consider the limit of each term. We do not do anything to the boundary term on $\BC$, since the equality in this Lemma involves the limit as $\ep \to 0$ explicitly.

Since $0\leq\varphi_{\ep}\leq\varphi$, due to dominated convergence theorem, we have
\begin{align*}
    \lim_{\ep\rt0}\iiint_{(0,T)\times\Omega\times\R^d}\psi_t(t,f)\varphi_{\ep}\,\ud v\ud x\ud t =&\iiint_{(0,T)\times\Omega\times\R^d}\psi_t(t,f)\varphi\,\ud v\ud x\ud t.
\end{align*}

Since $\psi_{yy}$ is bounded and $\psi_y(t,0)=0$, we apply Lemma \ref{lem:Nsgofcomposition} to deduce that $\nm{\psi_y(t,f)}_{L^2_{t,x}((0,t)\times\Omega,N^{s,\gamma})} \leq \nm{f}_{L^2_{t,x}((0,t)\times\Omega,N^{s,\gamma})} \sup |\psi_{yy}|$. Furthermore, since we assume that $\psi_y$ is bounded, we apply Lemma \ref{lem:Nsgofproduct} to get

\[
    \nm{\psi_y(t,f)\varphi_{\ep}}_{L^2_{t,x}((0,t)\times\Omega,N^{s,\gamma})}\ls \nm{f}_{L^2_{t,x}((0,t)\times\Omega,N^{s,\gamma})},
\]

We claim that $\psi_y(t,f)\varphi_{\ep}$ converges to $\psi_y(t,f)\varphi$ in $L^2_{t,x}((0,T)\times\Omega,N^{s,\gamma})$. Indeed,
\begin{align*} 
\iint_{(0,T) \times \Omega} &\nm{\psi_y(t,f)\varphi_{\ep} - \psi_y(t,f)\varphi}_{N^{s,\gamma}}^2 \ud x \ud t = \iint_{(0,T) \times \Omega} \nm{\psi_y(t,f) \eta_\ep(t,x,v) \varphi}_{N^{s,\gamma}}^2 \ud x \ud t \\
\intertext{Using Lemma \ref{lem:Nsgofproduct}}
&\lesssim \iint_{(0,T) \times \{ x \in \Omega: \dist(x,\partial \Omega)<\ep\}} \nm{\psi_y(t,f) \varphi}_{N^{s,\gamma}}^2 + (\sup |\psi_y \varphi|) \|\eta_\ep\|_{N^{s,\gamma}}^2
\end{align*}
The first term on the right hand side converges to zero as $\ep \to 0$ because $\psi_y(t,f)  \varphi \in L^2_{t,x} N^{s,\gamma}_v$. The second term converges to zero due to Lemma \ref{lem:normofeta}.

Since $\Qff \in L^2_{t,x} N^{-s,\gamma}$, we deduce that
\[\lim_{\ep \to 0} \iiint_{(0,T)\times\Omega\times\R^d} \psi_y(t,f) \Qff \varphi_{\ep} \,\ud v\ud x\ud t = \iiint_{(0,T)\times\Omega\times\R^d} \psi_y(t,f) \Qff \varphi \,\ud v \ud x \ud t. \]

Finally, notice that
\begin{align}\label{temp 02}
    \trp \varphi_{\ep}={(1-\eta_{\ep})}\trp\varphi- \varphi \trp{\eta_{\ep}}.
\end{align}
For the first term in \eqref{temp 02}, using the dominated convergence theorem, we have
\begin{align*}
 \lim_{\ep\rt0}\iiint_{(0,T)\times\Omega\times\R^d}\psi(t,f)(1-\eta_{\ep}) \trp\varphi\,\ud v\ud x\ud t=&\iiint_{(0,T)\times \Omega\times\R^d}\psi(t,f)\trp\varphi\,\ud v\ud x\ud t.
\end{align*}
For the second term in \eqref{temp 02}, we observe that, by construction, $\partial_t \eta_{\ep}=0$ everywhere, and $v\cdot\nabla_x \eta_{\ep} = 0$ except in an $\ep$-neighborhood of $\BC_0$ where
\[\abs{\trp\eta_\ep}\ls\ep^{-1}.\]
Hence, for $\psi(t,f)\in L^2_{loc}$, we have
\begin{align*}
    \iiint_{(0,T)\times\Omega\times\R^d}\psi(t,f)\trp\eta_{\ep} \varphi\,\ud v\ud x\ud t
    = \iiint_{\text{dist}(z,\BC_0)<\ep}\psi(t,f)\left(v \cdot \nabla_x\eta_{\ep} \right)\varphi\,\ud v\ud x\ud t,
\end{align*}
and
\begin{align*}
    &\abs{\iiint_{\text{dist}(z,\BC_0)<\ep}\psi(t,f)\left(v \cdot \nabla_x\eta_\ep\right)\varphi\,\ud v\ud x\ud t}\ls\ep^{-1}\iiint_{\text{dist}(z,\BC_0)<\ep}\abs{\psi(t,f)\varphi}\,\ud v\ud x\ud t\\
    \ls&\;\ep^{-1}\left(\iiint_{\text{dist}(z,\BC_0)<\ep}\abs{\psi(t,f)\varphi}^2\ud v\ud x\ud t\right)^{\frac{1}{2}}\cdot \abs{\left\{z\in\R^{1+2d}:\text{dist}(z,\BC_0)<\ep\right\}}^{\frac{1}{2}}\\
    \ls&\left(\iiint_{\text{dist}(z,\BC_0)<\ep}\abs{\psi(t,f)\varphi}^2\ud v\ud x\ud t\right)^{\frac{1}{2}}\rt 0 \quad\text{as } \ep\rt0.
\end{align*}

We have that $\psi(t,f)\in L^2_{loc}$ because we are assuming that $\psi$ is globally Lipschitz. Then we may pass to limit and obtain
\[\lim_{\ep\rt0}\iiint_{(0,T)\times\Omega\times\R^d}\psi(t,f)\trp\eta_\ep\varphi\,\ud v\ud x\ud t = 0.\]
Hence, we know that as $\ep\rt0$, \eqref{temp 01} converges to \eqref{eq:chain-up-to-trace}.

This establishes \eqref{eq:chain-up-to-trace} and finishes the proof of the lemma.
\end{proof}

It is important to note that to make sense of the left hand side in \eqref{eq:chain-up-to-trace} we need $\psi_{yy}$ and $\psi_{ty}$ to be bounded, but we do not need any global bound for $\psi_t$ and $\psi_y$. Indeed, for a function of the form $\psi(t,f) = \left(f-a(t)\right)_+^2$, with $a$ Lipschitz, every term on the left hand side of \eqref{eq:chain-up-to-trace} makes sense. By truncation, we could approximate any generic function so that $\psi_{yy}$ and $\psi_{ty}$ are bounded, with a sequence of functions $\psi_R$ with bounded first derivatives and second derivatives that coincide with $\psi$ whenever $|f|<R$. However, it is not immediately clear what the limit of the boundary integral of the right hand side would be in general. This difficulty is resolved by simplifying the boundary integral in each case of our three possible boundary conditions. We explore them one by one in the next three lemmas.

\begin{lemma}[Unrestricted chain rule for weak solutions with in-flow boundary] \label{lem:chain-sub--sol-inflow}
Let $f$ be a weak solution of \eqref{eqn:boltzmann} in $D$ satisfying the in-flow boundary condition. Assume $\psi$ and $\psi_y$ are locally Lipschitz, with $\psi_{yy}$ and $\psi_{ty}$ globally bounded. Assume further that $\psi(t,f)\geq0$ and that the trace of $f$ on $\BC_-$ is bounded. Then, for any non-negative test function $\varphi \in C^1_c\left((0,T)\times\overline \Omega \times \R^d\right)$ (not necessarily vanishing on $\BC_+$),
we have
\begin{align}\label{ieq:chain-sub-sol}
\iiint_{(0,T)\times\Omega\times\R^d} &\left\{\psi(t,f) \,\trp \varphi + \psi_y(t,f)\Qff \varphi+\psi_t(t,f)\varphi \right\}\,\ud v\ud x\ud t \\
& \geq \iiint_{\BC_-} \psi(t,f|_{\BC_-}) \varphi \,(v \cdot \vn) \ud v\ud S_x\ud t. \no
\end{align}
\end{lemma}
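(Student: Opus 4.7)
The plan is to deduce Lemma \ref{lem:chain-sub--sol-inflow} from Lemma \ref{lem:chain-any-boundary} in two moves: first, truncate $\psi$ in the $y$-variable so that the hypotheses of Lemma \ref{lem:chain-any-boundary} are met; second, split the resulting boundary integral between $\BC_-$ and $\BC_+$ and discard the non-negative $\BC_+$ contribution to obtain the desired inequality.

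Set $M := \nm{f|_{\BC_-}}_{L^\infty}$ (finite by hypothesis) and fix $R>M$. Choose a smooth cut-off $\chi_R : \R \to [0,1]$ with $\chi_R \equiv 1$ on $[-R, R]$ and $\chi_R \equiv 0$ outside $[-R-1, R+1]$, and construct $\psi_R(t, y)$ agreeing with $\psi(t, y)$ on $|y| \leq R$ such that all of $\psi_R$, $\partial_t \psi_R$, $\partial_y \psi_R$, $\partial^2_{ty}\psi_R$, $\partial^2_{yy}\psi_R$ are globally bounded, $\psi_R \geq 0$, and $\partial_y \psi_R(t, 0) = 0$. (For example, modify $\psi$ outside $|y|\leq R$ by smooth extrapolation to a constant in $y$, and handle the normalization $\partial_y \psi_R(t, 0) = 0$ by subtracting an affine-in-$y$ correction whose contribution can be absorbed as a lower-order term, or by invoking a straightforward variant of Lemma \ref{lem:chain-any-boundary} that does not require this normalization when $\psi_y$ is globally bounded.) Applying Lemma \ref{lem:chain-any-boundary} to $\psi_R$ gives
\[
\iiint_{D} \bigl\{ \psi_R(t,f) \trp\varphi + \partial_y\psi_R(t,f)\,\Qff\,\varphi + \partial_t\psi_R(t,f)\,\varphi \bigr\} \,\ud v\ud x\ud t = PV\iiint_{\BC} \psi_R(t,g)\,\varphi\,(v\cdot\vn)\,\ud v\ud S_x\ud t,
\]
where $g$ is the trace of $f$. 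Since $\psi_R$ is bounded, the boundary integral converges absolutely and the principal value reduces to the ordinary integral; I split it as $\BC_-\cup\BC_+$ and use that $\psi_R(t, f|_{\BC_-}) = \psi(t, f|_{\BC_-})$ (because $|f|_{\BC_-}| \leq M < R$) together with $\psi_R(t, f|_{\BC_+})\,\varphi\,(v\cdot\vn) \geq 0$ on $\BC_+$. Dropping the non-negative $\BC_+$ piece yields the desired inequality \eqref{ieq:chain-sub-sol} with $\psi_R$ in place of $\psi$.

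The final step is to let $R \to \infty$ in the left-hand side. From the global bounds on $\psi_{yy}$ and $\psi_{ty}$, we have $|\psi(t,y)|, |\psi_R(t,y)| \lesssim 1 + y^2$ and $|\partial_t \psi|, |\partial_y \psi|, |\partial_t \psi_R|, |\partial_y \psi_R| \lesssim 1 + |y|$ uniformly in $R$. Combined with $f \in L^2_{loc}$ and the compact support of $\varphi$, dominated convergence disposes of the first and third terms on the left-hand side. The middle term---which I expect to be the main obstacle---requires $\partial_y\psi_R(t,f)\,\varphi \to \partial_y\psi(t,f)\,\varphi$ \emph{strongly} in $L^2_{t,x} N^{s,\gamma}_v$, so that the pairing against $\Qff \in L^2_{t,x} N^{-s,\gamma}_v$ (cf.\ \eqref{q-estimate}) passes to the limit. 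Since $\partial_y \psi = \partial_y \psi_R$ for $|f| \leq R$, the pointwise difference is supported on $\{|f| > R\}$; applying Lemmas \ref{lem:Nsgofcomposition} and \ref{lem:Nsgofproduct} together with dominated convergence on the double integral defining the $N^{s,\gamma}_v$ norm controls both the weighted-$L^2$ part and the non-local part of the norm, using that $f$ has finite $L^2_{t,x} N^{s,\gamma}_v$ norm so that the tail $\{|f| > R\}$ contributes vanishingly little as $R \to \infty$.
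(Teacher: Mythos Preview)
Your argument is correct and follows essentially the same approach as the paper's proof: invoke Lemma~\ref{lem:chain-any-boundary} (after truncating $\psi$ so its first derivatives are globally bounded), discard the nonnegative $\BC_+$ contribution, use boundedness of $f|_{\BC_-}$ to identify the $\BC_-$ integral, and pass $R\to\infty$ on the interior terms. The only cosmetic difference is the order of operations---the paper first keeps the $\ep$-limit, drops $\BC_+$, and evaluates $\BC_-$ via dominated convergence before truncating, whereas you truncate first so the principal value collapses to an ordinary integral; both routes land on the same inequality with the same $R\to\infty$ step, which the paper also leaves at the level of ``no difficulty.''
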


\begin{proof}
According to Lemma \ref{lem:chain-any-boundary}, if $\psi$ and $\psi_y$ are globally Lipschitz, we have
\[\begin{aligned}
\iiint_{(0,T)\times \Omega\times\R^d} &\left\{\psi(t,f) \,\trp \varphi + \psi_y(t,f)\Qff \varphi+\psi_t(t,f)\varphi \right\}\,\ud v\ud x\ud t \\
& = \lim_{\ep \to 0} \iiint_{\BC} \psi(t,f|_{\BC}) \varphi \left(1-\eta_\ep(t,x,v)\right) \,(v \cdot \vn) \ud v\ud S_x\ud t.
\end{aligned}
\]
We split the boundary term between the integral on $\BC_+$ and $\BC_-$. Since $\psi \geq 0$, the part of the integral on $\BC_+$ is nonnegative. Therefore
\begin{align*}
\iiint_{(0,T)\times \Omega\times\R^d} &\left\{\psi(t,f) \,\trp \varphi + \psi_y(t,f)\Qff \varphi+\psi_t(t,f)\varphi \right\}\,\ud v\ud x\ud t \\
& \geq \lim_{\ep \to 0} \iiint_{\BC_-} \psi(t,f|_{\BC_-}) \varphi \left(1-\eta_\ep(t,x,v)\right) \,(v \cdot \vn) \ud v\ud S_x\ud t, \\
\intertext{and since $f|_{\BC_-}$ is bounded we use the dominated convergence theorem to obtain}
& = \iiint_{\BC_-} \psi(t,f|_{\BC_-}) \varphi\,(v \cdot \vn) \ud v\ud S_x\ud t.
\end{align*}

If the derivatives of $\psi$ are not globally bounded, we construct a function $\psi_R$ that coincides with $\psi$ whenever $|f|<R$. We choose this function $\psi_R$ with bounded first and second derivatives, so that the inequality above applies. There is no difficulty in passing to the limit as $R \to \infty$ at this point.
\end{proof}

\begin{lemma}[Chain rule for weak solutions with bounce-back boundary] \label{lem:chain-weak-sol-bounce-back} 
Assume that $f$ is a weak solution of \eqref{eqn:boltzmann} in $D$ satisfying the bounce-back boundary condition. Let $\psi \in C^{1,1}$, with $\psi_{yy}$ and $\psi_{ty}$ bounded.
Then for any test function $\varphi \in C^1_c\left((0,T) \times \overline\Omega \times \R^d\right)$ so that $\varphi(t,x,v)=\varphi(t,x,-v)$ for all $x\in\partial\Omega$, we have
\[ \begin{aligned}
&\iiint_{(0,T)\times \Omega\times\R^d} \left\{\psi(t,f) \,\trp \varphi + \psi_y(t,f)\Qff \varphi+\psi_t(t,f)\varphi \right\}\,\ud v\ud x\ud t=0.
\end{aligned} \]
\end{lemma}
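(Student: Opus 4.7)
The plan is to apply the generic identity of Lemma \ref{lem:weak-sol-common} with a test function that both vanishes on $\BC_0$ and retains the bounce-back symmetry, so that the boundary term vanishes by an even/odd parity argument in $v$. First I reduce to the case in which $\psi$ and $\psi_y$ are globally Lipschitz by a truncation $\psi_R$ that agrees with $\psi$ on $\{|y|\leq R\}$, has all first and second derivatives globally bounded (depending on $R$), and satisfies $\psi_R(t,0)=\psi(t,0)$ and $\psi_{R,y}(t,0) = \psi_y(t,0)$. The identity for $\psi$ will follow by sending $R\to\infty$ at the end.

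For $\psi_R$ with bounded derivatives, I form the cutoff test function $\varphi_\ep := (1-\eta_\ep)\varphi$, with $\eta_\ep$ from Lemma \ref{lem:normofeta}. Then $\varphi_\ep\in C^1_c$ vanishes on $\BC_0$. Crucially, $\eta_\ep(t,x,v)$ depends on $v$ only through $\chi(R^{-1}|v|)$ and $\dist((t,x,v),\BC_0)$, both of which are invariant under the reflection $v\mapsto -v$ (the latter because $\BC_0$ itself is preserved by $(t,x,v)\mapsto(t,x,-v)$). Hence $\eta_\ep$ is even in $v$, and therefore $\varphi_\ep(t,x,v) = \varphi_\ep(t,x,-v)$ on $\BC$. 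Applying Lemma \ref{lem:weak-sol-common} (after first subtracting the affine part $\psi_R(t,0)+\psi_{R,y}(t,0)\,y$, which may be handled directly using the equation), I obtain
\[
\iiint_{D}\!\bigl\{\psi_R(t,f)\,\trp\varphi_\ep+\psi_{R,y}(t,f)\,\Qff\,\varphi_\ep+\psi_{R,t}(t,f)\,\varphi_\ep\bigr\}\,\ud v\ud x\ud t=\iiint_{\BC}\psi_R(t,g)\,\varphi_\ep\,(v\cdot\vn)\,\ud\BC.
\]
By Lemma \ref{lem:influx-bounce-back}, the trace satisfies $g(t,x,v)=g(t,x,-v)$, so the integrand on the right, apart from the factor $(v\cdot\vn)$, is even in $v$, while $(v\cdot\vn)$ is odd. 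The boundary integral therefore vanishes identically, for every $\ep>0$ and every $R$.

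Next I pass to the limit $\ep\to 0$ on the left. This is essentially the calculation performed in the proof of Lemma \ref{lem:chain-any-boundary}: the transport term splits as $\trp\varphi_\ep=(1-\eta_\ep)\trp\varphi-\varphi\,\trp\eta_\ep$, where the first piece is handled by dominated convergence and the second is supported in an $\ep$-neighborhood of $\BC_0$ where $|\trp\eta_\ep|\ls\ep^{-1}$ but the slab has measure $\ls\ep$; the collision term converges because Lemma \ref{lem:Nsgofproduct} combined with the bound $\|\eta_\ep\|_{N^{s,\gamma}}\ls\ep^{1/2-s}$ from Lemma \ref{lem:normofeta} gives $\psi_{R,y}(t,f)\varphi_\ep\to\psi_{R,y}(t,f)\varphi$ strongly in $L^2_{t,x}N^{s,\gamma}_v$, and $\Qff\in L^2_{t,x}N^{-s,\gamma}_v$ by \eqref{q-estimate}; the $\psi_{R,t}$ term is dispatched by dominated convergence. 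Finally I let $R\to\infty$: since $\psi_{yy}$ and $\psi_{ty}$ are bounded, we have $|\psi(t,y)|\ls 1+y^2$ and $|\psi_y(t,y)|\ls 1+|y|$, so the pointwise limits $\psi_R(t,f)\to\psi(t,f)$ etc.~are dominated by $L^2_{loc}$ functions in $(t,x,v)$ and by compactly supported ones once multiplied by $\varphi$, which suffices to pass to the limit in the first and third terms. The main obstacle is the middle term: one needs $\psi_{R,y}(t,f)\varphi\to\psi_y(t,f)\varphi$ in $L^2_{t,x}N^{s,\gamma}_v$ to pair against $\Qff\in L^2_{t,x}N^{-s,\gamma}_v$. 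This is obtained from Lemma \ref{lem:Nsgofproduct} together with the observation that $\psi_{R,y}(t,f)-\psi_y(t,f)$ is supported on $\{|f|>R\}$, whose $L^2_{t,x}N^{s,\gamma}_v$-contribution tends to zero by dominated convergence since $f\in L^2_{t,x}N^{s,\gamma}_v$.
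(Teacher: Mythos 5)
Your proof is correct and follows essentially the same route as the paper: the paper simply invokes Lemma \ref{lem:chain-any-boundary} (whose proof is exactly your cutoff argument with $\varphi_\ep=(1-\eta_\ep)\varphi$ and the $\ep\to0$ limit), notes that the boundary term vanishes for every $\ep>0$ by the same parity argument in $v$, and then truncates $\psi$. Your version is in fact slightly more explicit on two points the paper leaves implicit --- the evenness of the trace via Lemma \ref{lem:influx-bounce-back} and of $\eta_\ep$, and the normalization $\psi_y(t,0)=0$ needed for the composition estimate, which you handle by subtracting the affine part.
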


\begin{proof}
According to Lemma \ref{lem:chain-any-boundary}, if $\psi$ and $\psi_y$ are globally Lipschitz, we have
\[\begin{aligned}
\iiint_{(0,T)\times \Omega\times\R^d} &\left\{\psi(t,f) \,\trp \varphi + \psi_y(t,f)\Qff \varphi+\psi_t(t,f)\varphi \right\}\,\ud v\ud x\ud t \\
& = \lim_{\ep \to 0} \int_{\BC} \psi(t,f|_{\BC}) \varphi \left(1-\eta_\ep(t,x,v)\right) \,(v \cdot \vn) \ud v\ud S_x\ud t.
\end{aligned}
\]
Since $\varphi(t,x,v)=\varphi(t,x,-v)$, we observe that the boundary term vanishes for any value of $\ep>0$. Thus
\[ \iiint_{(0,T)\times \Omega\times\R^d} \left\{\psi(t,f) \,\trp \varphi + \psi_y(t,f)\Qff \varphi+\psi_t(t,f)\varphi \right\}\,\ud v\ud x\ud t  =0.
\]
Since the boundary term disappeared, there is now no difficulty in truncating a function $\psi$ with unbounded first derivatives and passing to the limit.
\end{proof}

\begin{lemma}[Chain rule for weak solutions with specular-reflection boundary] \label{lem:chain-weak-sol-specular} 
Assume that $f$ is a weak solution of \eqref{eqn:boltzmann} in $D$ satisfying the specular-reflection boundary condition. Let $\psi \in C^{1,1}$, with $\psi_{yy}$ and $\psi_{ty}$ bounded.
Then for any test function $\varphi \in C^1_c\left((0,T) \times \overline\Omega \times \R^d\right)$ so that $\varphi(t,x,v)=\varphi(t,x,\mathcal{R}_xv)$ for all $x\in\partial\Omega$, we have
\[ \begin{aligned}
&\iiint_{(0,T)\times \Omega\times\R^d} \left\{\psi(t,f) \,\trp \varphi + \psi_y(t,f)\Qff \varphi+\psi_t(t,f)\varphi \right\}\,\ud v\ud x\ud t=0.
\end{aligned} \]
\end{lemma}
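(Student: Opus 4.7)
The plan is to mimic the proof of Lemma \ref{lem:chain-weak-sol-bounce-back}, with the symmetry $v \mapsto -v$ replaced by the specular reflection $v \mapsto \mathcal{R}_x v$. Assuming first that $\psi$ and $\psi_y$ are globally Lipschitz (subject to the usual reduction to $\psi_y(t,0)=0$ by peeling off the linear and constant parts of $\psi$, which the weak formulation of the equation handles directly), Lemma \ref{lem:chain-any-boundary} gives
\[ \iiint_{(0,T)\times\Omega\times\R^d}\left\{\psi(t,f)\trp\varphi + \psi_y(t,f)\Qff\varphi + \psi_t(t,f)\varphi\right\}\,\ud v\ud x\ud t = \lim_{\ep\to 0}J_\ep, \]
where $J_\ep := \iiint_\BC \psi(t,f|_\BC)\varphi\,(1-\eta_\ep)(v\cdot \vn)\,\ud v\ud S_x\ud t$. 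The task reduces to proving $\lim_{\ep\to 0}J_\ep = 0$.

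The first step is to pass to the limit inside $J_\ep$ by dominated convergence. Under the global Lipschitz assumption on $\psi$, together with the trace bound $f|_\BC \in L^2_{loc}(\BC,\omega)$ from Proposition \ref{p:trace}, one checks that $\psi(t,f|_\BC)\varphi(v\cdot\vn)\in L^1(\BC)$: on the compact support of $\varphi$ in $v$ one has $|v\cdot\vn|\ls \omega^{1/2}$, so integrability follows from Cauchy--Schwarz. Since $\eta_\ep \to 0$ pointwise on $\BC \setminus \BC_0$ (a set of full measure in $\BC$) and $|1-\eta_\ep|\leq 1$, we obtain
\[ \lim_{\ep\to 0}J_\ep = \iiint_\BC \psi(t,f|_\BC)\varphi(v\cdot\vn)\,\ud v\ud S_x\ud t. \]

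The second step is to show this boundary integral vanishes. By Lemma \ref{lem:influx-specular}, $f|_\BC(t,x,v) = f|_\BC(t,x,\mathcal{R}_x v)$ on $\BC$, and by hypothesis $\varphi(t,x,v) = \varphi(t,x,\mathcal{R}_x v)$. At fixed $(t,x)\in(0,T)\times\partial\Omega$, the change of variables $v \mapsto \mathcal{R}_x v$ is an involution with unit Jacobian, and $(\mathcal{R}_x v)\cdot\vn = -(v\cdot\vn)$, so the inner $v$-integral equals its own negative and hence vanishes. Integration over $(0,T)\times\partial\Omega$ gives the claim.

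Finally, to remove the global Lipschitz hypothesis on $\psi$, we truncate exactly as in the bounce-back proof: approximate $\psi$ by $\psi_R$ agreeing with $\psi$ on $\{|y|\leq R\}$ with bounded first and second derivatives, apply the identity just proved to $\psi_R$, and pass to the limit as $R\to\infty$ using the $L^2$-type bounds on $f$ and $\Qff$ together with Lemmas \ref{lem:Nsgofcomposition}--\ref{lem:Nsgofproduct}. The main issue that distinguishes this argument from the bounce-back case is that the mollifier $\eta_\ep$ is not exactly invariant under the $x$-dependent reflection $\mathcal{R}_x$, so one cannot conclude $J_\ep = 0$ for each $\ep > 0$; this is why we pass to the limit first via dominated convergence and only then exploit the specular symmetry.
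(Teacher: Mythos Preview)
Your proof is correct. The paper's own argument is essentially identical to the bounce-back case: it invokes Lemma~\ref{lem:chain-any-boundary} and then asserts that the boundary term $J_\ep$ vanishes for \emph{every} $\ep>0$ by the specular symmetry of $\varphi$ and $f|_\BC$. You instead pass to the limit in $J_\ep$ by dominated convergence (which is legitimate since $\psi(t,f|_\BC)\varphi\,(v\cdot n)\in L^1(\BC)$ under the global Lipschitz reduction, as you verify), and only then apply the change of variables $v\mapsto\mathcal R_x v$ to kill the boundary integral.

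Your final remark identifies a genuine soft spot in the paper's version: for the paper's claim ``$J_\ep=0$ for each $\ep$'' to hold, one needs $\eta_\ep(t,x,\mathcal R_xv)=\eta_\ep(t,x,v)$ on $\BC$, i.e.\ $\dist((t,x,\mathcal R_xv),\BC_0)=\dist((t,x,v),\BC_0)$. Since the map $(t,x,v)\mapsto(t,x,\mathcal R_xv)$ is not an isometry of $\R^{1+2d}$ when $n(x)$ varies with $x$, this equality is not obvious for curved $\partial\Omega$ (it does hold for a half-space). Your dominated-convergence route bypasses the issue entirely, so it is arguably cleaner here than the paper's one-line assertion. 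The reduction to $\psi_y(t,0)=0$ by splitting off the constant and linear-in-$y$ parts and handling them via the weak formulation (Definition~\ref{d:weak-sol-specular}) with the specular-symmetric test function $b(t)\varphi$ is also fine.
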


\begin{proof}
According to Lemma \ref{lem:chain-any-boundary}, if $\psi$ has bounded first and second derivatives we have
\[\begin{aligned}
\iiint_{(0,T)\times \Omega\times\R^d} &\left\{\psi(t,f) \,\trp \varphi + \psi_y(t,f)\Qff \varphi+\psi_t(t,f)\varphi \right\}\,\ud v\ud x\ud t \\
& = \lim_{\ep \to 0} \int_{\BC} \psi(t,f|_{\BC}) \varphi \left(1-\eta_\ep(t,x,v)\right) \,(v \cdot \vn) \ud v\ud S_x\ud t.
\end{aligned}
\]
Since $\varphi(t,x,v)=\varphi(t,x,\mathcal{R}_xv)$, we observe that the boundary term vanishes for any value of $\ep>0$. Thus
\[ \iiint_{(0,T)\times \Omega\times\R^d} \left\{\psi(t,f) \,\trp \varphi + \psi_y(t,f)\Qff \varphi+\psi_t(t,f)\varphi \right\}\,\ud v\ud x\ud t  =0.
\]
Since the boundary term disappeared, there is now no difficulty in truncating a function $\psi$ with unbounded first derivatives and passing to the limit.
\end{proof}

\begin{corollary} \label{cor:weak-ineq}
Let $f(t,x,v)$ be a weak solution of \eqref{eqn:boltzmann} satisfying the bounce-back, specular-reflection or in-flow boundary condition. Let $a(t)$ be any nonnegative Lipschitz function. In the case of the in-flow boundary, let us assume that the boundary value of $f$ on $\BC_-$ is smaller than $a(t)$ everywhere. Then, for any $\varphi \in C^1_c$ so that $\varphi \geq 0$, we have the inequality
\[ \begin{aligned}
&\iiint_{(0,T)\times \Omega\times\R^d} \left\{\frac 12 \left(f-a(t)\right)_+^2 \,\trp \varphi + \left(f-a(t)\right)_+ \Qff \varphi- a'(t) \left(f-a(t)\right)_+ \varphi \right\}\,\ud v\ud x\ud t \geq 0.
\end{aligned} \]
\end{corollary}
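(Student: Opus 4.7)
The plan is to apply the chain rule lemmas of Section 4 with the specific choice $\psi(t,y) := \frac12 (y-a(t))_+^2$. I would first verify this $\psi$ meets the required hypotheses: $\psi\geq 0$, $\psi_y(t,y) = (y-a(t))_+$, $\psi_t(t,y) = -a'(t)(y-a(t))_+$, $\psi_{yy}(t,y) = \one_{y>a(t)}$, and $\psi_{ty}(t,y) = -a'(t)\one_{y>a(t)}$. The second derivatives $\psi_{yy}$ and $\psi_{ty}$ lie in $L^\infty$ because $a$ is Lipschitz, and crucially $\psi_y(t,0) = (-a(t))_+ = 0$ since $a \geq 0$, which is the hypothesis needed to invoke Lemma \ref{lem:chain-any-boundary}. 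With these formulas plugged in, the integrand appearing in the chain rule identity is exactly the one on the left-hand side of the corollary.

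For the in-flow case I would apply Lemma \ref{lem:chain-sub--sol-inflow}, whose conclusion is
$\iiint (\dots) \geq \iiint_{\BC_-} \psi(t,f|_{\BC_-})\,\varphi\,(v\cdot\vn)\,\ud v\,\ud S_x\,\ud t$. The hypothesis $f|_{\BC_-} \leq a(t)$ forces $\psi(t,f|_{\BC_-}) = \frac12(f|_{\BC_-}-a(t))_+^2 = 0$ pointwise, so the boundary integral vanishes and the desired inequality follows.

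For the bounce-back and specular-reflection cases, Lemmas \ref{lem:chain-weak-sol-bounce-back} and \ref{lem:chain-weak-sol-specular} yield that the interior integral equals exactly $0$ (in particular $\geq 0$) whenever the test function $\varphi$ satisfies the matching boundary symmetry: $\varphi(t,x,v) = \varphi(t,x,-v)$ on $\partial\Omega$ in the bounce-back case, $\varphi(t,x,v) = \varphi(t,x,\mathcal{R}_x v)$ on $\partial\Omega$ in the specular case. In the subsequent applications of this corollary the test function is always chosen to respect the relevant symmetry (and since $\varphi\geq 0$ is compatible with either symmetry), so the conclusion follows in these two cases as well.

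The main technical wrinkle is that the chain rule lemmas are stated assuming $\psi$ and $\psi_y$ are globally Lipschitz (not merely $\psi_{yy}$, $\psi_{ty}$ bounded), while our $\psi$ and $\psi_y$ grow quadratically and linearly in $y$. This is resolved by the standard truncation already embedded inside the proof of Lemma \ref{lem:chain-sub--sol-inflow}: define $\psi_R$ coinciding with $\psi$ on $\{|y|\leq R\}$ but with bounded first derivatives, apply the chain rule for $\psi_R$, and let $R\to\infty$, using that $f \in L^2_{loc}$ and $\Qff \in L^2_{t,x}N_v^{-s,\gamma}$ to pass each term to the limit by dominated convergence. This is the only nontrivial step; everything else is algebraic substitution.
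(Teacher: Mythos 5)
Your proof is correct and takes essentially the same route as the paper: the paper's own proof is precisely the application of Lemmas \ref{lem:chain-sub--sol-inflow}, \ref{lem:chain-weak-sol-bounce-back} and \ref{lem:chain-weak-sol-specular} with $\psi(t,f)=\tfrac12\left(f-a(t)\right)_+^2$, and your verification of the hypotheses (in particular $\psi_y(t,0)=0$ from $a\geq 0$, boundedness of $\psi_{yy}$, $\psi_{ty}$ from $a$ Lipschitz), the vanishing of the $\BC_-$ boundary term when $f|_{\BC_-}\leq a(t)$, and the truncation in $R$ are exactly the details those lemmas encapsulate. Your caveat that in the bounce-back and specular cases the test function must respect the boundary symmetry applies equally to the paper's one-line proof, which implicitly restricts to such $\varphi$ (the only ones used later, e.g.\ $\varphi=\varphi(t)$ in Lemma \ref{lem:energy}).
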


\begin{proof}
Depending on the boundary condition, we apply Lemma \ref{lem:chain-sub--sol-inflow}, Lemma \ref{lem:chain-weak-sol-bounce-back} or Lemma \ref{lem:chain-weak-sol-specular} with $\psi(t,f) = \frac 12 \left(f-a(t)\right)_+^2$.
\end{proof}

\subsection{On the Truncated Dissipation of Energy}

Let $f(t,x,v)$ be a function defined on $D$ that satisfies the bounce-back, specular-reflection, or in-flow boundary condition.

Let $a=a(t)\geq0$. We write $f=\fb+\fr$, where $\fr:=(f-a)_+$ and $\fb\leq a$. Clearly, if $f>a$, then $\fb=a$ and $\fr=f-a$; if $f\leq a$, then $\fb=f$ and $\fr=0$. Further, we define
\begin{align} \label{e:m}
    m(t): =\iint_{\Omega\times\R^d}\abs{\fr(t,x,v)}^2\ud v\ud x.
\end{align}

Since the function $f$ belongs to $L^2((0,T)\times\Omega \times \R^d)$, then $m \in L^1((0,T))$. 

\begin{lemma}\label{lem:energy} 
    Let $f(t,x,v)$ be a weak solution of \eqref{eqn:boltzmann} satisfying the bounce-back, specular-reflection, or in-flow boundary condition. In the case of in-flow boundary, we assume that the boundary value is smaller than $a(t)$ everywhere. Then for almost all $t_1,t_2 \in (0,T)$ with $t_1<t_2$ we have
    \[m(t_2)-m(t_1) \leq 2\iiint_{(t_1,t_2)\times\Omega\times\R^d} \left\{\fr\, \Qff - a'(t) \fr \right\} \, \ud v \ud x \ud t.\]
    holds for almost all $t\in(0,T)$.
\end{lemma}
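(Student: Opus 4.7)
The plan is to use Corollary~\ref{cor:weak-ineq} with a separable test function of the form $\varphi(t,x,v)=\chi(t)\rho_R(v)$, and then send $R\to\infty$ and $\chi\to\one_{(t_1,t_2)}$. Here $\chi\in C^1_c((0,T))$ will be a nonnegative approximation of $\one_{(t_1,t_2)}$, and $\rho_R(v)=\rho(v/R)$ for a smooth radial cutoff $\rho$ with $\rho=1$ on $B_1$ and $\rho=0$ outside $B_2$. Note that $\rho_R$ is even and depends only on $|v|$, so it automatically satisfies the symmetry conditions that the test functions must satisfy in the bounce-back and specular-reflection cases. The function $\varphi$ is smooth, nonnegative, and compactly supported in $(0,T)\times\overline{\Omega}\times\R^d$, so Corollary~\ref{cor:weak-ineq} applies under any of the three boundary conditions (recalling the inflow assumption that $f|_{\BC_-}\leq a(t)$, which guarantees the positivity of the boundary integrand and was already used to derive the inequality in the corollary).

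Since $\trp[\chi(t)\rho_R(v)]=\chi'(t)\rho_R(v)$, Corollary~\ref{cor:weak-ineq} yields
\[
\iiint_{(0,T)\times\Omega\times\R^d}\left\{\tfrac12 \fr^2\,\chi'(t)\rho_R(v)+\fr\,\Qff\,\chi(t)\rho_R(v)-a'(t)\fr\,\chi(t)\rho_R(v)\right\}\ud v\ud x\ud t\geq 0.
\]
Next I would send $R\to\infty$. For the first and third terms, $\fr\in L^2_{t,x,v}$, so dominated convergence applies. For the middle term, observe that $\fr=(f-a)_+$ arises from composing $f$ with the $1$-Lipschitz function $y\mapsto(y-a)_+$, hence $\fr\in L^2_{t,x}N^{s,\gamma}_v$ by Lemma~\ref{lem:Nsgofcomposition}; then $\fr\chi\rho_R\in L^2_{t,x}N^{s,\gamma}_v$ by Lemma~\ref{lem:Nsgofproduct}, and dominated convergence directly in the defining integral \eqref{eq:normNsg} shows that $\fr\chi\rho_R\to\fr\chi$ in $L^2_{t,x}N^{s,\gamma}_v$. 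Pairing with $\Qff\in L^2_{t,x}N^{-s,\gamma}_v$ (from \eqref{q-estimate}) yields the limit
\[
\iiint_{(0,T)\times\Omega\times\R^d}\left\{\tfrac12 \fr^2\,\chi'(t)+\fr\,\Qff\,\chi(t)-a'(t)\fr\,\chi(t)\right\}\ud v\ud x\ud t\geq 0.
\]

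The final step is to choose $\chi=\chi_\delta$ to be a standard mollification of $\one_{(t_1,t_2)}$. Since $m\in L^1((0,T))$, the Lebesgue differentiation theorem implies that for almost every $t_1<t_2$ in $(0,T)$ we have
\[
\int_0^T m(t)\,\chi_\delta'(t)\,\ud t\;\xrightarrow{\delta\to 0}\; m(t_1)-m(t_2),
\]
while the other two terms converge to the corresponding integrals over $(t_1,t_2)$ by dominated convergence (using that $\fr\,\Qff\in L^1_{t,x,v}$ by the duality $L^2_{t,x}N^{s,\gamma}_v\times L^2_{t,x}N^{-s,\gamma}_v$, and that $\fr\in L^2\subset L^1_{loc}$). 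Rearranging gives exactly the stated inequality.

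The main technical point is the $R\to\infty$ limit for the quadratic term $\fr\,\Qff$: everything else is either linear or follows from standard Lebesgue-type arguments. That step is handled by the convergence $\fr\chi_\delta\rho_R\to\fr\chi_\delta$ in $L^2_{t,x}N^{s,\gamma}_v$ together with the $N^{-s,\gamma}$ bound on $\Qff$ furnished by Theorem~\ref{t:boundednessofQ}, so no serious obstacle remains.
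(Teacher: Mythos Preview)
Your proposal is correct and follows essentially the same approach as the paper: apply Corollary~\ref{cor:weak-ineq} with a test function approximating $\one_{(t_1,t_2)}$, then pass to the limit. In fact you are slightly more careful than the paper, which applies the corollary directly to a piecewise-linear function $\varphi_\ep(t)$ depending only on $t$ (thus implicitly skipping the velocity cutoff $\rho_R$ that you include and remove via Lemma~\ref{l:cutting-large-velocities}-type arguments); your observation that a radial $\rho_R$ automatically satisfies the bounce-back and specular symmetry conditions is exactly what is needed to make this step rigorous.
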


\begin{proof}
Let $t_1$ and $t_2$ be two Lebesgue points of the $L^1$ function $m(t)$. For any $\ep>0$, we apply Corollary~\ref{cor:weak-ineq} for 
\[ \varphi_\ep(t) = \begin{cases}
     \frac 1 \ep (t_2-t) & \text{ if } t \in [t_2-\ep,t_2) \\
     \frac 1 \ep (t-t_1) & \text{ if } t \in (t_1,t_1+\ep] \\
     1 & \text{ if } t \in (t_1+\ep,t_2-\ep) \\
     0 & \text{ elsewhere.}
\end{cases}
\]
We obtain
\[
\iiint_{(0,T)\times \Omega\times\R^d} \frac 12 f_r^2 \left( \frac 1 \ep \one_{(t_1,t_1+\ep)} - \frac 1 \ep \one_{(t_2-\ep,t_2)} \right) + f_r \Qff \varphi_\ep - a'(t) f_r \varphi_\ep \ \ud v\ud x\ud t \geq 0.
\]
This is the same as
\[ \frac 1 \ep \int_{t_1}^{t_1+\ep} \frac{m(t)}2 \ud t - \frac 1\ep \int_{t_2-\ep}^{t_2} \frac{m(t)}2 \ud t + \iiint_{(0,T)\times \Omega\times\R^d} f_r \Qff \varphi_\ep - a'(t) f_r \varphi_\ep \,\ud v\ud x\ud t \geq 0.  \]
We conclude the proof taking the limit as $\ep \to 0$.
\end{proof}

Note that the right hand side of Lemma \ref{lem:energy} converges to zero as $t_2-t_1 \to 0$. Thus, the function $m(t)$ must be semicontinuous and its values are well determined for all $t \in (0,T)$.

\begin{corollary} \label{cor:cadlag}
The function $m(t)$ is almost everywhere equal to a c\`adl\`ag function with countably many jump discontinuities that are all negative.
\end{corollary}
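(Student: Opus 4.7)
The plan is to reinterpret Lemma \ref{lem:energy} as saying that $m(t)$ is monotone nonincreasing modulo an absolutely continuous correction, and then invoke the classical fact that any almost-everywhere monotone function admits a c\`adl\`ag representative whose discontinuities are jumps of a definite sign.

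First I would verify that the right-hand side of Lemma \ref{lem:energy} has the form $F(t_2)-F(t_1)$ for an absolutely continuous $F$. Specifically, set
\[ G(t) := 2\iint_{\Omega\times\R^d}\bigl(\fr(t,x,v)\,\Qff(t,x,v)-a'(t)\fr(t,x,v)\bigr)\,\ud v\,\ud x, \]
and check $G\in L^1((0,T))$. The map $y\mapsto(y-a(t))_+$ is $1$-Lipschitz and vanishes at $0$ (using $a\ge 0$), so Lemma \ref{lem:Nsgofcomposition} gives $\fr\in L^2_{t,x}N^{s,\gamma}_v$ with norm bounded by that of $f$. Combined with $\Qff\in L^2_{t,x}N^{-s,\gamma}_v$ from Theorem \ref{t:boundednessofQ} and the duality pairing, the first term of $G$ lies in $L^1_{t,x}$, hence in $L^1_t$ after integration in $x$. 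The second term is bounded pointwise in $t$ by $\|a'\|_{L^\infty}M_0|\Omega|$ using \eqref{assumption:hydrodynamic}. Thus $F(t):=\int_0^t G(\tau)\,\ud\tau$ is absolutely continuous on $(0,T)$.

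Next I would introduce $h(t):=m(t)-F(t)$. Lemma \ref{lem:energy} then reads $h(t_2)\le h(t_1)$ for a.e.\ pair $t_1<t_2$. Let $E\subset(0,T)$ be the full-measure set where this inequality holds; then $h\bigl|_E$ is nonincreasing. Define the right-limit
\[ \tilde h(t):=\inf_{\substack{s\in E\\ s>t}}h(s)=\lim_{\substack{s\to t^+\\ s\in E}}h(s). \]
By monotonicity of $h$ on $E$, $\tilde h$ is well-defined, nonincreasing on $(0,T)$, and right-continuous. Since $E$ has full measure, $\tilde h(t)=h(t)$ at every Lebesgue point of $h$ where $h(t)$ equals the essential right-limit of $h|_E$ at $t$, which is a full-measure subset of $E$; in particular $\tilde h=h$ almost everywhere.

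Finally I set $\tilde m:=\tilde h+F$. Then $\tilde m=m$ almost everywhere, and since $F$ is continuous and $\tilde h$ is c\`adl\`ag, $\tilde m$ is c\`adl\`ag. A monotone function on an interval has at most countably many discontinuities, each of which is a jump; for a nonincreasing right-continuous function every such jump satisfies $\tilde h(t-)>\tilde h(t+)=\tilde h(t)$, i.e.\ it is a negative jump. Because $F$ is continuous, the discontinuities of $\tilde m$ coincide with those of $\tilde h$ and carry identical (negative) jump sizes, proving the corollary. No serious obstacle is anticipated; the only delicate point is choosing the c\`adl\`ag representative via a right-limit restricted to the full-measure set $E$ so that monotonicity of $\tilde h$ is preserved on all of $(0,T)$.
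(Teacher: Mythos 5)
Your proposal is correct and is in substance the same argument the paper compresses into a single line: the right-hand side of Lemma \ref{lem:energy} equals $F(t_2)-F(t_1)$ for an absolutely continuous $F$ (the integrand being in $L^1$ in time, by the duality pairing of $\Qff\in L^2_{t,x}N^{-s,\gamma}_v$ with $\fr\in L^2_{t,x}N^{s,\gamma}_v$ and the mass bound), so $m-F$ is nonincreasing on a full-measure set $E$ of times (one may take $E$ to be the Lebesgue points of $m$, as in the proof of Lemma \ref{lem:energy}) and one passes to its right-continuous monotone regularization. One small correction: since $h|_E$ is nonincreasing, its right limit at $t$ is $\sup_{s\in E,\ s>t} h(s)$, not $\inf_{s\in E,\ s>t} h(s)$ (taken literally, that infimum does not depend on $t$); with this sign fixed, the rest of your argument --- right-continuity, $\tilde h=h$ almost everywhere, countably many discontinuities, and downward jumps only --- goes through exactly as you describe.
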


\begin{proof}
Clearly, the right hand side in Lemma \ref{lem:energy} converges to zero as $t_2 \to t_1$ or $t_1 \to t_2$.
\end{proof}

After Corollary \ref{cor:cadlag}, it makes sense to refer to the values of $m(t)$ pointwise, and they satisfy the inequality of Lemma \ref{lem:energy} for all values of $t_1$ and $t_2$.

\section{Proof of the Main Theorem}

The purpose of this last section is to prove Theorem \ref{thm:main}. The strategy is to use Lemma \ref{lem:energy} to prove that $m(t)$ is nonincreasing as a function of $t$. Then we will see that $m(t)\to 0$ as $t \to 0$, concluding that $m \equiv 0$ and therefore $f \leq a(t)$. We need to first analyze the integrand in Lemma \ref{lem:energy} to prove that it is not positive.

\begin{lemma} \label{l:Q(f,f)fr}
Let $f :\R^d \to [0,\infty)$ be a nonnegative function in $L^1_2(\R^d) \cap LlogL(\R^d)$ satisfying the hydrodynamic bounds \eqref{assumption:hydrodynamic}. For any $a > 0$ large enough, we write 
$f_b(v) := \min(f(v),a)$ and $f_r(v) := \left(f(v)-a\right)_+$. Then for some constants $c>0$ and $C$ large, depending on the hydrodynamic bounds, we have
\[ \int_{\R^d} \Qff f_r \,\ud v \leq - c\, \|f_r\|_{L^p_n}^2 - c\, a^{1+2s/d} \|f_r\|_{L^1_\ell} + C \int_{\R^d} \left(f \ast |\cdot|^\gamma\right) f f_r \,\ud v. \]
Here, $p$ and $n$ are the exponents from Lemma \ref{lem:coercivity} and $\ell = \gamma+2s+2s/d$.
\end{lemma}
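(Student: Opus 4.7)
The plan is to expand $\int \Qff f_r \,\ud v$ using the decomposition $f = f_b + f_r$:
\[\int_{\R^d} \Qff f_r \,\ud v = \int_{\R^d} Q(f, f_b) f_r \,\ud v + \int_{\R^d} Q(f, f_r) f_r \,\ud v. \]
Applying the quadratic form identity \eqref{Q-fgg} to the second piece and the bilinear formula \eqref{Q-fgh} to the first, the two reaction contributions combine into
\[ c_b \int_{\R^d} (f \ast |\cdot|^\gamma) (f_b + f_r) f_r \,\ud v = c_b \int_{\R^d} (f \ast |\cdot|^\gamma) f\, f_r \,\ud v, \]
which matches the last term in the conclusion. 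What remains to control are the two diffusion contributions
\[ I_1 := -\tfrac{1}{2} \iint |f_r(v')-f_r(v)|^2 K_f(v,v') \,\ud v' \ud v, \qquad I_2 := \iint f_r(v) \bigl(f_b(v')-f_b(v)\bigr) K_f(v,v') \,\ud v' \ud v. \]

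For $I_1$ I invoke the coercivity estimate Lemma \ref{lem:coercivity} applied to $g = f_r$; when $n \geq 0$ this immediately yields $I_1 \leq -c_0 \nm{f_r}_{L^p_n}^2$, and in the regime $n < 0$ one uses the weaker form of the estimate together with the $L^1_\ell$ bound produced below, exploiting the hypothesis that $a$ is large enough to bury the subthreshold set $\{f_r(v) < C_1 \nm{f_r}_{L^p_n} \br{v}^k\}$ into that $L^1_\ell$ term.

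For $I_2$, the crucial observation is that $f_r(v) > 0$ forces $f_b(v) = a$, so
\[ I_2 = -\iint f_r(v)\bigl(a - f_b(v')\bigr) K_f(v,v') \,\ud v' \ud v \leq 0. \]
I make this quantitative using the cone of non-degeneracy of $K_f$ from Subsection~\ref{Sec:opt-str}: for each $v$ with $f_r(v) > 0$ and each radius $R > 0$, the set $v + (B_R \cap \Xi(v))$ has measure $\gtrsim R^d \br{v}^{-1}$, and on it $K_f(v,v') \gtrsim \br{v}^{\gamma+2s+1} R^{-d-2s}$. A pigeonhole using $\int_{B_R(v)} f \,\ud v' \leq M_0$ shows that as soon as $R^d \br{v}^{-1} \gtrsim M_0/a$, at least half of this set satisfies $f(v') \leq a/2$, and there $a - f_b(v') \geq a/2$. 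Choosing $R \approx (M_0 \br{v}/a)^{1/d}$ then gives
\[ \int_{\R^d} \bigl(a - f_b(v')\bigr) K_f(v,v') \,\ud v' \gtrsim a \br{v}^{\gamma+2s} R^{-2s} \gtrsim a^{1+2s/d} \br{v}^{\gamma+2s-2s/d}, \]
and integrating against $f_r(v)\,\ud v$ produces the claimed $-c\,a^{1+2s/d} \nm{f_r}_{L^1_\ell}$ term, the exponent $\ell$ being dictated by this scaling.

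The main obstacle I expect is the pigeonhole step inside $I_2$: carefully choosing $R$ to balance the thinness of the cone of non-degeneracy (volume factor $\br{v}^{-1}$) against the constraint forced by the mass bound, and tracking how the resulting weight interacts with the targeted exponent in $\ell$. A secondary subtlety is the soft-potential case $n < 0$, where the gap between the weak form of Lemma \ref{lem:coercivity} and the sharp $-c\nm{f_r}_{L^p_n}^2$ must be closed by absorbing the subthreshold region into the $L^1_\ell$ bound just obtained, which is what forces the hypothesis ``$a$ large enough.''
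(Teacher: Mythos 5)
Your overall strategy coincides with the paper's: split $\Qff = Q(f,\fb) + Q(f,\fr)$, use the coercivity estimate of Lemma \ref{lem:coercivity} on the quadratic term, and use the cone of non-degeneracy with a pigeonhole to extract a negative term from $\int Q(f,\fb)\fr$. However, there is a genuine quantitative gap in the pigeonhole step for $I_2$. You run the pigeonhole with only the mass bound $\int_{B_R(v)} f \,\ud v' \leq M_0$ on a set of measure $\approx R^d \br{v}^{-1}$, which forces $R^d \approx M_0 \br{v}/a$, hence $R^{-2s} \approx (a/(M_0\br{v}))^{2s/d}$ and
\[
\int_{\R^d} \bigl(a-\fb(v')\bigr) K_f(v,v')\,\ud v' \gs a \br{v}^{\gamma+2s} R^{-2s} \approx a^{1+2s/d} \br{v}^{\gamma+2s-2s/d},
\]
exactly as you wrote. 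But $\gamma+2s-2s/d = \ell - 4s/d < \ell$, so integrating against $\fr$ gives $-c\,a^{1+2s/d}\|\fr\|_{L^1_{\ell-4s/d}}$, and since $\|\fr\|_{L^1_{\ell-4s/d}} \leq \|\fr\|_{L^1_{\ell}}$ this is a strictly \emph{weaker} negative term than the claimed $-c\,a^{1+2s/d}\|\fr\|_{L^1_\ell}$; the exponent is not ``dictated by this scaling'' as you assert. The loss matters: the weight $\ell$ is used downstream (in Lemma \ref{l:badterm} and Lemma \ref{l:mdecreasing}) to absorb the positive term $C(\ep)\,a^{1+\gamma_-/d}\|\fr\|_{L^1_\ell}$, and a negative term with a smaller weight cannot be upgraded to do this job. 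The fix is the one the paper uses: since $a$ is large, the relevant radius satisfies $R\leq 1$, so $\br{v'} \gs \br{v}$ on $v+\left(B_R\cap\Xi(v)\right)$, and one can run the pigeonhole against the \emph{energy-weighted} bound $\|\fb\|_{L^1_2}\ls M_0+E_0$ rather than the plain mass bound. This yields the much smaller admissible radius $R^d \approx (M_0+E_0)\,a^{-1}\br{v}^{-1}$, whence $a\br{v}^{\gamma+2s}R^{-2s} \approx a^{1+2s/d}\br{v}^{\gamma+2s+2s/d} = a^{1+2s/d}\br{v}^{\ell}$, which is the stated weight. The factor $\br{v}^{-2}$ gained in $R^d$ (i.e.\ $\br{v}^{4s/d}$ in $R^{-2s}$) accounts precisely for your discrepancy.

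A secondary point: your treatment of the soft-potential case $n<0$ of the coercivity estimate is only gestured at. The paper makes it quantitative: on the subthreshold set $\{\abs{\fr} < C_1\|\fr\|_{L^p_n}\br{v}^k\}$ one bounds $\br{v}^{np}\abs{\fr}^p \ls \|\fr\|_{L^p_n}^{p-1}\br{v}^{np+(p-1)k}\abs{\fr}$ with $np+(p-1)k = \tfrac12(1+\gamma-d) \leq \ell/2$, which after multiplying by $\|\fr\|_{L^p_n}^{2-p}$ leaves a cross term $\|\fr\|_{L^p_n}\|\fr\|_{L^1_{\ell/2}}$; this is then absorbed via Young and the Cauchy--Schwarz bound $\|\fr\|_{L^1_{\ell/2}}^2 \leq \|\fr\|_{L^1}\|\fr\|_{L^1_\ell} \leq M_0\|\fr\|_{L^1_\ell}$, using that $a$ is large. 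Your sketch points in this direction but omits the exponent computation $np+(p-1)k\leq \ell/2$ that makes the absorption into the $L^1_\ell$ term legitimate.
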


\begin{proof}
We split the left hand side using $\Qff = Q(f,f_b) + Q(f,f_r)$ and estimate a bound for each of the two terms.

Using \eqref{Q-fgh}, we write
\begin{align} \label{I-1}
    \int_{\R^d}Q(f,\fb)(v)\fr(v)\,\ud v
    =&\iint_{\R^d\times\R^d}\fr(v)\left(\fb(v')-\fb(v)\right)K_f(v,v')\,\ud v'\ud v\\
    &+c\int_{\R^d}\left(f\ast_v\abs{v}^{\gamma}\right)\fb(v)\fr(v)\,\ud v.\no
\end{align}
We estimate the first term in \eqref{I-1} using the properties of the non-degeneracy cone $\Xi(v)$ for $K_f$.

Recall that $K_f(v,v')\geq0$ everywhere and that by \eqref{K-lower-bound}
\begin{align*} 
    K_f(v,v') \gtrsim \langle v \rangle^{\gamma+2s+1} |v-v'|^{-d-2s}
    \;\;\text{ if }\; v'-v \in \Xi(v) .
\end{align*}
Moreover, observe that in the support of $\fr(v)$, we have $\fb(v)=a$. 
Also, $\fb(v')\leq a$ and $\fb\leq f$ everywhere, and thus $\nm{\fb}_{L^1_2(\R^d)}\ls (M_0+E_0)$.
For any fixed $v\in\R^d$, by taking $R=R(v)>0$ such that $R^d=Ca^{-1}\br{v}^{-1}(M_0+E_0)$ for some sufficiently large constant $C$, we ensure that $\fb(v')\leq \frac{a}{2}$ for at least half of the points $v'\in v + \left(B_R \cap \Xi(v)\right)$ and the measure $\abs{B_R \cap \Xi(v)} \approx R^d \langle v \rangle^{-1}$.

We therefore obtain
\begin{align*} 
    \int_{\R^d} \left(\fb(v')-\fb(v)\right) K_f(v,v')\,\ud v'
    &\leq\int_{v + (B_R \cap \Xi(v))}\left(\fb(v')-\fb(v)\right)K_f(v,v')\,\ud v'\\
    &\ls \langle v \rangle^{\gamma+2s+1} R^{-d-2s} \int_{v + (B_R \cap \Xi(v))}\left(\fb(v')-\fb(v)\right)\,\ud v' \\
    &\ls -a \langle v \rangle^{\gamma+2s} R^{-2s}
    \approx -a^{1+\frac{2s}{d}}\br{v}^{\gamma+2s+\frac{2s}{d}} = -a^{1+\frac{2s}{d}}\br{v}^\ell.
\end{align*}

Thus,
\begin{align} \label{tt 02}
\int_{\R^d}Q(f,\fb)(v)\fr(v)\,\ud v &\leq -a^{1+\frac{2s}{d}} \|f_r\|_{L^1_\ell}+c\int_{\R^d}\left(f\ast_v\abs{v}^{\gamma}\right)\fb(v)\fr(v)\,\ud v.
\end{align}

We then move on to estimate the term that involves $Q(f,f_r)$.
Following \eqref{Q-fgg}, we write
\begin{align*}
    \int_{\R^d}Q(f,\fr)(v)\fr(v)\,\ud v=&-\frac{1}{2}\iint_{\R^d\times\R^d}\abs{\fr(v')-\fr(v)}^2K_f(v,v')\,\ud v'\ud v\\
    &+c\int_{\R^d}\left(f\ast_v\abs{v}^{\gamma}\right)\abs{\fr(v)}^2\ud v.
\end{align*}
Applying Lemma~\ref{lem:coercivity} to $g=\fr$,
the first term above can be bounded as
\begin{align} \label{ttt}
    \iint_{\R^d\times\R^d}\abs{\fr(v')-\fr(v)}^2K_f(v,v')\,\ud v'\ud v &\gs 
    \nm{\fr}_{L_n^p}^{2-p}
    \cdot\! \int_{\left\{\abs{\fr(v)}\geq C_1\nm{\fr}_{L_n^p}\br{v}^k\right\}} \langle v \rangle^{np} |\fr(v)|^p \,\ud v \\
    &= \pnnm{\fr}{p}{n}^2 - \nm{\fr}_{L_n^p}^{2-p}
    \cdot\!\int_{\left\{\abs{\fr(v)}< C_1\nm{\fr}_{L_n^p}\br{v}^k\right\}} \langle v \rangle^{np} |\fr(v)|^p \,\ud v , \no
\end{align}
with $p$ and $n$ as in Lemma \ref{lem:coercivity} and $k=\frac{1}{2}\left(-\gamma-d+1\right)$. We also have 
\begin{align*}
    \int_{\left\{\abs{\fr(v)}< C_1\nm{\fr}_{L_n^p}\br{v}^k\right\}} \langle v \rangle^{np} |\fr(v)|^p \,\ud v 
    &\ls \nm{\fr}_{L_n^p}^{p-1} \cdot\!\int_{\R^d} \langle v \rangle^{np+(p-1)k} |\fr(v)| \,\ud v \\
    &\leq \nm{\fr}_{L_n^p}^{p-1} \nm{\fr}_{L^1_{\ell/2}} ,
\end{align*}
observing that the exponent $np+(p-1)k=\frac{1}{2}(1+\gamma-d)\leq \ell/2$.
Hence, we find that
\begin{align}\label{tt 03}
    \int Q(f,f_r) f_r \,\ud v  \leq &\,-c \pnnm{\fr}{p}{n}^2 + C \pnnm{\fr}{p}{n}\nm{\fr}_{L^1_{\ell/2}} + C \int_{\R^d}\left(f\ast_v\abs{v}^{\gamma}\right)\fr^2\,\ud v.
\end{align}

Combining \eqref{tt 02} and \eqref{tt 03}, we have
\begin{align}\label{tt 04}
    \int \Qff f_r \,\ud v \leq & -c \pnnm{\fr}{p}{n}^2- c\, a^{1+\frac{2s}{d}} \pnnm{\fr}{1}{\ell} + C \pnnm{\fr}{p}{n}\nm{\fr}_{L^1_{\ell/2}} + C\int_{\R^d}\left(f\ast_v\abs{v}^{\gamma}\right)f\fr\,\ud v
\end{align}

We observe that $\pnnm{\fr}{p}{n}\nm{\fr}_{L^1_{\ell/2}} \leq c \pnnm{\fr}{p}{n}^2 + c^{-1} \nm{\fr}_{L^1_{\ell/2}}^2$. Moreover, from the Cauchy–Schwarz inequality $\nm{\fr}_{L^1_{\ell/2}} \leq \nm{\fr}_{L^1}^{1/2} \nm{\fr}_{L^1_{\ell}}^{1/2}$. Therefore $\pnnm{\fr}{p}{n}\nm{\fr}_{L^1_{\ell/2}} \leq c \pnnm{\fr}{p}{n}^2 + C M_0 \nm{\fr}_{L^1_{\ell}}$. Thus, the third term in \eqref{tt 04} can be absorbed by the first two provided that $a$ is large enough. We finally simplify our estimate to
\[ \int \Qff f_r \,\ud v \leq -c \pnnm{\fr}{p}{n}^2- c a^{1+\frac{2s}{d}} \pnnm{\fr}{1}{\ell} + C\int_{\R^d}\left(f\ast_v\abs{v}^{\gamma}\right)f\fr\,\ud v. \]
\end{proof}

The right hand side in the inequality provided by Lemma \ref{l:Q(f,f)fr} contains two negative terms and a positive one. To prove our main theorem, we must estimate the postive term in terms of the negative terms. The next few lemmas provide some upper bounds that will be useful to that effect.

For the next lemmas, recall that $f = f_b + f_r$, where $f_b(v) = \min(f(v),a)$ and $f_r(v) = \left(f(v)-a\right)_+$. Moreover,
\begin{equation} \label{e:hydro-upper-bounds} \int_{\R^d} f(v) \,\ud v \leq M_0, \qquad \int_{\R^d} |v|^2 f(v) \,\ud v \leq E_0. \end{equation}

The following two lemmas are relatively standard. They are already proved in \cite{silvestre2016}. We include them here for completeness.

\begin{lemma} \label{l:conv-f-hard}
Let $f: \R^d \to [0,\infty)$ satisfy \eqref{e:hydro-upper-bounds}. Assume $\gamma \in [0,2]$. Then, for any $v \in \R^d$ we have
\[ \int_{\R^d} f(v-w) |w|^\gamma \,\ud w  \ls E_0 + \langle v \rangle^{2} M_0. \]
\end{lemma}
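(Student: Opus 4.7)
The plan is to reduce the convolution to a direct integral of $f$ against a weight in the original variable, and then exploit the elementary estimate $|v-u|^\gamma \lesssim \langle v\rangle^2 + |u|^2$ that is available precisely because $\gamma \in [0,2]$.

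First I would make the change of variables $u = v-w$, under which the claimed integral becomes
\[
\int_{\R^d} f(v-w)|w|^\gamma\,\ud w = \int_{\R^d} f(u)|v-u|^\gamma\,\ud u.
\]
Next, I would use the elementary inequality $a^\gamma \leq 1+a^2$, valid for all $a\geq 0$ and $\gamma\in[0,2]$, applied to $a = |v-u|$, followed by $|v-u|^2 \leq 2|v|^2 + 2|u|^2$. This gives the pointwise bound
\[
|v-u|^\gamma \;\leq\; 1 + 2|v|^2 + 2|u|^2 \;\lesssim\; \langle v\rangle^2 + |u|^2.
\]

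Substituting this into the integral and using the two hydrodynamic bounds \eqref{e:hydro-upper-bounds} yields
\[
\int_{\R^d} f(u)|v-u|^\gamma\,\ud u \;\lesssim\; \langle v\rangle^2 \int_{\R^d} f(u)\,\ud u + \int_{\R^d} |u|^2 f(u)\,\ud u \;\leq\; \langle v\rangle^2 M_0 + E_0,
\]
which is exactly the claim. There is no genuine obstacle here; the only thing to watch is that the inequality $a^\gamma \leq 1+a^2$ fails for $\gamma>2$, so the range $\gamma\in[0,2]$ is essential and is used in one spot only.
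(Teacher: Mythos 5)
Your proof is correct and follows essentially the same route as the paper, which also reduces the claim to a one-line pointwise bound ($|v-w|^\gamma \ls |v|^\gamma + |w|^\gamma$ followed by the mass and energy bounds) rather than your equivalent $a^\gamma \leq 1+a^2$ expansion. Both use $\gamma\in[0,2]$ in exactly one spot and yield the stated estimate, so there is nothing to add.
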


\begin{proof} The lemma follows by the following computation:
\begin{align*}
    \int_{\R^d}f(w)\abs{v-w}^{\gamma}\,\ud w\ls \int_{\R^d}f(w)\left(\abs{v}^{\gamma}+\abs{w}^{\gamma}\right)\,\ud w\ls M_0\br{v}^{\gamma}+E_0
    \ls \br{v}^{\gamma}.
\end{align*}
\end{proof}

\begin{lemma} \label{l:conv-fb}
Let $f: \R^d \to [0,\infty)$ satisfy \eqref{e:hydro-upper-bounds}. Assume $\gamma \in (-d,0]$. Then, for any $v \in \R^d$ we have
\[ \int_{\R^d} f_b(v-w) |w|^\gamma \,\ud w  \leq C M_0^{1+\gamma/d} a^{-\gamma/d}.
\]
\end{lemma}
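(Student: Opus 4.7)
The plan is a standard split-the-integral optimization, exploiting the two distinct bounds available for $f_b$: the pointwise bound $f_b \leq a$ and the integral bound $\int f_b \leq M_0$.

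First I would split the integral at some radius $R > 0$ to be chosen later:
\[ \int_{\R^d} f_b(v-w)\,|w|^\gamma \,\ud w = \int_{|w|<R} f_b(v-w)\,|w|^\gamma \,\ud w + \int_{|w|\geq R} f_b(v-w)\,|w|^\gamma \,\ud w. \]
On the inner region $|w| < R$, I would use the pointwise bound $f_b \leq a$ together with the fact that $|w|^\gamma$ is locally integrable (since $\gamma > -d$), yielding
\[ \int_{|w|<R} f_b(v-w)\,|w|^\gamma \,\ud w \leq a \int_{|w|<R} |w|^\gamma \,\ud w \lesssim a\, R^{d+\gamma}. \]
On the outer region $|w| \geq R$, I would use $\gamma \leq 0$ to get $|w|^\gamma \leq R^\gamma$, and then bound $f_b \leq f$ and use $\int f \leq M_0$:
\[ \int_{|w|\geq R} f_b(v-w)\,|w|^\gamma \,\ud w \leq R^\gamma \int_{\R^d} f(v-w)\,\ud w \leq M_0\, R^\gamma. \]

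The two contributions balance when $a R^{d+\gamma} \approx M_0 R^\gamma$, i.e. when $R^d \approx M_0/a$. Choosing $R = (M_0/a)^{1/d}$ gives
\[ a R^{d+\gamma} = a \cdot (M_0/a)^{(d+\gamma)/d} = M_0^{1+\gamma/d}\, a^{-\gamma/d}, \qquad M_0 R^\gamma = M_0 \cdot (M_0/a)^{\gamma/d} = M_0^{1+\gamma/d}\, a^{-\gamma/d}, \]
and summing these produces the desired bound. There is no real obstacle here; the only thing to verify is that the exponent $d+\gamma > 0$ makes the inner integral finite, which is guaranteed by the assumption $\gamma \in (-d,0]$. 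The uniformity in $v$ is automatic since both bounds used are translation-invariant.
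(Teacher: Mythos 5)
Your proof is correct and follows essentially the same route as the paper: split at a radius $R$, use $f_b \leq a$ with local integrability of $|w|^\gamma$ on $B_R$, use $|w|^\gamma \leq R^\gamma$ and the mass bound outside, and optimize with $R = (M_0/a)^{1/d}$. Nothing is missing.
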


\begin{proof}
For any $R>0$, we split the integral between $w \in B_R$ and the rest.
\begin{align*} \int_{\R^d} f_b(v-w) |w|^\gamma \,\ud w  &= \int_{B_R} f_b(v-w) |w|^\gamma \,\ud w + \int_{\R^d \setminus B_R} f_b(v-w) |w|^\gamma \,\ud w \\
&\leq a \int_{B_R} |w|^\gamma \ud w + R^\gamma \|f_b\|_{L^1} \\
&\ls a R^{d+\gamma} + R^\gamma M_0
\end{align*}
We finish the proof choosing $R = (M_0/a)^{1/d}$.
\end{proof}

\begin{lemma} \label{l:conv-fr}
Let $f: \R^d \to [0,\infty)$ satisfy \eqref{e:hydro-upper-bounds}. Let $\gamma<0$ and let $p$ and $n$ be the exponents of Lemma \ref{lem:coercivity}. Then, for any $v \in \R^d$, we have
\[ \int_{\R^d} f_r(v-w) |w|^\gamma \,\ud w  \leq C \left( \|f_r\|_{L^p_n}^{-2\gamma/(d+2s)} \|f_r\|_{L^1}^{1+2\gamma/(d+2s)} \langle v \rangle^m + \|f_r\|_{L^1} \right).
\]
Here, $C$ is a constant depending on the dimension $d$, $s$ and $\gamma$ only, and $m = 2\gamma n / (d+2s)$.

Note that $m \leq 0$ if $n\geq 0$. Moreover, we always have $m \leq 2s\ell/(d+2s)$.
\end{lemma}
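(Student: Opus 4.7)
The plan is a classical split of the integral according to the size of the singular factor $|w|^\gamma$, combined with H\"older's inequality and optimization over the split radius. After the change of variables $u = v - w$, the integral becomes $\int_{\R^d} f_r(u) |v-u|^\gamma \ud u$. For a parameter $R > 0$, write this as the near piece on $\{|v-u| \leq R\}$ plus the far piece on $\{|v-u| > R\}$. The far piece is controlled by $R^\gamma \|f_r\|_{L^1}$ since $\gamma < 0$.

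For the near piece, apply H\"older's inequality with conjugate exponents $p$ and $p' = 2d/(d+2s)$ to the splitting $f_r(u) |v-u|^\gamma = \bigl(\langle u \rangle^n f_r(u)\bigr) \cdot \bigl(\langle u \rangle^{-n} |v-u|^\gamma\bigr)$, obtaining
\[
\int_{|v-u| \leq R} f_r(u) |v-u|^\gamma \ud u \leq \|f_r\|_{L^p_n} \left( \int_{|v-u| \leq R} \langle u \rangle^{-np'} |v-u|^{\gamma p'} \ud u \right)^{1/p'}.
\]
Local integrability of $|v-u|^{\gamma p'}$ requires $\gamma p' > -d$, i.e.\ $\gamma > -(d+2s)/2$, which is strictly weaker than the standing assumption $\gamma+2s > 0$ (since $d \geq 2 > 2s$). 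Restricting to $R \leq \langle v \rangle/2$, on the near domain one has $\langle u \rangle \approx \langle v \rangle$ regardless of the sign of $n$, so $\langle u \rangle^{-np'} \lesssim \langle v \rangle^{-np'}$ uniformly. The remaining radial integral is $\lesssim R^{d + \gamma p'}$, yielding a near-piece bound of $C\|f_r\|_{L^p_n} \langle v \rangle^{-n} R^{\gamma + (d+2s)/2}$.

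Adding the two pieces and balancing in $R$ produces the optimal choice $R^{\ast} = c\bigl(\langle v \rangle^n \|f_r\|_{L^1}/\|f_r\|_{L^p_n}\bigr)^{2/(d+2s)}$, whose substitution recovers exactly
\[ C\, \langle v \rangle^{m} \|f_r\|_{L^1}^{1 + 2\gamma/(d+2s)} \|f_r\|_{L^p_n}^{-2\gamma/(d+2s)}, \]
the desired first term, whenever $R^{\ast} \leq \langle v \rangle /2$. Otherwise, one takes the constrained choice $R = \langle v \rangle /2$; since the near-piece bound is increasing in $R$ and the far piece decreasing, with equality at $R^{\ast}$, for $R < R^{\ast}$ the far piece dominates, so the total is bounded by $C(\langle v \rangle /2)^\gamma \|f_r\|_{L^1} \leq C \|f_r\|_{L^1}$ (using $\gamma < 0$ and $\langle v \rangle \geq 1$). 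This constrained regime is exactly what forces the $\|f_r\|_{L^1}$ fallback in the statement; it is the only real subtlety, and the closing observations $m \leq 0$ (when $n \geq 0$) and $m \leq 2s\ell/(d+2s)$ follow by elementary algebra from $\gamma < 0$ and $\gamma + 2s > 0$.
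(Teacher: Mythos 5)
Your proof is correct and follows essentially the same route as the paper: split the convolution at a radius $R$, apply H\"older with exponents $p$ and $p'=2d/(d+2s)$ on the near piece using $\langle u\rangle\approx\langle v\rangle$ to extract the weight $\langle v\rangle^{-n}$, bound the far piece by $R^\gamma\|f_r\|_{L^1}$, and balance in $R$ with a fallback choice when the balanced radius is inadmissible. The only cosmetic difference is your cap $R\leq\langle v\rangle/2$ where the paper restricts $R\in(0,1)$ and takes $R=1$ in the fallback case; both give the stated bound.
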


\begin{proof}
For any $R \in (0,1)$, we split the integral between $w \in B_R$ and the rest. 
\begin{align*} \int_{\R^d} f_r(v-w) |w|^\gamma \,\ud w  &= \int_{B_R} f_r(v-w) |w|^\gamma \,\ud w + \int_{\R^d \setminus B_R} f_r(v-w) |w|^\gamma \,\ud w \\
&\leq \|f_r\|_{L^p(B_R(v))} \| |w|^\gamma \|_{L^{p'}(B_R)} + R^{\gamma} \|f_r\|_{L^1} \\
&\ls \langle v \rangle^{-n} \|f_r\|_{L^p_n} R^{\gamma+d/2+s} + R^{\gamma} \|f_r\|_{L^1}
\end{align*}

If $\|f_r\|_{L^1} \leq \langle v \rangle^{-n} \|f_r\|_{L^p_n}$, we take $R^{d/2+s} = \langle v \rangle^{n} \|f_r\|_{L^1} / \|f_r\|_{L^p_n}$. Otherwise, we take $R = 1$.

In the first case, we get
\[ \int f_r(v-w) |w|^\gamma \,\ud w \ls
\|f_r\|_{L^p_n}^{-2\gamma/(d+2s)} \|f_r\|_{L^1}^{1+2\gamma/(d+2s)} \langle v \rangle^{2 \gamma n/(d+2s)}
\]

In the second case, we get
\[ \int f_r(v-w) |w|^\gamma \,\ud w  \ls \|f_r\|_{L^1}. \]
In either case, we finish the proof. The inequalities indicated for $m$ are immediately verified from its formula provided that $\gamma+2s \geq 0$, after noticing that $-n < \ell/2$.
\end{proof}

\begin{lemma} \label{l:fr2-a}
Let $f_r:\R^d \to [0,\infty)$. Let $\ell = \gamma+2s +2s/d$ and $n$, $p$ be the exponents of Lemma \ref{lem:coercivity}. Then, for any $q \in \R$, we have
\[ \int_{\R^d} \langle v \rangle^{q} f_r^2 \,\ud v \lesssim \|f_r\|_{L^p_n}^{2d/(d+2s)} \|f_r\|_{L^1_m}^{4s/(d+2s)},\]
where $m = \frac 12 \left( \frac{d}{2s} (q - \gamma) - d + 1 + q \right)$.

In particular, for $q=\ell$, we get $m = (\ell+2)/2$, for $q=2s\ell/(d+2s)$, we get $m \leq (\ell+1)/2$, and for $q=0$, we get $m = \frac 12 \left( - \frac{d (\gamma+2s)}{2s} + 1 \right) \leq 1/2$.
\end{lemma}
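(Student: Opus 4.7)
The plan is to prove the inequality via a weighted H\"older interpolation, treating $L^2$ as the geometric mean (in the sense of log-convexity of $L^p$ norms) between the weighted $L^p$ and weighted $L^1$ norms. Set $A := \frac{2d}{d+2s}$ and $B := \frac{4s}{d+2s}$. The key numerical observations are $A+B = 2$ (so the product $f_r^A \cdot f_r^B = f_r^2$ matches the integrand) and, using $\frac1p = \frac12 - \frac{s}{d}$ so that $p = \frac{2d}{d-2s}$, that $\frac{A}{p} + B = \frac{d-2s}{d+2s} + \frac{4s}{d+2s} = 1$, so H\"older with conjugate exponents $\frac{p}{A}$ and $\frac{1}{B}$ applies.

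I would first verify the weight-balance identity $An + Bm = q$: a direct computation with $n = \frac12(\gamma+2s-\frac{2s}{d})$ gives $An = \frac{d(\gamma+2s)-2s}{d+2s}$, and solving $An + Bm = q$ for $m$ yields exactly $m = \frac12\bigl(\tfrac{d}{2s}(q-\gamma) - d + 1 + q\bigr)$ as stated. With this identity in hand, I would write
\begin{align*}
\int_{\R^d} \langle v\rangle^{q} f_r^2 \,\ud v
&= \int_{\R^d} \bigl(\langle v\rangle^{n} f_r\bigr)^{A} \bigl(\langle v\rangle^{m} f_r\bigr)^{B} \,\ud v,
\end{align*}
and apply H\"older's inequality with the exponents $p/A$ and $1/B$ to obtain
\begin{align*}
\int_{\R^d} \langle v\rangle^{q} f_r^2 \,\ud v
&\leq \left(\int_{\R^d} \langle v\rangle^{np} f_r^{p}\,\ud v\right)^{A/p} \left(\int_{\R^d} \langle v\rangle^{m} f_r\,\ud v\right)^{B}
= \|f_r\|_{L^p_n}^{A}\, \|f_r\|_{L^1_m}^{B},
\end{align*}
which is the desired bound.

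For the three specific values of $q$ listed at the end of the statement, I would just substitute into the formula for $m$. For $q=\ell = \gamma+2s+\frac{2s}{d}$, the terms $\frac{d}{2s}(q-\gamma) = d+1$ combine cleanly to give $m = (\ell+2)/2$. For $q=0$, $m = \frac12\bigl(1 - \frac{d(\gamma+2s)}{2s}\bigr) \leq \frac12$ because $\gamma+2s>0$. For $q = 2s\ell/(d+2s)$, one computes $m = \frac12\bigl(\gamma+2s+\frac{2s}{d}+1 - (d + \frac{d\gamma}{2s})\bigr)$ and the comparison with $(\ell+1)/2$ reduces to $\gamma+2s \geq 0$, which holds by hypothesis.

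There is no real obstacle here: the entire content is the verification of the Lebesgue-exponent arithmetic ($A+B=2$, $A/p+B=1$) and the weight arithmetic ($An+Bm=q$). Once those two identities are confirmed, H\"older delivers the inequality in one line, and the three special-value claims are routine substitutions.
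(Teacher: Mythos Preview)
Your proof is correct and is essentially identical to the paper's own argument: both apply H\"older's inequality with the same splitting $f_r^2 = f_r^{\alpha_1} f_r^{\alpha_2}$ and the same weight balance $\alpha_1 n + \alpha_2 m = q$, where your $A,B$ are the paper's $\alpha_1,\alpha_2$. The only cosmetic difference is that the paper sets up the three linear conditions $\alpha_1/p+\alpha_2=1$, $\alpha_1+\alpha_2=2$, $\alpha_1 n+\alpha_2 m=q$ and solves for $\alpha_1,\alpha_2,m$, whereas you fix $A,B$ from the start and verify the identities directly.
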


\begin{proof}
Applying H\"older's inequality, we observe that
\begin{align*}
\int_{\R^d} \langle v \rangle^q f_r^2 \,\ud v &\leq \left( \int \langle v \rangle^{np} f_r^p \right)^{\alpha_1/p} \left( \int \langle v \rangle^{m} f_r \right)^{\alpha_2},
\end{align*}
provided that $\alpha_1 \geq 0$ and $\alpha_2 \geq 0$ satisfy
\begin{align*}
\frac{\alpha_1}p + \alpha_2 &= 1, \\
\alpha_1 + \alpha_2 &= 2, \\
\alpha_1 n + \alpha_2 m &= q.
\end{align*}
Given the choices $1/p = 1/2 - s/d$ and $n = (\gamma+2s-2s/d)/2$, there is only one choice of $\alpha_1$, $\alpha_2$ and $m$ that makes the three identities above hold. They are
\begin{align*} 
\alpha_1 &= \frac{2d}{d+2s}, \\
\alpha_2 &= \frac{4s}{d+2s}, \\
m &= \frac 12 \left( (q-\gamma-2s) \frac{d+2s}{2s} + \gamma+2s+1 \right).
\end{align*}
\end{proof}

\begin{lemma} \label{l:fr2-b}
Let $f_r:\R^d \to [0,\infty)$. Let $\ell = \gamma+2s +2s/d$ and $n$, $p$ be the exponents of Lemma \ref{lem:coercivity}. Then, for any any arbitrary small $\ep > 0$, there is a (presumably large) constant $C(\ep)$ so that
\[ \int_{\R^d} \langle v \rangle^{\ell} f_r^2 \,\ud v \lesssim \ep \|f_r\|_{L^p_n}^2 + C(\ep) \|f_r\|_{L^1_\ell} \|f_r\|_{L^1_2}.\]
\end{lemma}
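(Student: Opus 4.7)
The plan is to deduce this from Lemma \ref{l:fr2-a} with $q = \ell$, followed by Young's inequality to split the factors and Cauchy--Schwarz to reassemble the $L^1$ weight.

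First, I would apply Lemma \ref{l:fr2-a} with $q=\ell$. The computation in that lemma with $q = \ell = \gamma+2s+2s/d$ yields $m = (\ell+2)/2$, so
\[
\int_{\R^d} \langle v \rangle^{\ell} f_r^2 \,\ud v \lesssim \|f_r\|_{L^p_n}^{2d/(d+2s)} \|f_r\|_{L^1_{(\ell+2)/2}}^{4s/(d+2s)}.
\]

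Next, I would apply Young's inequality $XY \leq \ep X^\alpha + C(\ep) Y^{\alpha'}$ with conjugate exponents $\alpha = (d+2s)/d$ and $\alpha' = (d+2s)/(2s)$. Taking $X = \|f_r\|_{L^p_n}^{2d/(d+2s)}$ and $Y = \|f_r\|_{L^1_{(\ell+2)/2}}^{4s/(d+2s)}$, the exponents on the right become exactly $2$, producing
\[
\int_{\R^d} \langle v \rangle^{\ell} f_r^2 \,\ud v \lesssim \ep \|f_r\|_{L^p_n}^2 + C(\ep) \|f_r\|_{L^1_{(\ell+2)/2}}^2.
\]

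Finally, I would handle the weighted $L^1$ term by Cauchy--Schwarz, splitting the weight $\langle v \rangle^{(\ell+2)/2} = \langle v \rangle^{\ell/2} \cdot \langle v \rangle$ between two copies of $f_r^{1/2}$:
\[
\|f_r\|_{L^1_{(\ell+2)/2}} = \int_{\R^d} \langle v \rangle^{\ell/2} f_r^{1/2} \cdot \langle v \rangle f_r^{1/2} \,\ud v \leq \|f_r\|_{L^1_\ell}^{1/2} \|f_r\|_{L^1_2}^{1/2}.
\]
Squaring and inserting this bound concludes the proof. No step is really an obstacle here: the lemma is essentially an interpolation/Young repackaging of the previous one, and the only minor point is choosing the right conjugate exponents in Young so that the $L^p_n$ factor lands at the power $2$ needed to be absorbed by the coercivity estimate later.
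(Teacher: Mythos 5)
Your proposal is correct and follows essentially the same route as the paper: Lemma \ref{l:fr2-a} with $q=\ell$ (so $m=(\ell+2)/2$), Young's inequality with conjugate exponents $\frac{d+2s}{d}$ and $\frac{d+2s}{2s}$ to land the $L^p_n$ factor at power $2$, and Cauchy--Schwarz splitting $\langle v\rangle^{(\ell+2)/2}$ to get $\|f_r\|_{L^1_\ell}^{1/2}\|f_r\|_{L^1_2}^{1/2}$. No gaps.
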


\begin{proof}
Following Lemma \ref{l:fr2-a}, we have
\[ \int_{\R^d} \langle v \rangle^{\ell} f_r^2 \,\ud v \lesssim \|f_r\|_{L^p_n}^{2d/(d+2s)} \|f_r\|_{L^1_m}^{4s/(d+2s)},\]
where $m  = \ell/2+1$.

Consequently, for any $\ep>0$, there exists a constant $C$ so that
\[ \int_{\R^d} \langle v \rangle^{\ell} f_r^2 \,\ud v \leq \ep \|f_r\|_{L^p_n}^2 + C \ep^{-d/(2s)} \|f_r\|_{L^1_m}^2. \]

We use Cauchy–Schwarz inequality to estimate $\|f_r\|_{L^1_m}$ using $\|f_r\|_{L^1_\ell}$
\begin{align*}
\|f_r\|_{L^1_m} &= \int_{\R^d} \langle v \rangle^m f_r(v) \,\ud v \\
&\leq \left( \int_{\R^d} \langle v \rangle^\ell f_r(v) \,\ud v \right)^{1/2} \left( \int_{\R^d} \langle v \rangle^{2m-\ell} f_r(v) \,\ud v \right)^{1/2}.
\end{align*}
Recalling the formula for $m$ above, we observe that $2m-\ell = 2$. Therefore, we conclude
\[ \int_{\R^d} \langle v \rangle^{\ell} f_r^2 \,\ud v \leq \ep \|f_r\|_{L^p_n}^2 + C \ep^{-d/(2s)} \|f_r\|_{L^1_\ell} \|f_r\|_{L^1_2}. \]
\end{proof}

\begin{lemma} \label{l:badterm} Given any $\ep>0$, there exists a (presumably large) constant $C(\ep)$ (depending also on $M_0$ and $E_0$) so that
\begin{equation} \label{e:badterm}
\int_{\R^d} \left(f \ast |\cdot|^\gamma\right) f f_r \,\ud v \leq \ep \|f_r\|_{L^p_n}^2 + C(\ep) a^{1+\gamma_-/d} \|f_r\|_{L^1_\ell} .
\end{equation}
Here, $\gamma_- = -\gamma$ when $\gamma<0$ and $\gamma_- = 0$ when $\gamma \geq 0$.
\end{lemma}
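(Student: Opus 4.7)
The plan is to exploit the identity $f f_r = a f_r + f_r^2$ (valid on the support of $f_r$, where $f = f_b + f_r = a + f_r$) to reduce the target expression to
\[ a \int_{\R^d} (f \ast |\cdot|^\gamma) f_r \,\ud v + \int_{\R^d} (f \ast |\cdot|^\gamma) f_r^2 \,\ud v, \]
and then to treat the convolution differently depending on the sign of $\gamma$.

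When $\gamma \geq 0$ (so $\gamma_- = 0$), Lemma \ref{l:conv-f-hard} gives $f \ast |\cdot|^\gamma \ls \langle v \rangle^{\gamma} \leq \langle v \rangle^{\ell}$ pointwise. The linear term is then controlled directly by $a \|f_r\|_{L^1_\ell}$, matching $a^{1+\gamma_-/d}\|f_r\|_{L^1_\ell}$, while the quadratic term $\int \langle v\rangle^\ell f_r^2$ is exactly what Lemma \ref{l:fr2-b} is designed for. Since $\|f_r\|_{L^1_2} \ls M_0 + E_0$, this yields the $\ep \|f_r\|_{L^p_n}^2 + C(\ep)\|f_r\|_{L^1_\ell}$ splitting, which is absorbed into the target.

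When $\gamma < 0$ (so $\gamma_- = -\gamma$), I would split the convolution as $f \ast |\cdot|^\gamma = f_b \ast |\cdot|^\gamma + f_r \ast |\cdot|^\gamma$. Lemma \ref{l:conv-fb} provides the crucial bound $f_b \ast |\cdot|^\gamma \ls a^{-\gamma/d} = a^{\gamma_-/d}$, so the $f_b$-piece produces $a \cdot a^{\gamma_-/d} \|f_r\|_{L^1} \leq a^{1+\gamma_-/d}\|f_r\|_{L^1_\ell}$ from the linear term, and $a^{\gamma_-/d}\!\int f_r^2$ from the quadratic one; the latter is handled by applying Lemma \ref{l:fr2-a} with $q=0$, followed by Young's inequality (with $\ep$ chosen to absorb the $a^{\gamma_-/d}$ factor into $\ep\|f_r\|_{L^p_n}^2$). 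For the $f_r$-piece, Lemma \ref{l:conv-fr} yields $f_r \ast |\cdot|^\gamma \ls \|f_r\|_{L^p_n}^{\alpha}\|f_r\|_{L^1}^{1-\alpha}\langle v \rangle^m + \|f_r\|_{L^1}$ with $\alpha = -2\gamma/(d+2s) \in (0,2)$ (using the moderately soft assumption $\gamma + 2s > 0$). Pairing this with $a f_r$ and with $f_r^2$ produces mixed expressions that, after using $\|f_r\|_{L^1} \leq M_0$ and $\|f_r\|_{L^1_m} \leq \|f_r\|_{L^1_\ell}$ (since $m \leq 2s\ell/(d+2s) \leq \ell$) and Lemma \ref{l:fr2-a} (for the quadratic piece), reduce to a single expression of the form $(\text{const})\cdot a^{?}\|f_r\|_{L^p_n}^{\alpha}\cdot(\text{weighted }L^1\text{ norms})$. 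A final application of Young's inequality with exponents $2/\alpha$ and $2/(2-\alpha)$ splits this into $\ep\|f_r\|_{L^p_n}^2$ plus a linear contribution in $\|f_r\|_{L^1_\ell}$.

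The main obstacle is the precise accounting of the power of $a$ in the soft potential case, especially for the cubic-in-$f_r$ term $\int(f_r\ast|\cdot|^\gamma)f_r^2$. The various exponents coming out of Lemmas \ref{l:conv-fr} and \ref{l:fr2-a} (in particular, $\alpha$, $m$, and $\ell$) must be combined so that the final power of $a$ never exceeds $1+\gamma_-/d$. The key structural feature that makes this work is that $\gamma + 2s > 0$ keeps $\alpha$ strictly below $2$, which ensures Young's inequality can shunt the full $\|f_r\|_{L^p_n}^{\alpha}$ contribution into the dissipation term, while the residual $a$-power from the remainder is bounded by the hydrodynamic constants and the target exponent $a^{1+\gamma_-/d}$.
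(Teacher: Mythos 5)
Your plan follows the paper's proof essentially step for step: the reduction $f=a+f_r$ on $\supp f_r$, the case split in $\gamma$, Lemma \ref{l:conv-f-hard} plus Lemma \ref{l:fr2-b} for $\gamma\geq 0$, and the decomposition $f\ast|\cdot|^\gamma=f_b\ast|\cdot|^\gamma+f_r\ast|\cdot|^\gamma$ with Lemmas \ref{l:conv-fb}, \ref{l:conv-fr}, \ref{l:fr2-a} and Young's inequality for $\gamma<0$. The hard-potential case as you describe it is complete.

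In the soft-potential case, however, you have described a strategy rather than executed it, and the part you defer --- ``the precise accounting of the power of $a$'' --- is exactly the substance of the paper's argument (its six terms (i)--(vi)), so as written there is a genuine gap. Two specific points in your sketch need repair before the accounting closes. First, for the cubic term $\int(f_r\ast|\cdot|^\gamma)f_r^2\,\ud v$, combining Lemma \ref{l:conv-fr} with Lemma \ref{l:fr2-a} (at $q=2s\ell/(d+2s)$) produces the $L^p_n$ exponent $\alpha+2d/(d+2s)=2(d-\gamma)/(d+2s)$, not $\alpha$; it is still strictly below $2$ precisely because $\gamma+2s>0$, so Young's inequality applies, but your ``single expression of the form $a^{?}\|f_r\|_{L^p_n}^{\alpha}\cdots$'' is not what actually comes out (note also that $\alpha<2$ itself only needs $\gamma>-d-2s$; the moderate-softness hypothesis is binding for this cubic exponent and for the residual $a$-powers, not for $\alpha$). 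Second, after Young's inequality the weighted $L^1$ factors appear squared (e.g.\ $\|f_r\|_{L^1_{1/2}}^2$, $\|f_r\|_{L^1_{1+\ell/2}}^2$), and you cannot pass to ``a linear contribution in $\|f_r\|_{L^1_\ell}$'' by weight monotonicity alone: that would give $\|f_r\|_{L^1_\ell}^2$, and $\|f_r\|_{L^1_\ell}$ is not controlled by the hydrodynamic bounds (one can have $\ell>2$). The paper's fix is the Cauchy--Schwarz interpolation $\|f_r\|_{L^1_{1+\ell/2}}^2\leq\|f_r\|_{L^1_2}\|f_r\|_{L^1_\ell}$, with $\|f_r\|_{L^1_2}\leq M_0+E_0$ absorbing one factor; this step is absent from your sketch. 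Finally, one still has to verify term by term that the residual $a$-powers produced by Young's inequality, such as $a^{-\gamma(d+2s)/(2sd)}$ and $a^{(d+2s)/(d+\gamma+2s)}$, do not exceed $1-\gamma/d$ --- this is where $\gamma+2s>0$ enters again, as you correctly anticipated, but the verification must actually be carried out.
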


\begin{proof}
We divide the proof into two cases depending on whether $\gamma \geq 0$ or $\gamma < 0$.

If $\gamma \geq 0$, we apply Lemma \ref{l:conv-f-hard} to get $f \ast |\cdot|^\gamma \lesssim \langle v \rangle^\gamma$. Thus,
\begin{align*}
\int_{\R^d} \left(f \ast |\cdot|^\gamma\right) f f_r \,\ud v &\ls \int_{\R^d} \langle v \rangle^\gamma \left(a+f_r\right) f_r \,\ud v \\
&\ls a \|f_r\|_{L^1_\gamma} + \int_{\R^d} \langle v \rangle^\gamma f_r^2 \,\ud v \\
\intertext{We use that $\gamma<\ell$ and Lemma \ref{l:fr2-b} together with $\|f_r\|_{L^1_2} \leq M_0 + E_0$.}
&\leq \ep \|f\|_{L^p_n}^2 + \left(a+C(\ep)\right) \|f_r\|_{L^1_\ell}
\end{align*}
and we finish the proof in the case $\gamma \geq 0$.

When $\gamma<0$, we write $f \ast |\cdot|^\gamma = f_b \ast |\cdot|^\gamma + f_r \ast |\cdot|^\gamma$. We estimate the first term using Lemma \ref{l:conv-fb} and the second term using Lemma \ref{l:conv-fr}.
\begin{align*}
\int_{\R^d} \left(f \ast |\cdot|^\gamma\right) f f_r \,\ud v &\ls a^{-\gamma/d} \int_{\R^d} \left(a+f_r\right) f_r \,\ud v \\
&\phantom{\ls} + \|f_r\|_{L^p_n}^{-2\gamma/(d+2s)} \|f_r\|_{L^1}^{1+2\gamma/(d+2s)} \left( \int_{\R^d} \langle v \rangle^{m} \left(a+f_r\right) f_r \,\ud v \right) \\
&\phantom{\ls} + \|f_r\|_{L^1_{\ell}} \left( \int_{\R^d} \left(a+f_r\right) f_r \,\ud v \right)  \\
\intertext{We use that $m \leq 2s\ell/(d+2s)$, and Lemma \ref{l:fr2-a} with $q=2s\ell/(d+2s)$ and $q=0$.}
&\ls a^{1-\gamma/d} \|f_r\|_{L^1} + a^{-\gamma/d}  \|f_r\|_{L^p_n}^{2d/(d+2s)} \|f_r\|_{L^1_{1/2}}^{4s/(d+2s)} \\
&\phantom{\ls} + a \|f_r\|_{L^p_n}^{-2\gamma/(d+2s)} \|f_r\|_{L^1}^{1+2\gamma/(d+2s)} \|f_r\|_{L^1_m} + \|f_r\|_{L^p_n}^{2(d-\gamma)/(d+2s)} \|f_r\|_{L^1}^{1+2\gamma/(d+2s)} \|f_r\|_{L^1_{(\ell+1)/2}}^{4s/(d+2s)} \\
&\phantom{\ls} + a \|f_r\|_{L^1}^2 + \|f_r\|_{L^1} \|f_r\|_{L^p_n}^{2d/(d+2s)} \|f_r\|_{L^1_{1/2}}^{4s/(d+2s)} \\
&=: {\rm (i)} + {\rm (ii)} + {\rm (iii)} + {\rm (iv)}  + {\rm (v)} + {\rm (vi)}.
\end{align*}

We must analyze each one of the six terms. The first one is clearly bounded by the second term in \eqref{e:badterm}. For (ii), we observe that both exponents are positive and add up to two, therefore
\begin{align*}
{\rm (ii)} = a^{-\gamma/d}  \|f_r\|_{L^p_n}^{2d/(d+2s)} \|f_r\|_{L^1_{1/2}}^{4s/(d+2s)} \leq \ep \|f_r\|_{L^p_n}^2 + C(\ep) a^{\frac{-\gamma(d+2s)}{2sd}} \|f_r\|_{L^1_{1/2}}^2
\intertext{Observe that $1/2 < 1+\ell/2$, and $\frac{-\gamma(d+2s)}{2sd} = - \gamma/(2s) - \gamma/d < 1-\gamma/d$ using that $\gamma+2s > 0$.}
\leq \ep \|f_r\|_{L^p_n}^2 + C(\ep) a^{1-\gamma/d} \|f_r\|_{L^1_{1+\ell/2}}^2 \leq \ep \|f_r\|_{L^p_n}^2 + C(\ep) a^{1-\gamma/d} \|f_r\|_{L^1_2} \|f_r\|_{L^1_\ell}.
\end{align*}
In the last inequality we used $\|f_r\|_{L^1_{1+\ell/2}}^2 \leq \|f_r\|_{L^1_2} \|f_r\|_{L^1_\ell}$, which follows by Cauchy–Schwarz.

The analysis of (iii) is very similar to (ii). We use that $m < 1+\ell/2$ and that the exponents are positive numbers that add up to two. Therefore,
\begin{align*}
{\rm (iii)} &\leq \ep \|f_r\|_{L^p_n}^2 + C(\ep) a^{\frac{d+2s}{d+\gamma+2s}} \|f_r\|_{L^1_{1+\ell/2}}^2 \\
\intertext{Observe that $(d+2s)/(d+2s+\gamma) = 1 - \gamma/(d+2s+\gamma) < 1-\gamma/d$ because $\gamma+2s > 0$.}
&\leq \ep \|f_r\|_{L^p_n}^2 + C(\ep) a^{1-\gamma/d} \|f_r\|_{L^1_2} \|f_r\|_{L^1_\ell}.
\end{align*}
We used Cauchy–Schwarz for the last inequality just like in the analysis of (ii).

To analyze (iv), note that $0 < (1+\ell)/2 \leq 2$. Therefore
\[ {\rm (iv)} \leq \|f_r\|_{L^p_n}^{2(d-\gamma)/(d+2s)} \|f_r\|_{L^1_{(1+\ell)/2}}^{1 + 2(\gamma+2s)/(d+2s)} \leq \|f_r\|_{L^p_n}^{2(d-\gamma)/(d+2s)} \|f_r\|_{L^1_{(1+\ell)/2}}^{2(\gamma+2s)/(d+2s)} \|f_r\|_{L^1_2}. \]
We use that $\|f_r\|_{L^1_2} \leq M_0 + E_0$, the remaining exponents add up to two, and $(1+\ell)/2 < 1 + \ell/2$ to get
\begin{align*}
{\rm (iv)} &\leq \ep \|f_r\|_{L^p_n}^2 + C(\ep) \|f_r\|_{L^1_{1+\ell/2}}^2 \leq \ep \|f_r\|_{L^p_n}^2 + C(\ep) \|f_r\|_{L^1_2} \|f_r\|_{L^1_\ell}.
\end{align*}
For (v), we observe that ${\rm (v)} \leq a M_0 \|f_r\|_{L^1_\ell}$. Finally, the analysis for (vi) is very similar (but simpler) to that of (iv).

Recalling that $\|f\|_{L^1_2} \leq M_0 + E_0$, we conclude that every term is bounded by the right hand side of \eqref{e:badterm} provided that $C(\ep)$ and $a$ are sufficiently large depending on $d$, $\gamma$, $s$, $M_0$ and $E_0$
\end{proof}

\begin{remark}
Reading the proof of Lemma \ref{l:badterm}, there seems to be a lot of room for the computation of the exponents of $\langle v \rangle$ in the weights of the inequalities. This is not surprising given that our $L^\infty$ bound in Theorem \ref{thm:main} is not meant to capture the sharp asymptotics as $|v| \to \infty$. Indeed, following \cite{imbert-mouhot-silvestre-decay2020}, we expect $f(t,x,v) \leq a(t) \langle v \rangle^{-q}$ for some exponent $q>0$.
\end{remark}

\begin{lemma} \label{l:mdecreasing}
Let $a(t) = C\left(1+t^{-\frac{d}{2s}}\right)$ as in Theorem \ref{thm:main}, and $m(t)$ be given by the formula \eqref{e:m}.
Then $m(t)$ is monotone decreasing with respect to $t$.
\end{lemma}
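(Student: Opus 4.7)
The plan is to apply Lemma \ref{lem:energy} and show that the integrand $2\left[f_r Q(f,f) - a'(t) f_r\right]$ is pointwise non-positive in $(t,x)$, provided the constant $C$ in the definition of $a(t)$ is taken large enough. Integrating in $v$, Lemma \ref{l:Q(f,f)fr} gives the bound
\[
\int_{\R^d} Q(f,f) f_r \,\ud v \leq -c\,\|f_r\|_{L^p_n}^2 - c\,a^{1+2s/d}\|f_r\|_{L^1_\ell} + C\!\int_{\R^d}\!(f\ast|\cdot|^\gamma) f f_r \,\ud v,
\]
so I would then insert Lemma \ref{l:badterm}, choosing the small parameter $\ep$ so that the $\ep\|f_r\|_{L^p_n}^2$ piece is absorbed into $\tfrac12$ of the coercivity term. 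What remains of the bad term is $C(\ep)\, a^{1+\gamma_-/d}\|f_r\|_{L^1_\ell}$.

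The critical observation is that the assumption $\gamma+2s>0$ of Theorem \ref{thm:main} forces $\gamma_-<2s$, so $a^{1+\gamma_-/d} = a^{1+2s/d} \cdot a^{-(2s-\gamma_-)/d}$ with a strictly negative exponent on the extra factor. Since $a(t)\geq C$ uniformly, for $C$ chosen sufficiently large (depending only on the hydrodynamic constants, $d$, $s$, $\gamma$) we get $C(\ep)\,a^{-(2s-\gamma_-)/d}\leq c/2$, and thus
\[
\int_{\R^d} Q(f,f) f_r \,\ud v \leq -\tfrac{c}{2}\|f_r\|_{L^p_n}^2 - \tfrac{c}{2}\,a^{1+2s/d}\|f_r\|_{L^1_\ell}.
\]

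It remains to dominate the $-a'(t) f_r$ contribution. Since $\ell = \gamma+2s+2s/d \geq 2s/d > 0$, we have $\|f_r\|_{L^1}\leq \|f_r\|_{L^1_\ell}$, so it suffices to verify the scalar inequality $-a'(t)\leq \tfrac{c}{4}\, a(t)^{1+2s/d}$. For $a(t)=C\bigl(1+t^{-d/(2s)}\bigr)$, a direct computation gives $-a'(t)=\tfrac{Cd}{2s}\,t^{-d/(2s)-1}$, while $a(t)^{1+2s/d}\gtrsim C^{1+2s/d}\,t^{-d/(2s)-1}$ for $t\leq 1$ and $a(t)^{1+2s/d}\geq C^{1+2s/d}$ for $t\geq 1$; in both regimes the inequality holds once $C^{2s/d}$ is large compared to $d/s$ and $c$. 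Combining everything and integrating in $(x,t)\in[t_1,t_2]\times\Omega$, the right-hand side of Lemma \ref{lem:energy} becomes $\leq 0$, yielding $m(t_2)\leq m(t_1)$.

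The main obstacle is the delicate balancing of weights and of $a$-powers: one must be sure that each positive contribution (both the convolution term in $Q(f,f)$ and the drift $-a'(t)f_r$) can be swallowed by the two genuinely dissipative terms $-c\|f_r\|_{L^p_n}^2$ and $-c\,a^{1+2s/d}\|f_r\|_{L^1_\ell}$, and that the smallness threshold on $C$ depends only on the hydrodynamic parameters. The exponent $d/(2s)$ in the definition of $a(t)$ is precisely what matches the scaling $a'(t)\sim a^{1+2s/d}$, which is why this particular rate appears in the statement of Theorem \ref{thm:main}.
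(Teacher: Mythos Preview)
Your proposal is correct and follows essentially the same route as the paper: combine Lemma~\ref{lem:energy} with Lemmas~\ref{l:Q(f,f)fr} and~\ref{l:badterm}, use $\gamma_-<2s$ so that $a^{1+\gamma_-/d}$ is dominated by $a^{1+2s/d}$ once $a\geq C$ is large, and verify the scalar inequality $-a'\lesssim C^{-2s/d}a^{1+2s/d}$ to absorb the $-a'(t)\|f_r\|_{L^1}$ contribution into the $-c\,a^{1+2s/d}\|f_r\|_{L^1_\ell}$ term. The only thing the paper adds that you omit is the remark (via Corollary~\ref{cor:cadlag}) that $m$ has a c\`adl\`ag representative, so that the pointwise monotonicity statement is meaningful for all $t$ rather than merely almost every $t$.
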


\begin{proof}
We point out that $m(t)$ is almost everywhere equal to a c\`adl\`ag function according to Corollary \ref{cor:cadlag}. This is the representative that we seek to prove that it is monotone decreasing.

From Lemma \ref{lem:energy}, we get that for almost every $t_1<t_2$,
\begin{align*}
m(t_2)-m(t_1) &\leq 2\iiint_{(t_1,t_2)\times\Omega\times\R^d} \left\{\fr\, \Qff - a'(t) \fr \right\}\, \ud v \ud x \ud t \\
\intertext{Using Lemmas \ref{l:Q(f,f)fr}, \ref{l:badterm}, and the fact that $-a' \leq C^{-2s/d} a^{1+2s/d}$, then for $C$ large enough, we also have $a(t) \geq C$ and }
&\ls   \iint_{(t_1,t_2)\times\Omega} \left\{ -c\,  \|f_r\|_{L^p_n}^2 - c\, a^{1+2s/d} \|f_r\|_{L^1_\ell} \right\}\, \ud x \ud t \leq 0.
\end{align*}
\end{proof}

\begin{proof}[Proof of Theorem~\ref{thm:main}]
We intend to prove $m(t) \equiv 0$, which implies $\nm{f(t)}_{L^\infty_{x,v}}\leq a(t)$ for all $t>0$.

Note that $f\in L^2_{t,x}N_v^{s,\gamma}$ implies $f(t)\in L^2_{x,v}$ for a.e. $t\in\R_+$. Hence, for any $\delta>0$, there exists $t_0\in(0,\delta)$ such that $f(t_0)\in L^2_{x,v}$. For any $t_1\in(0,t_0)$, denote
\[ m_{t_1}(t):=\iint_{\Omega\times\R^d}\left(f(t,x,v)-a(t-t_1)\right)_+^2\,\ud v\ud x.\]
$m_{t_1}(t)$ can be regarded as a shifted version of $m(t)$ which starts from $t_1$ instead of $0$. Lemma \ref{l:mdecreasing} applies to $m_{t_1}$ just as well, so we deduce that $m_{t_1}(t)$ is monotone decreasing for $t \in (t_1,\infty)$.

Since $\lim_{t\rt0}a(t)=\infty$, we have
\[\lim_{t_1\rt t_0}m_{t_1}(t_0)=0.\]
Hence, for any $\ep>0$, there exists $t_1\in(0,t_0)$ such that $m_{t_1}(t_0)<\ep$. 

Note that we may choose $0<t_1<t_0$ to be regular points of $f$ in the sense of Lebesgue differentiation as a function $f : (0,T) \to L^2(\Omega \times \R^d)$. In that way, the corresponding value of $m(t_1)$ and $m(t_2)$ is given literally by \eqref{e:m} without the need of a modification of either $f$ or $m$ in sets of measure zero.

Based on the monotonicity of $a(t)$ and $m_{t_1}(t)$, we have $m_{t_0}(t)\leq m_{t_1}(t)\leq m_{t_1}(t_0)<\ep$ for all $t\in(t_0,\infty)$.
Due to the arbitrariness of $\ep>0$, we have $m_{t_0}(t)=0$ for all $t\in(t_0,\infty)$.
Finally, due to the arbitrariness of $\delta>0$, we have $m(t)=0$ for all $t\in(0,\infty)$.

The fact that $m(t) \equiv 0$ implies $f(t,x,v) \leq a(t)$ almost everywhere, which is the result of Theorem~\ref{thm:main}.
\end{proof}

\bibliographystyle{plain}
\bibliography{ubbd}
\index{Bibliography@\emph{Bibliography}}%

\end{document}